\newtheorem{theorem}{Theorem}
\newtheorem{lemma}{Lemma}
\newtheorem{corollary}{Corollary}[theorem]
\theoremstyle{remark}
\newtheorem{remark}{Remark}
\theoremstyle{definition}
\newtheorem{definition}{Definition}
\newcommand{\R}{\mathbb{R}} 
\newcommand{\C}{\mathcal{C}} 
\newcommand{\mbb}[1]{\mathbb{#1}} \DeclareMathOperator*{\argmax}{arg\,max} 
\begin{document} 
\title{Inference via Message Passing on Partially Labeled Stochastic Block Models}

\date{}
\author[1]{T. Tony Cai\footnote{The research of Tony Cai was supported in part by NSF Grants DMS-1208982 and DMS-1403708,  and NIH Grant R01 CA127334.}}
\author[1]{Tengyuan Liang}
\author[1]{Alexander Rakhlin\footnote{Alexander Rakhlin gratefully acknowledges the support of NSF
under grant CAREER DMS-0954737.}}

\affil[1]{Department of Statistics, The Wharton School, University of Pennsylvania} 

\renewcommand\Authands{ and }

\maketitle 
\begin{abstract}
	We study the community detection and recovery problem in partially-labeled stochastic block models (SBM). We develop a fast linearized message-passing algorithm to reconstruct labels for SBM (with $n$ nodes, $k$ blocks, $p,q$ intra and inter block connectivity) when $\delta$ proportion of node labels are revealed. The signal-to-noise ratio ${\sf SNR}(n,k,p,q,\delta)$ is shown to characterize the fundamental limitations of inference via local algorithms. On the one hand, when ${\sf SNR}>1$, the linearized message-passing algorithm provides the statistical inference guarantee with mis-classification rate at most $\exp(-({\sf SNR}-1)/2)$, thus interpolating smoothly between strong and weak consistency. This exponential dependence improves upon the known error rate $({\sf SNR}-1)^{-1}$ in the literature on weak recovery. On the other hand, when ${\sf SNR}<1$ (for $k=2$) and ${\sf SNR}<1/4$ (for general growing $k$), we prove that local algorithms suffer an error rate at least $\frac{1}{2} - \sqrt{\delta \cdot {\sf SNR}}$, which is only slightly better than random guess for small $\delta$. 

\end{abstract}

\section{Introduction} 
The stochastic block model (SBM) is a well-studied model that addresses the clustering phenomenon in large networks. Various phase transition phenomena and limitations for efficient algorithms have been established for this ``vanilla'' SBM  \citep{coja2010graph,decelle2011asymptotic,massoulie2014community, mossel2012stochastic,mossel2013belief,krzakala2013spectral,abbe2014exact,hajek2014achieving,abbe2015community, deshpande2015asymptotic}. However, in real network datasets, additional side information is often available. This additional information may come, for instance, in the form of a small portion of revealed labels (or, community memberships), and this paper is concerned with methods for incorporating this additional information to improve recovery of the latent community structure.  Many global algorithms studied in the literature are based on spectral analysis (with belief propagation as a further refinement) or semi-definite programming. For these methods, it appears to be difficult to incorporate such additional side information, although some success has been reported \citep{cucuringu2012eigenvector,zhang2014phase}. Incorporating the additional information within local algorithms, however, is quite natural. In this paper, we focus on local algorithms and study their fundamental limitations. Our model is a {\bf partially labeled stochastic block model} (p-SBM), where $\delta$ portion of community labels are randomly revealed.

We address the following questions: 

\medskip
\noindent {\bf Phase Boundary}~~ 
Are there different phases of behavior in terms of the recovery guarantee, and what is the phase boundary for partially labeled SBM? How does the amount of additional information $\delta$ affect the phase boundary?

\medskip
\noindent {\bf Inference Guarantee}~~ What is the optimal guarantee on the recovery results for p-SBM and how does it interpolate between weak and strong consistency known in the literature? Is there an efficient and near-optimal parallelizable algorithm? 

\medskip
\noindent {\bf Limitation for Local v.s. Global Algorithms}~~  While optimal local algorithms (belief propagation) are computationally efficient, some global algorithms may be computationally prohibitive. Is there a fundamental difference in the limits for local and global algorithms? An answer to this question gives insights on the computational and statistical trade-offs.

\subsection{Problem Formulation} 
We define p-SBM with parameter bundle $(n,k,p,q,\delta)$ as follows. Let $n$ denote the number of nodes, $k$ the number of communities, $p$ and $q$ -- the intra and inter connectivity probability, respectively. The proportion of revealed labels is denoted by $\delta$. Specifically, one observes a partially labeled graph $G(V,E)$ with $|V| = n$, generated as follows. There is a latent disjoint partition $V=\bigcup_{l=1}^k V_l$ into $k$ equal-sized groups,\footnote{The result can be generalized to the balanced case, $|V_l| \asymp n/k$, see Section \ref{sec:local-tree}.} with $|V_l| = n/k$. The partition information introduces the latent labeling $\ell(v) = l$ iff $v \in V_l$. For any two nodes $v_i, v_j, 1\leq i,j\leq n$, there is an edge between them with probability $p$ if $v_i$ and $v_j$ are in the same partition, and with probability $q$ if not. Independently for each node $v \in V$, its true labeling is revealed with probability $\delta$. Denote the set of labeled nodes $V^{\rm l}$, its revealed labels $\ell(V^{\rm l})$, and unlabeled nodes by $V^{\rm u}$ (where $V = V^{\rm l} \cup V^{\rm u}$).

Equivalently, denote by $G \in \mathbb{R}^{n \times n}$ the adjacency matrix, and let $L \in \mathbb{R}^{n \times n}$ be the structural block matrix 
$$
L_{ij} = 1_{\ell(v_i)=\ell(v_j)},
$$ 
where $L_{ij} = 1$ iff node $i,j$ share the same labeling, $L_{ij}=0$ otherwise. 
Then we have independently for $1\leq i<j \leq n$
\begin{align*}
B_{ij} & \sim  {\sf Bernoulli}(p) \quad \text{if}~L_{ij}=1, \\
B_{ij} & \sim  {\sf Bernoulli}(q) \quad \text{if}~L_{ij}=0.
\end{align*}
Given the graph $G(V,E)$ and the partially revealed labels $\ell(V^{\rm l})$, we want to recover the remaining labels efficiently and accurately. We are interested in the case when $\delta(n),p(n),q(n)$ decrease with $n$, and $k(n)$ can either grow with $n$ or stay fixed. 

\subsection{Prior Work} 
In the existing literature on SBM without side information, there are two major criteria -- weak and strong consistency. Weak consistency asks for recovery better than random guessing in a sparse random graph regime ($p \asymp q \asymp 1/n$), and strong consistency requires exact recovery for each node above the connectedness theshold ($p \asymp q \asymp \log n/n $). Interesting phase transition phenomena in weak consistency for SBM have been discovered in \citep{decelle2011asymptotic} via insightful cavity method from statistical physics.  Sharp phase transitions for weak consistency have  been thoroughly investigated in \citep{coja2010graph,mossel2012stochastic,mossel2013belief,mossel2013proof,massoulie2014community}. In particular, spectral algorithms on the non-backtracking matrix have been studied in \citep{massoulie2014community} and the non-backtracking walk in \citep{mossel2013proof}. 
Spectral algorithms as initialization and belief propagation as further refinement to achieve optimal recovery was established in \citep{mossel2013belief}. The work of \cite{mossel2012stochastic} draws a connection between SBM thresholds and broadcasting tree reconstruction thresholds through the observation that sparse random graphs are locally tree-like. Recent work of \cite{abbe2015detection} establishes the positive detectability result down to the Kesten-Stigum bound for all $k$ via a detailed analysis of a modified version of belief propagation.
For strong consistency, \citep{abbe2014exact,hajek2014achieving,hajek2015achieving} established the phase transition using information theoretic tools and semi-definite programming (SDP) techniques. In the statistical literature, \cite{zhang2015minimax,gao2015achieving} investigated the mis-classification rate of the standard SBM.

\citep{kanade2014global} is one of the few papers that theoretically studied the partially labeled SBM. The authors investigated the stochastic block model where the labels for a vanishing fraction ($\delta \rightarrow 0$) of the nodes are revealed. The results focus on the asymptotic case when $\delta$ is sufficiently small and block number $k$ is sufficiently large, with no specified growth rate dependence. \cite{kanade2014global} show that pushing below the Kesten-Stigum bound is possible in this setting, connecting to a similar phenomenon in $k$-label broadcasting processes \citep{mossel2001reconstruction}. In contrast to these works, the focus of our study is as follows. Given a certain parameter bundle $\text{p-SBM}(n,k,p,q,\delta)$, we investigate the recovery thresholds as the fraction of labeled nodes changes, and determine the fraction of nodes that local algorithms can recover. 

The focus of this paper is on local algorithms. These methods, naturally suited for distributed computation \citep{linial1992locality}, provide efficient (sub-linear time) solutions to computationally hard combinatorial optimization problems on graphs. For some of these problems, they are good approximations to global algorithms. We refer to  \citep{kleinberg2000small} on the shortest path problem for small-world random graphs, \citep{gamarnik2014limits} for the maximum independent set problem for sparse random graphs, \citep{parnas2007approximating} on the minimum vertex cover problem, as well as \citep{nguyen2008constant}.

Finally, let us briefly review the literature on broadcasting processes on trees, from which we borrow technical tools to study p-SBM. Consider a Markov chain on an infinite tree rooted at $\rho$ with branching number $b$. Given the label of the root $\ell(\rho)$, each vertex chooses its label by applying the Markov rule $M$ to its parent's label, recursively and independently. The process is called broadcasting process on trees. One is interested in reconstructing the root label $\ell(\rho)$ given all the $n$-th level leaf labels.
Sharp reconstruction thresholds for the broadcasting process on general trees for the symmetric Ising model setting (each node's label is $\{+,-\}$) have been studied in \citep{evans2000broadcasting}. \cite{mossel2003information} studied a general Markov channel on trees that subsumes $k$-state Potts model and symmetric Ising model as special cases; the authors  established non-census-solvability below the Kesten-Stigum bound. \cite{janson2004robust} extended the sharp threshold to robust reconstruction cases, where the vertex' labels are contaminated with noise. 
In general, transition thresholds proved in the above literature correspond to the Kesten-Stigum bound $b |\lambda_2(M)|^2  = 1$ \citep{kesten1966limit,kesten1966additional}. We remark that for a  general Markov channel $M$, $b |\lambda_2(M)|^2  < 1$ does not always imply non-solvability --- even though it indeed implies non-census-solvability \citep{mossel2003information} --- which is equivalent to the extremality of free-boundary Gibbs measure. The non-solvability of the tree reconstruction problem below the Kesten-Stigum bound for a general Markov transition matrix $M\in \mathbb{R}^{k \times k}$ still remains open, especially for large $k$.

\subsection{Our Contributions}

This section summarizes the results. In terms of methodology, we propose a new efficient linearized message-passing Algorithm~\ref{alg:amp-psbm} that solves the label recovery problem of p-SBM in near-linear runtime. The algorithm shares the same transition boundary as the optimal local algorithm (belief propagation) and takes on a simple form of a weighted majority vote (with the weights depending on graph distance). This voting strategy is easy to implement (see Section~\ref{sec:num.study}). On the theoretical front, our focus is on establishing recovery guarantees  according to the size of the Signal-to-Noise Ratio ($\sf SNR$), defined as
\begin{align}
	\label{eq:key}
	{\sf SNR}(n,k,p,q,\delta):=(1-\delta) \frac{n(p-q)^2}{k^2(q + \frac{p-q}{k})}.
\end{align}

\medskip
 \noindent {\bf Phase Boundary}~~ For $k=2$, the phase boundary for recovery guarantee is  $${\sf SNR} = 1.$$ Above the threshold, the problem can be solved efficiently. Below the threshold, the problem is intrinsically hard. For growing $k$, on the one hand, a linearized message-passing algorithm succeeds when $${\sf SNR}  > 1,$$ matching the well-established Kesten-Stigum bound for all $k$. On the other hand, no local algorithms work significantly better than random guessing if $${\sf SNR}  < \frac{1}{4}.$$ 
	
\medskip
\noindent {\bf Inference Guarantee}~~ Above the ${\sf SNR}$ phase boundary, Algorithm~\ref{alg:amp-psbm}, a fast linearized message-passing algorithm $\hat{A}$ (with near-linear run-time $\mathcal{O}^*(n)$) provides near optimal recovery. For $k=2$, under the regime ${\sf SNR} > 1$, the proportion of mis-classified labels is at most $$\sup_{l \in \{+,-\}}~ \mbb{P}_l (\hat{A} \neq l) \leq \exp\left(-\frac{ {\sf SNR}-1 }{2 + o(1)}\right) \wedge \frac{1}{2}.$$ Thus when ${\sf SNR} \in (1, 2\log n)$, the recovery guarantee smoothly interpolates between weak and strong consistency. 
	On the other hand, below the boundary ${\sf SNR} < 1$, all local algorithms suffer the minimax classification error at least
	$$\inf_{\Phi} ~\sup_{l \in \{+,-\}}~ \mbb{P}_l (\Phi \neq l)  \geq \frac{1}{2} - \mathcal{O}\left(\sqrt{\frac{\delta}{1-\delta} \cdot \frac{{\sf SNR}}{1-{\sf SNR}}}\right).$$
	For growing $k$, above the phase boundary ${\sf SNR}>1$, the proportion of mis-classified labels is at most $$\sup_{l \in [k]}~ \mbb{P}_l (\hat{A} \neq l) \leq (k-1)\cdot \exp\left(-\frac{{\sf SNR} -1}{2+ o(1)}\right) \wedge \frac{k-1}{k}$$ via the approximate message-passing algorithm. However, below the boundary ${\sf SNR}<1/4$, the minimax classification error is lower bounded by
	$$
	\inf_{\Phi} ~\sup_{l \in [k]}~ \mbb{P}_l (\Phi \neq l)  \geq \frac{1}{2} - \mathcal{O}\left( \frac{\delta}{1-\delta}  \cdot \frac{ {\sf SNR} }{ 1 - 4 \cdot {\sf SNR} }  \vee \frac{1}{k} \right).
	$$

\medskip
\noindent {\bf Limitations of Local v.s. Global Algorithms}~~ It is known that the statistical boundary (limitation for global and possibly exponential time algorithms) for growing number of communities is ${\sf SNR} \asymp \mathcal{O}(\frac{\log k}{k})$ (\cite{abbe2015detection}, weak consistency) and ${\sf SNR} \asymp \mathcal{O}(\frac{\log n}{k})$ (\cite{chen2014statistical}, strong consistency). We show in this paper that the limitation for local algorithms (those that use neighborhood information up to depth $\log n$) is
	$$
	\frac{1}{4} \leq {\sf SNR} \leq 1 .
	$$
	In conclusion, as $k$ grows, \emph{there is a factor $k$ gap between the boundaries for global and local algorithms}. Local algorithms can be evaluated in near line time; however, the global algorithm achieving the statistical boundary requires exponential time.

\medskip

To put our results in the right context, let us make comparisons with the known literature. Most of the literature studies the standard SBM with no side labeling information. Here, many algorithms that achieve the sharp phase boundary are either global algorithms, or a combination of global and local algorithms, see \citep{mossel2013proof,massoulie2014community,hajek2014achieving,abbe2014exact}. However, from the theoretical perspective, it is not clear how to distinguish the limitation for global v.s. local algorithms through the above studies. In addition, from the model and algorithmic perspective, many global algorithms such as spectral \citep{coja2010graph,massoulie2014community} and semi-definite programming \citep{abbe2014exact,hajek2014achieving} are not readily applicable in a principled way when there is partially revealed labels. 

We try to resolve the above concerns. First, we establish a detailed statistical inference guarantee for label recovery.
Allowing for a vanishing $\delta$ amount of randomly revealed labels, we show that a fast local algorithm enjoys a good recovery guarantee that interpolates between weak and strong recovery precisely, down to the well-known Kesten-Stigum bound, for general $k$. The error bound $\exp(-({\sf SNR}-1)/2)$ proved in this paper improves upon the best known result of $({\sf SNR}-1)^{-1}$ in the weak recovery literature. We also prove that the limitation for local algorithms matches the Kesten-Stigum bound, which is sub-optimal compared to the limitation for global algorithms, when $k$ grows. We also remark that the boundary we establish matches the best known result for the standard SBM when we plug in $\delta=0$.   

We study the message-passing algorithms for multi-label broadcasting tree when a fraction of nodes' labels have been revealed. Unlike the usual asymptotic results for belief propagation and approximate message-passing, we prove \emph{non-asymptotic concentration of measure phenomenon} for messages on multi-label broadcasting trees. As the tree structure encodes detailed dependence among random variables, proving the concentration phenomenon requires new ideas. We further provide a lower bound on belief propagation for multi-label broadcasting trees.

\subsection{Organization of the Paper}

The rest of the paper is organized as follows. Section~\ref{sec:prelim} reviews the preliminary background and theoretical tools -- broadcasting trees -- that will be employed to solve the p-SBM problem. To better illustrate the main idea behind the theoretical analysis, we split the main result into two sections: Section~\ref{sec:k=2} resolves the recovery transition boundary for $k=2$, where the analysis is simple and best illustrates the main idea. In Section~\ref{sec:g-k}, we focus on the  growing $k = k(n)$ case, where a modified algorithm and a more detailed analysis are provided. In the growing $k$ case, we establish a distinct gap in phase boundaries between the global algorithms and local algorithms. 

\section{Preliminaries}
\label{sec:prelim}
 
\subsection{Broadcasting Trees}
First, we introduce the notation for the tree broadcasting process. Let $T_{\leq t}(\rho)$ denote the tree up to depth $t$ with root $\rho$. The collection of revealed labels for a broadcasting tree $T_{\leq t}(\rho)$ is denoted as $\ell_{T_{\leq t}(\rho)}$ (this is a collection of random variables). The labels for the binary broadcasting tree are $[2]:=\{+,-\}$ and for $k$-broadcasting tree $[k]:= \{ 1,2,\ldots,k \}$. For a node $v$, the set of labeled children is denoted by $\C^{\rm l}(v)$ and unlabeled ones by $\C^{\rm u}(v)$. We also denote the depth-$t$ children of $v$ to be $\C_{t}(v)$. For a broadcasting tree $T$, denote by $d$ its broadcasting number, whose rigorous definition is given in \citep{evans2000broadcasting,lyons2005probability}. For a broadcasting tree with bias parameter $\theta$, the labels are broadcasted in the following way: conditionally on the label of $v$,
\begin{equation*}
	\ell(u) = \left\{ \begin{array}{cl}
	\ell(v) & \text{w.p.}~~ \theta + \frac{1-\theta}{k} \\
	l \in [k]\backslash \ell(v) & \text{w.p.} ~~ \frac{1-\theta}{k} 
	 \end{array} \right. 
\end{equation*}
for any $u\in \C(v)$. In words, the child copies the color of its parent with probability $\theta + \frac{1-\theta}{k}$, or changes to any of the remaining $k-1$ colors with equal probability $\frac{1-\theta}{k}$.
For the node $v$, $N_{\C^{\rm l}(v)}(+)$ denotes the number of revealed positive nodes among its children. Similarly, we define $N_{\C^{\rm l}(v)}(l)$ for $l \in [k]$ in multi-label trees.

\subsection{Local Tree-like Graphs \& Local Algorithms}
\label{sec:local-tree}
When viewed locally, stochastic block models share many properties with broadcasting trees. In fact, via the coupling lemma (see Lemma~\ref{lma:coupling.tree}) from \citep{mossel2012stochastic}, one can show the graph generated from the stochastic block model is locally a tree-like graph. For the rest of the paper, we abbreviate the following maximum coupling depth $\bar{t}_{n,k,p,q}$ as $\bar{t}$ (see Lemma~\ref{lma:coupling.tree} for details). 
\begin{definition}[$\bar{t}$-Local Algorithm Class for p-SBM]
	 \label{def:local.alg}
	 The $\bar{t}$-local algorithm class is the collection of decentralized algorithms that run in parallel on nodes of the graph. To recover a node $v$'s label in p-SBM, an algorithm may only utilize information (revealed labels, connectivity) of the local tree $T_{\leq \bar{t}}(v)$ rooted at $v$ with depth at most $\bar{t}$.
\end{definition}
In view of the coupling result, for the stochastic block model $\text{p-SBM}(n,k=2,p,q,\delta)$, as long as we focus on $\bar{t}$-local algorithms, we can instead study the binary-label broadcasting process $\text{Tree}_{k=2}(\theta,d,\delta)$ with broadcasting number $d = \frac{n}{2}(p+q)$ and bias parameter $\theta = \frac{p-q}{p+q}$. Similarly, for the multi-label model $\text{p-SBM}(n,k,p,q,\delta)$, we will study the $k$-label broadcasting process $\text{Tree}_k(\theta, d, \delta)$ with broadcasting number $d = n(q + \frac{p-q}{k})$ and bias parameter $\theta = \frac{p-q}{k(q + \frac{p-q}{k})}$. \footnote{In the balanced  SBM case, for each node, the local tree changes slightly with different branching number and bias parameter.} For each layer of the broadcasting tree, $\delta$ portion of nodes' labels are revealed. Our goal is to understand the condition under which message-passing algorithms on multi-label broadcasting trees  succeed in recovering the root label.

\subsection{Hyperbolic Functions and Other Notation} 
In order to introduce the belief propagation and message-passing algorithms, let us recall several hyperbolic functions that will be  used frequently. As we show, linearization of the hyperbolic function induces a new approximate message-passing algorithm. Recall that
\begin{align*}
	\tanh x = \frac{e^{x} - e^{-x}}{e^{x} + e^{-x}}, \quad \atanh x = \frac{1}{2} \log \left( \frac{1+x}{1-x} \right), 
\end{align*}
and define 
\begin{align}
	\label{eq:f_theta}
	f_{\theta}(x)& := 2 \atanh \left( \theta \tanh \frac{x}{2} \right) = \log \frac{1+\theta \cdot \frac{e^x -1 }{e^x + 1}}{1- \theta \cdot \frac{e^x -1 }{e^x + 1}}. 
\end{align}
\begin{figure}[H] 
	\centering 
	\includegraphics[width=3.5in]{./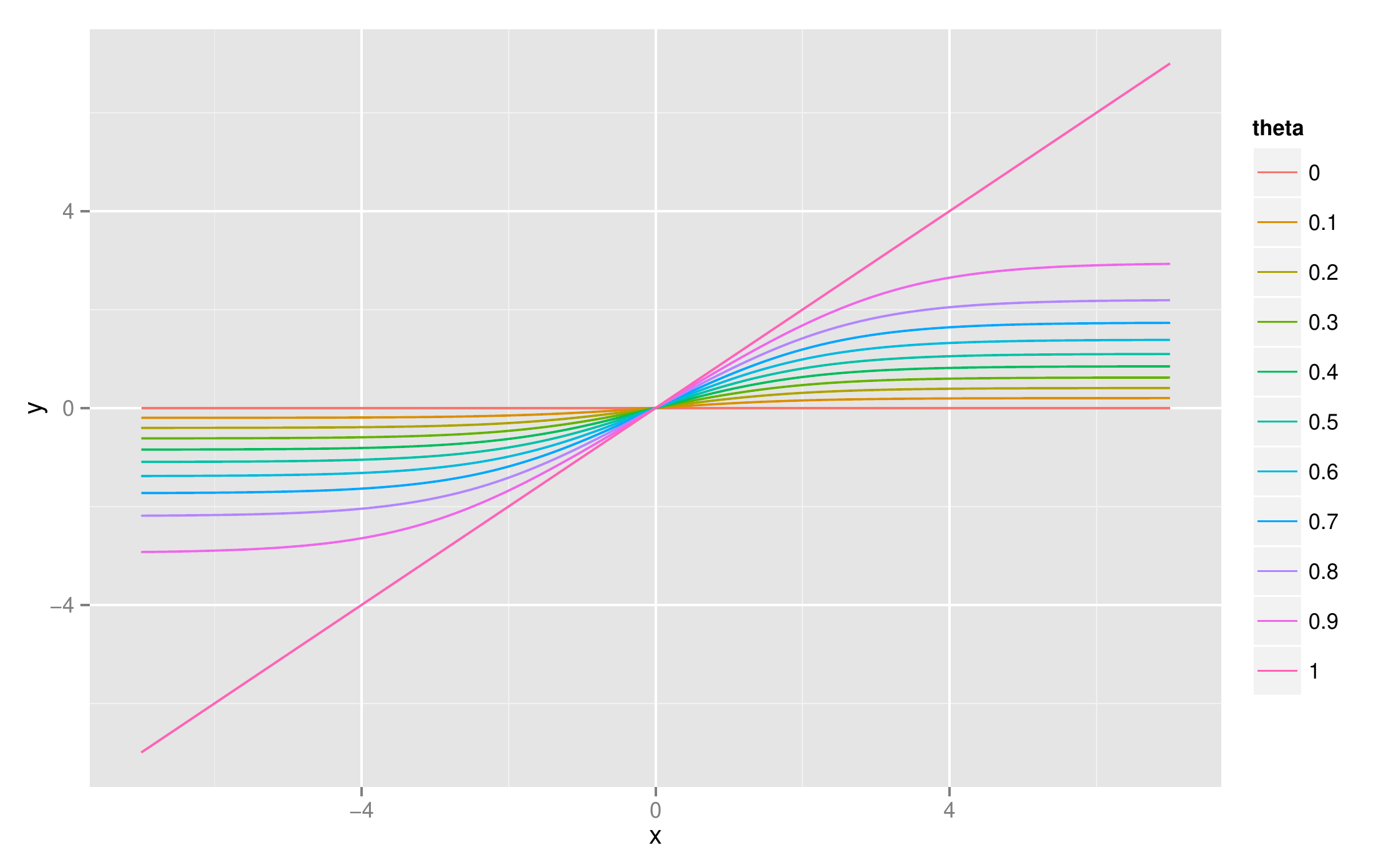} \caption{Function $f_{\theta}$ for $\theta\in[0,1]$.} 
	\label{fig:hyperbolic}
\end{figure}
The function $f_{\theta}: \R \rightarrow \R$ is a contraction with $$ |f(x) - f(y)| \leq \theta |x - y|$$ since
\begin{align*}
	\frac{d f_{\theta}(x)}{d x} = \frac{2 \theta}{(1-\theta^2) \cosh(x) + (1 + \theta^2) } \leq \theta.
\end{align*}
An illustration of $f_\theta$ is provided in Figure~\ref{fig:hyperbolic}. The recursion rule for message passing can be written succinctly using the function $f_\theta$, as we show in Section~\ref{sec:bp.amp.k=2}.

Let us collect a few remaining definitions. The moment generating function (MGF) for a random variable $X$ is denoted by  $\Psi_X(\lambda) = \mbb{E} e^{\lambda X}$, for $\lambda>0$, and the cumulant generating function is defined as $K_X(\lambda) = \log \Psi_X(\lambda)$. For asymptotic order of magnitude, we use $a(n) = \mathcal{O}(b(n))$ to mean $\forall n, a(n) \leq C b(n)$ for some universal constant $C$, and use $\mathcal{O}^*(\cdot)$ to omit the poly-logarithmic dependence. As for notation $\precsim, \succsim$: $a(n) \precsim b(n)$ if and only if $\varlimsup \limits_{n\rightarrow \infty} \frac{a(n)}{b(n)} \leq c$, with some constant $c>0$, and vice versa. The square bracket $[\cdot]$ is used to represent the index set $[k]:=[1,2,\ldots,k]$; in particular when $k=2$, $[2]:=\{+,-\}$ for convenience.

\section{Number of Communities $k=2$~: Message Passing with Partial Information}
\label{sec:k=2}

\subsection{p-SBM Transition Thresholds}
\label{sec:k=2.thresholds}

We propose a novel linearized message-passing algorithm to solve the p-SBM in near-linear time. The method employs  Algorithm~\ref{alg:AMP} and \ref{alg:k-AMP} as sub-routines, can run in parallel, and is easy to implement.

\begin{algorithm}[H] 
	\KwData{A network graph $G(V,E)$ with partial label information, where $V = V^{\rm l} \cup V^{\rm u}$ is composed of labeled set and unlabeled set. Denote $\epsilon = o(1)$ small, and $\bar{t} \precsim \frac{\log n}{\log (n(p+q))}$.} 
	\KwResult{The labeling for each node $v \in V^{\rm u}$.} 
	\For{each node $v \in V^{\rm u}$ in the unlabeled set,}{open the tree neighborhood $T_{\leq \bar{t}}(v)$ induced by the graph $G(V,E)$ \;
		\For {each node $u \in \C_{(1-\epsilon)\bar{t}}(v)$, i.e., depth $(1-\epsilon)\bar{t}$ child of $v$,}{focus on the subtree $T_{\leq \epsilon \bar{t}}(u)$,  \; initialize the message for $u$ via the labeled node $\in V^{\rm l}$ in layer $\epsilon \bar{t}$ of the subtree \footnote{as there is at least one labeled node in layer $\epsilon \bar{t}$, for all the subtrees rooted in $u$}  \;} 
	 	run message-passing Algorithm~\ref{alg:AMP} (Algorithm~\ref{alg:k-AMP} for general $k$) on the tree $T_{\leq (1-\epsilon)\bar{t}}(v)$ with initial message on layer $(1-\epsilon)\bar{t}$ \;
		output $\ell(v)$. 
		} 
	
	\caption{Message Passing for p-SBM} 
	\label{alg:amp-psbm} 
\end{algorithm}
\medskip

Now we are ready to present the main result.

\begin{theorem}[Transition Thresholds for p-SBM: $k=2$]
	\label{thm:sbm.k=2}
	Consider the partially labeled stochastic block model $G(V,E)$ and its revealed labels $\ell(V^{l})$ under the conditions (1) $np \asymp nq \precsim n^{o(1)}$ and (2) $\delta \succsim n^{-o(1)}$. For any node $\rho \in V^{u}$ and its locally tree-like neighborhood $T_{\leq \bar{t}}(\rho)$, define the maximum mis-classification error of a local estimator ~$\Phi: \ell_{T_{\leq \bar{t}}(\rho)} \rightarrow \{ +, - \}$ as
	\begin{align*}
		{\sf Err}(\Phi) := \max_{l \in \{+,- \}}~\mbb{P} \left( \Phi( \ell_{T_{\leq t}(\rho) }) \neq \ell(\rho) |  \ell(\rho) = l  \right) .
	\end{align*}
	The transition boundary for p-SBM depends on the value  
	\begin{align*}
		{\sf SNR} = (1-\delta) \frac{n(p-q)^2}{2(p+q)}.
	\end{align*} 
	($k=2$ in Eq.~\eqref{eq:key}).
	On the one hand, if
	\begin{align}
		\label{eq:upp.k=2}
		{\sf SNR} > 1,
	\end{align}
	the $\bar{t}$- local message-passing Algorithm~\ref{alg:amp-psbm} --- denoted as $\hat{A}(\ell_{T_{\leq \bar{t}}(
	\rho)})$ --- recovers the true labels of the nodes with mis-classification rate at most 
	\begin{align}
		\label{eq:percent.k=2}
	{\sf Err}(\hat{A})  \leq   \exp\left(- \frac{ {\sf SNR} - 1}{2 C + o_{\bar{t}}(1)}  \right) \wedge \frac{1}{2}, 
	\end{align}
	where $C>0$ is a constant and $C\equiv 1$ if the local tree is regular. On the other hand, when
	\begin{align}
		\label{eq:low.k=2}
		{\sf SNR} < 1,
	\end{align}
	for any $\bar{t}$-local estimator $\Phi: \ell_{T_{\leq \bar{t}}(\rho)} \rightarrow \{ +, - \}$, the minimax mis-classification error is lower bounded as
	$$
	\inf_{\Phi}~~  {\sf Err}(\Phi) \geq \frac{1}{2} - C \cdot \sqrt{\frac{\delta}{1-\delta} \cdot \frac{{\sf SNR}}{1 - {\sf SNR}} \cdot \frac{(p+q)^2}{pq}}  = \frac{1}{2} - C' \cdot \sqrt{\frac{\delta}{1-\delta} \cdot \frac{{\sf SNR}}{1 - {\sf SNR}}}.
	$$
\end{theorem}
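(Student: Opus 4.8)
The plan is to leverage the coupling lemma to reduce the p-SBM problem to the binary broadcasting tree $\text{Tree}_{k=2}(\theta, d, \delta)$ with $d = \tfrac{n}{2}(p+q)$ and $\theta = \tfrac{p-q}{p+q}$, so that ${\sf SNR} = (1-\delta)\, d\,\theta^2$, and then split into the two regimes.

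\emph{Upper bound ($\,{\sf SNR} > 1$).} First I would write down the belief-propagation recursion on the tree in log-likelihood-ratio form: the message a node sends is the posterior LLR for its label given the subtree below it, and the recursion combines children's messages through the contraction $f_\theta$ of Eq.~\eqref{eq:f_theta}, with revealed labels entering as messages of magnitude $\pm\atanh\theta$ (effectively $\pm\infty$ at the leaves of the $\epsilon\bar t$ subtrees where Algorithm~\ref{alg:amp-psbm} initializes). The linearized message-passing algorithm replaces $f_\theta(x)\approx \theta x$ near the origin, turning the recursion into a distance-weighted majority vote with weights $\theta^{(\text{depth})}$. I would then analyze the sign of the aggregated statistic at the root via a moment/MGF argument: conditionally on $\ell(\rho)=l$, the statistic is a sum over the tree of roughly independent contributions; its conditional mean is of order $(1-\delta)\sum_{s} (d\theta^2)^s \asymp ({\sf SNR})^{\bar t}$-type growth and its variance is controlled by the same branching-number bookkeeping (here the constant $C$ and the condition ``$C\equiv 1$ for regular trees'' enter, reflecting the ratio between the branching number $d$ and worst-case degree growth). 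A Chernoff bound on the event that the statistic has the wrong sign, optimizing the exponent, yields $\exp(-({\sf SNR}-1)/(2C+o_{\bar t}(1)))$. The key technical input is the \emph{non-asymptotic concentration of messages} advertised in the contributions section; I expect establishing this — controlling the MGF of the root statistic uniformly over the random tree, given the intricate dependence encoded by the tree — to be the main obstacle, handled by a layer-by-layer induction on conditional cumulant generating functions using the contraction property of $f_\theta$ and the $\epsilon\bar t$ ``clean initialization'' trick to guarantee a labeled node in every deep subtree.

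\emph{Lower bound ($\,{\sf SNR} < 1$).} Here the goal is an information-theoretic impossibility bound: no function $\Phi$ of $\ell_{T_{\leq\bar t}(\rho)}$ beats $\tfrac12 - C'\sqrt{\tfrac{\delta}{1-\delta}\cdot\tfrac{{\sf SNR}}{1-{\sf SNR}}}$. I would bound the minimax error below by $\tfrac12(1 - \mathrm{TV}(\mbb P_+, \mbb P_-))$ where $\mbb P_\pm$ are the laws of the observed tree given root label $\pm$, and then bound total variation by (half the) chi-square or Hellinger distance. The natural route is a second-moment / $\chi^2$ computation: expand $\chi^2(\mbb P_+\|\tfrac12(\mbb P_++\mbb P_-))$ as a sum over the tree of correlation terms, each depleted by a factor $\theta^2$ per edge and by $\delta$ for the ``informative'' revealed-label events, giving a geometric series in $d\theta^2$ (i.e.\ in ${\sf SNR}/(1-\delta)$) that converges precisely when ${\sf SNR}<1$, with value $\asymp \tfrac{\delta}{1-\delta}\cdot\tfrac{{\sf SNR}}{1-{\sf SNR}}$; the extra $\tfrac{(p+q)^2}{pq}$ factor appears from writing the per-edge Bernoulli mutual-information-type quantity exactly rather than via $\theta$, and is absorbed into $C'$ under $np\asymp nq$. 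The subtlety is that with $\delta=0$ the series need not contract below ${\sf SNR}=1$ in the non-reconstruction-threshold sense for general channels, so one must use the revealed labels as the source of the depletion factor $\delta$; getting the geometric series to telescope cleanly while accounting for the branching-number irregularity of the local tree is the delicate bookkeeping step, again with a constant $C$ in front that is $1$ in the regular case.
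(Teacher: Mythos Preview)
Your upper-bound sketch is essentially the paper's approach: linearize $f_\theta$ to get the weighted-majority statistic, then run a layer-by-layer induction on the conditional MGF using Hoeffding's lemma to obtain sub-Gaussian parameters $(\mu_t,\sigma_t^2)$ satisfying the recursion $\mu_t=\mu_1+\alpha\mu_{t-1}$, $\sigma_t^2=\sigma_1^2+\alpha\sigma_{t-1}^2+\alpha\mu_{t-1}^2$ with $\alpha=(1-\delta)\theta^2 d={\sf SNR}$, and conclude by Chernoff. So far, same route.

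For the lower bound your plan diverges from the paper in a way that would need repair. The paper does \emph{not} expand $\chi^2$ as a sum of per-node correlation terms; that heuristic is dangerous here because $\chi^2$ of a product of independent pieces \emph{multiplies} (so $\log(1+\chi^2)$ sums), and the revealed labels at different depths are not independent of each other given the root. Instead the paper bounds the symmetrized $\chi^2$ distance $d_{\chi^2}^t:=\max\{d_{\chi^2}(\mu^+,\mu^-),d_{\chi^2}(\mu^-,\mu^+)\}$ \emph{recursively}: one step of the tree gives
\[
\log(1+d_{\chi^2}^t)\ \le\ \delta d\cdot\log\Bigl(1+\tfrac{4\theta^2}{1-\theta^2}\Bigr)\ +\ (1-\delta)d\cdot\log\bigl(1+\theta^2 d_{\chi^2}^{t-1}\bigr),
\]
where the factor $\theta^2$ in front of $d_{\chi^2}^{t-1}$ comes from a Jensen step applied to the mixture $\tfrac{1+\theta}{2}\mu^+ +\tfrac{1-\theta}{2}\mu^-$. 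One then finds the fixed point $c^*$ of this recursion and checks $d_{\chi^2}^1\le c^*$, which yields $d_{\chi^2}^t\le c^*\lesssim \tfrac{\delta d\cdot 4\theta^2/(1-\theta^2)}{1-(1-\delta)\theta^2 d}$, i.e.\ the claimed $\tfrac{\delta}{1-\delta}\cdot\tfrac{{\sf SNR}}{1-{\sf SNR}}$ shape. Your ``geometric series'' intuition is the linearization of this fixed-point equation, but making it rigorous requires the recursion-plus-Jensen argument, not a direct expansion.

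The part you underestimate most is the passage from the regular tree to the Galton--Watson tree coupled to p-SBM. Saying ``constant $C$ reflects branching-number irregularity'' is not enough; the paper invokes three different equivalent definitions of the branching number and uses each in a specific place. For the upper bound, the \emph{flow} definition gives $\mu_t(\rho)\asymp[(1-\delta)\theta^2 d]^t$ a.s., and the \emph{energy} (electrical-network) definition is used to show $\sigma_t^2(\rho)/\mu_t^2(\rho)\le C/(\alpha-1)$ by interpreting the normalized subtree means as a unit flow and bounding the resulting energy. For the lower bound, the \emph{cutset} definition together with a Pemantle--Peres-type lemma (existence of a cutset $C_\epsilon$ with $\sum_{x\in C_\epsilon}\lambda^{-|x|}\le\epsilon$ and the analogous bound on every sub-cutset) is needed to push the recursive $\chi^2$ inequality through an irregular tree; one linearizes $\log(1+\theta^2 x)/\theta^2\le(1+\eta)\log(1+x)$ on $[0,\eta]$ and inducts along the cutset. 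None of this machinery appears in your sketch, and it is where the constant $C$ (with $C\equiv 1$ in the regular case) actually comes from.
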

	The above lower bound in the regime $\delta = o(1)$ implies that no local algorithm using information up to depth $\bar{t}$ can do significantly better than $1/2 + \mathcal{O}(\sqrt{\delta})$, close to random guessing.

Let us compare the main result for p-SBM with the well-known result for the standard SBM with no partial label information. The boundary in Equations~\eqref{eq:upp.k=2} and \eqref{eq:low.k=2} is the phase transition boundary for the standard SBM when we plug in $\delta=0$. This also matches the well-known Kesten-Stigum bound. For the standard SBM in $k=2$ case, the Kesten-Stigum bound is proved to be sharp (even for global algorithms), see \citep{mossel2013proof, massoulie2014community}.

The interesting case is when there is a vanishing amount of revealed label information, i.e., $o(1) = \delta \succsim n^{-o(1)}$. In this case, the upper bound part of Theorem~\ref{thm:sbm.k=2} states that this vanishing amount of initial information is enough to propagate the labeling information to all the  nodes, above the same detection transition threshold as the vanilla SBM. However, the theoretical guarantee for the label propagation pushes beyond weak consistency (detection), explicitly interpolating between weak and strong consistency. The result provides a detailed understanding of the strength of the ${\sf SNR}$ threshold and its effect on percentage recovery guarantee, i.e., the inference guarantee. More concretely, for the regime $p = a/n, q = b/n$, the boundary 
$$
{\sf SNR} =   (1-\delta) \frac{n(p-q)^2}{2(p+q)} >1
$$
which is equivalent to the setting
$\frac{(a-b)^2}{2(a+b)}> \frac{1}{1-\delta}.$
When $\delta=0$, this matches the boundary for weak consistency  in  \citep{mossel2013proof, massoulie2014community}. In addition, ${\sf SNR} > 1+ 2\log n$ implies 
${\sf Err}(\hat{A}) < 1/n \rightarrow 0$, which means strong consistency (recovery) in the regular tree case ($C \equiv 1$). This condition on ${\sf SNR}$ is satisfied, for instance, by taking $p = a\log n/n, q = b\log n/n$ and 
computing the relationship between $a, b$, and $\delta$ to ensure
$$ {\sf SNR} = (1-\delta) \frac{n(p-q)^2}{2(p+q)} > 1+ 2\log n.$$
This relationship is precisely
$$ \frac{\sqrt{a} - \sqrt{b}}{\sqrt{2}} > \sqrt{\frac{1+\frac{1}{2\log n}}{1-\delta}} \cdot \frac{\sqrt{a}+\sqrt{b}}{\sqrt{2(a+b)}} \succsim \sqrt{\frac{1}{1-\delta}}.$$
The above agrees with the scaling for strong recovery in \citep{abbe2014exact,hajek2014achieving}.

\bigskip

The following sections are dedicated to proving the theorem. The upper bound is established in Corollary~\ref{col:amp.k=2} through a linearized belief propagation that serves as a subroutine for Algorithm~\ref{alg:amp-psbm}. The lower bound is established by  employing the classic Le Cam's theory, as shown in Theorem~\ref{thm:low.k=2}.

\subsection{Belief Propagation \& Message Passing}
\label{sec:bp.amp.k=2}

In this section we introduce the belief propagation (BP) Algorithm~\ref{alg:BP} and motivate the new message-passing Algorithm~\ref{alg:AMP} that, while being easier to analyze, mimics the behavior of BP. Algorithm~\ref{alg:AMP} serves as the key building block for Algorithm~\ref{alg:amp-psbm}.

Recall the definition of the partially revealed binary broadcasting tree $\text{Tree}_{k=2}(\theta,d,\delta)$ with broadcasting number $d$. The root $\rho$ is labeled $\ell(\cdot)$ with either $\{ +, - \}$ equally likely, and the label is not revealed. The labels are broadcasted along the tree with a bias parameter $0<\theta<1$: for a child $v \in \C(u)$ of $u$, $\ell(v)=\ell(u)$ with probability $\frac{1+\theta}{2}$ and $\ell(v)=-\ell(u)$ with probability $\frac{1-\theta}{2}$.
The tree is partially labeled in the sense that a fraction $0<\delta<1$ of labels are revealed for each layer and  $\ell_{T_{\leq t}(\rho)}$ stands for the revealed label information of tree rooted at $\rho$ with depth $\leq t$.

Let us formally introduce the BP algorithm, which is the Bayes optimal algorithm on trees. We define $$M_{i}(\ell_{T_{\leq i}(v)}):= \log \frac{\mbb{P}\left( \ell(v) = + | \ell_{T_{\leq i}(v)} \right)}{\mbb{P}\left( \ell(v) = - | \ell_{T_{\leq i}(v)} \right)}$$
as the belief of node $v$'s label, and we abbreviate it as $M_i$ when the context is clear. The belief depends on the revealed information $\ell_{T_{\leq i}(v)}$.
The following Algorithm~\ref{alg:BP} calculates the log ratio $M_{t}(\ell_{T_{\leq t}(\rho)})$ based on the revealed labels up to depth $t$, recursively, as shown in Figure~\ref{fig:bp}. The Algorithm is derived through Bayes' rule and simple algebra, and the detailed derivation is included in Section~\ref{pf:deri.k=2}.

\begin{figure}[pht] 
	\centering 
	\includegraphics[width=3in]{./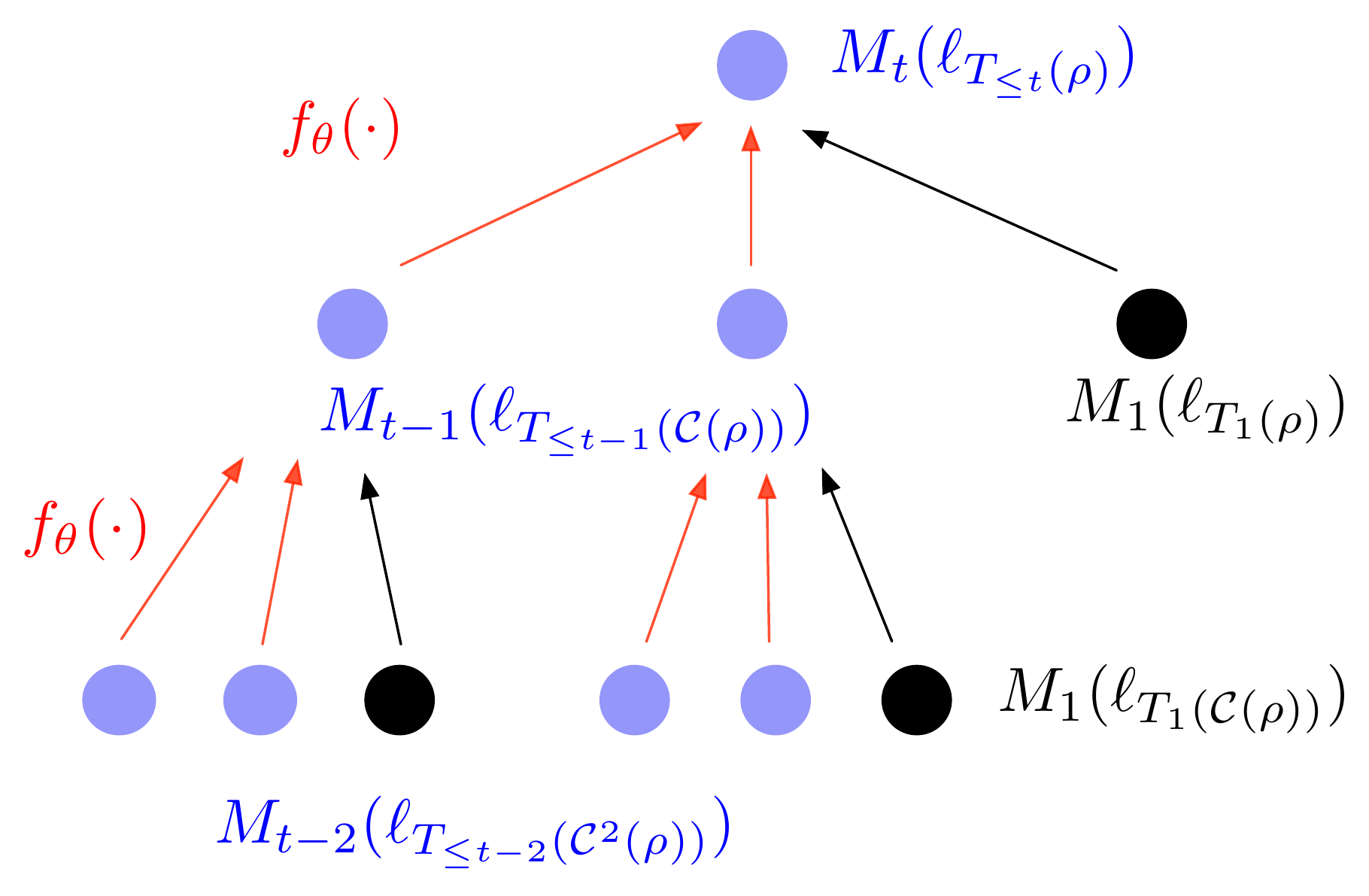} 
	\caption{Illustration of recursion  in Eq.~\eqref{eq:update-message} for messages on a $d$-regular tree. Here $d=3$ with two unlabeled children ($(1-\delta)d = 2$, denoted by blue) and one labeled child ($\delta d=1$, denoted by black),  and the depth is $2$. $\C^{t}(\rho)$ denotes depth $t$ children of the root $\rho$. The red arrows correspond to messages received from the labeled children and black arrow are from the unlabeled children. } 
	\label{fig:bp}
\end{figure}

\begin{algorithm}[H] 
	\KwData{A partially labeled tree $T_{\leq t}(\rho)$ with depth $t$, with labels $\ell_{T_{\leq t}(\rho)}$, the root label $\ell(\rho)$ is unknown.} 
	\KwResult{The logit of the posterior probability $M_{t}(\ell_{T_{\leq t}(\rho)}) = \log \frac{\mbb{P}\left( \ell(\rho) = + | \ell_{T_{\leq t}(\rho)} \right)}{\mbb{P}\left( \ell(\rho) = - | \ell_{T_{\leq t}(\rho)} \right)}$.} Initialization: $i = 1$, and $M_{0}(\ell_{T_{\leq 0}(v)}) = 0$, $M_{1}(\ell_{T_{\leq 1}(v)}) = \left(N_{\C^{\rm l}(v)}(+) - N_{\C^{\rm l}(v)}(-) \right) \log \frac{1+\theta}{1-\theta}$, $\forall v \in T_{\leq t}(\rho) $\; 
	\While{$i \leq t$}{ focus on $(t-i)$-th layer\; \For{$v \in \C_{t-i}(\rho)$ and $v$ unlabeled}{ update messages for the subtree: 
	\begin{align}
		\label{eq:update-message}
	M_{i}(\ell_{T_{\leq i}(v)}) = M_{1}(\ell_{T_1(v)}) + \sum\limits_{u \in \C^{\rm u}(v)} f_{\theta}\left(M_{i-1}(\ell_{T_{\leq i-1}(u)})\right)
	\end{align} move one layer up: $i = i+1$ \; } } 
	output $M_{t}(\ell_{T_{\leq t}(\rho)})$. 
	\caption{Belief Propagation (BP) on Partially Labeled Binary Broadcasting Tree} \label{alg:BP} 
\end{algorithm}
\medskip
The computational complexity of this algorithm is $$\mathcal{O} \left( \frac{(\delta d +1)[(1-\delta)d]^t - d}{(1-\delta)d - 1} \right).$$ While the method is Bayes optimal, the density of the messages $M_i$ is difficult to analyze, due to the dependence on revealed labels and the non-linearity of $f_{\theta}$. However, the following linearized version, Algorithm~\ref{alg:AMP}, shares many theoretical similarities with Algorithm~\ref{alg:BP}, and is easier to analyze. Both Algorithms~\ref{alg:BP},~\ref{alg:AMP} require the prior knowledge of $\theta$.\\

\begin{algorithm}[H] 
	\KwData{A partially labeled tree $T_{\leq t}(\rho)$ with depth $t$, with labels $\ell_{T_{\leq t}(\rho)}$, the root label $\ell(\rho)$ is unknown.} 
	\KwResult{Label $\ell(\rho) = \sgn(M_{t}(\ell_{T_{\leq t}(\rho)}))$.} Initialization: $i = 1$, and $M_{0}(\ell_{T_{\leq 0}(v)}) = 0$, $M_{1}(\ell_{T_{\leq 1}(v)}) = \left(N_{\C^{\rm l}(v)}(+) - N_{\C^{\rm l}(v)}(-) \right) \log \frac{1+\theta}{1-\theta}$, $\forall v \in T_{\leq t}(\rho) $\; 
	\While{$i \leq t$}{ focus on $(t-i)$-th layer\; \For{$v \in \C_{t-i}(\rho)$ and $v$ unlabeled}{ update messages for the subtree: 
	\begin{align}
		M_{i}(\ell_{T_{\leq i}(v)}) = M_{1}(\ell_{T_1(v)}) + \theta \cdot \sum\limits_{u \in \C^{\rm u}(v)} M_{i-1}(\ell_{T_{\leq i-1}(u)})
	\end{align}
	move one layer up: $i = i+1$ \; } } output $\ell(\rho) = \sgn(M_{t}(\ell_{T_{\leq t}(\rho)}))$. \caption{Approximate Message Passing (AMP) on Partially Labeled Binary Broadcasting Tree} \label{alg:AMP} 
\end{algorithm}
\medskip

 Algorithm~\ref{alg:AMP} can also be viewed as a weight-adjusted majority vote algorithm. We will prove in the next two sections that BP and AMP achieve the same transition boundary in the following sense. Above a certain threshold, the AMP algorithm succeeds, which implies that the optimal BP algorithm will also work. Below the same threshold, even the optimal BP algorithm will fail, and so does the AMP algorithm. 


\subsection{Concentration Phenomenon on Messages} 

We now prove Theorem~\ref{thm:AMP.k=2}, which shows the concentration of measure phenomenon for messages defined on the broadcasting tree. We focus on a simpler case of regular local trees, and the result will be generalized to Galton-Watson trees with a matching branching number. 

We state the result under a stronger condition $\delta d  = O(1)$. In the case when $\delta d = o(1)$, a separate trick, described in Remark~\ref{rem:sparse} below, of aggregating the $\delta$ information in a subtree will work.
\begin{theorem}[Concentration of Messages for AMP] 
	\label{thm:AMP.k=2} 
	Consider the Approximate Message Passing (AMP) Algorithm~\ref{alg:AMP} on the binary-label broadcasting tree $\text{Tree}_{k=2}(\theta,d,\delta)$. Assume $\delta d = O(1)$. Define parameters $\{\mu_t$, $\sigma_t^2\}_{t \geq 0}$ as 
	\begin{align}
		\label{eq: mean-var-update}
		\mu_t & = \mu_1 + \alpha \cdot \mu_{t-1}, \\
		\sigma_t^2 & = \sigma_1^2 + \alpha \cdot \sigma_{t-1}^2 + \alpha \cdot \mu_{t-1}^2,
	\end{align}
	with the initialization 
	\begin{align*}
		\mu_1 & = \theta \delta d \cdot \log \frac{1+\theta}{1-\theta}, \quad \sigma_1^2 = \delta d \cdot \log^2\frac{1+\theta}{1-\theta}, \\
		\alpha & : = (1-\delta)\theta^2 d.
	\end{align*}
	The explicit formulas for $\mu_t$ and $\sigma^2_t$ are
	\begin{align}
		& \mu_t = \frac{\alpha^t-1}{\alpha-1} \cdot \mu_1, \\
		& \sigma_t^2 = \frac{\alpha^t-1}{\alpha-1} \cdot \sigma_1^2 + \frac{\frac{\alpha^{2t}-\alpha^{t+1} + \alpha^{t} - \alpha}{\alpha-1} - 2(t-1)\alpha^t}{(\alpha-1)^2} \cdot \mu_1^2. 
	\end{align}
	For a certain depth $t$, conditionally on $\ell(\rho) = +$, the messages in Algorithm~\ref{alg:AMP} concentrate as
	\begin{align*}
		\mu_t - x \cdot \sigma_t \leq M_t(\ell_{T_{\leq t}(\rho)}) \leq \mu_t + x \cdot \sigma_t, 
	\end{align*}
	and conditionally on $\ell(\rho) = -$, 
	\begin{align*}
		- \mu_t - x \cdot \sigma_t \leq M_t(\ell_{T_{\leq t}(\rho)}) \leq - \mu_t + x \cdot \sigma_t, 
	\end{align*}
	both with probability at least $1-2\exp(x^2/2)$. 
\end{theorem}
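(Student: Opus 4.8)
The plan is to prove the concentration bounds by an exponential (Chernoff/sub-Gaussian) moment argument that respects the recursive structure of the tree. First I would fix the conditioning $\ell(\rho)=+$ (the case $\ell(\rho)=-$ follows by the $+/-$ symmetry of the broadcasting rule) and set up the recursion for the messages. Writing $M_t = M_t(\ell_{T_{\leq t}(\rho)})$, the AMP update in Algorithm~\ref{alg:AMP} expresses $M_t$ as the sum of the initialization term $M_1(\ell_{T_1(\rho)}) = (N_{\C^{\rm l}(\rho)}(+)-N_{\C^{\rm l}(\rho)}(-))\log\frac{1+\theta}{1-\theta}$ and $\theta\sum_{u\in\C^{\rm u}(\rho)} M_{t-1}^{(u)}$, where the $M_{t-1}^{(u)}$ are i.i.d.\ copies of the depth-$(t-1)$ message on the subtree rooted at a child $u$, but crucially \emph{conditionally on $\ell(u)$}, and the $\ell(u)$ themselves depend on $\ell(\rho)=+$ through the bias $\theta$. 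So the first real step is a clean conditional decomposition: condition on the labels $\ell(\C(\rho))$ of the children, note that given these labels the subtree messages are independent, and track how the means and variances transform. This is exactly what produces the recursions $\mu_t = \mu_1 + \alpha\mu_{t-1}$ and $\sigma_t^2 = \sigma_1^2 + \alpha\sigma_{t-1}^2 + \alpha\mu_{t-1}^2$: the factor $\alpha = (1-\delta)\theta^2 d$ comes from $(1-\delta)d$ unlabeled children, each contributing $\theta^2$ (variance) and $\theta$ (mean) in the signed sense, and the extra $\alpha\mu_{t-1}^2$ term is the variance injected by the randomness of the child labels $\ell(u)$ relative to $\ell(\rho)$. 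Solving these linear recursions to get the closed forms for $\mu_t,\sigma_t^2$ is routine algebra (geometric sums and their derivatives).

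The core of the argument is then a sub-Gaussian tail bound on $M_t - \mu_t$ (and on $\mu_t - M_t$) obtained by induction on $t$. I would prove by induction the bound on the cumulant generating function
\begin{align*}
\log \mbb{E}\left[ e^{\lambda (M_t - \mu_t)} \,\middle|\, \ell(\rho)=+ \right] \;\leq\; \frac{\lambda^2 \sigma_t^2}{2}
\end{align*}
for all $\lambda$ in a suitable range (ideally all $\lambda\in\R$, so the conclusion is genuinely sub-Gaussian; the hypothesis $\delta d = O(1)$ is what keeps the light-tailed initialization under control). The base case $t=1$ is a direct computation: $N_{\C^{\rm l}(\rho)}(+)-N_{\C^{\rm l}(\rho)}(-)$ is a sum of $\delta d$ bounded i.i.d.\ terms (each in $\{+1,-1,0\}$ up to the bias shift), so its centered MGF is controlled by $\exp(\lambda^2\sigma_1^2/2)$. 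For the inductive step I would use the tower property: condition first on the child labels $\ell(\C^{\rm u}(\rho))$ and on $M_1(\ell_{T_1(\rho)})$, apply the inductive hypothesis to each subtree message $M_{t-1}^{(u)}$ (with parameter $\theta\lambda$ because of the $\theta$ scaling in the recursion), giving a factor $\exp(\frac{\theta^2\lambda^2}{2}\sigma_{t-1}^2)$ per unlabeled child and a mean contribution $\pm\theta\mu_{t-1}$ depending on whether $\ell(u)=+$ or $\ell(u)=-$; then take expectation over $\ell(\C^{\rm u}(\rho))$ and over the $M_1$ term, which are again sums of bounded independent quantities, and collect the resulting quadratic-in-$\lambda$ exponent into exactly $\frac{\lambda^2\sigma_t^2}{2}$ using the variance recursion. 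The $\alpha\mu_{t-1}^2$ term emerges here from $\mbb{E}[e^{\pm\theta\lambda\mu_{t-1}}]$ over the Bernoulli-type child-label randomness, which contributes $\cosh(\theta\lambda\mu_{t-1}) \le e^{\theta^2\lambda^2\mu_{t-1}^2/2}$. Once the CGF bound is in hand, the stated two-sided inequality with probability $1-2\exp(-x^2/2)$ is a standard Markov/optimize-over-$\lambda$ step (choosing $\lambda = x/\sigma_t$).

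The main obstacle I anticipate is handling the dependence introduced by the revealed labels correctly in the inductive step — specifically, making sure the decomposition "condition on child labels, then subtree messages are independent" is airtight given that some of those children (and grandchildren, etc.) are themselves labeled, and that the labeled-vs-unlabeled split at each level is itself random (each node revealed independently w.p.\ $\delta$). One has to be careful that conditioning on the \emph{identity} of which nodes are labeled does not bias the label distribution of the unlabeled ones, and that the $N_{\C^{\rm l}(v)}(\pm)$ counts at different levels are genuinely independent across the recursion. A secondary technical point is tracking the right $\lambda$-range so that no heavy-tailedness sneaks in — this is where $\delta d = O(1)$ is used, and where the $\delta d = o(1)$ case genuinely needs the separate subtree-aggregation trick mentioned in Remark~\ref{rem:sparse} rather than a naive adaptation. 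I would also double-check the bookkeeping in the variance recursion against the explicit closed forms as a sanity check, since the $-2(t-1)\alpha^t$ cross term in $\sigma_t^2$ is the kind of thing that is easy to get wrong by a constant.
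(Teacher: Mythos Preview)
Your proposal is correct and follows the paper's proof essentially step for step: induction on $t$ for the sub-Gaussian MGF bound, with Hoeffding's inequality for the base case and, in the inductive step, conditioning on $\ell(u)$, applying the hypothesis at scale $\theta\lambda$, then averaging over the child label. Two small notes: (i) the child-label average is $\frac{1+\theta}{2}e^{\theta\lambda\mu_{t-1}}+\frac{1-\theta}{2}e^{-\theta\lambda\mu_{t-1}}$, not $\cosh$, and the paper bounds it via Hoeffding's lemma (Lemma~\ref{lma:hoeff}) to obtain $e^{\lambda\theta^2\mu_{t-1}}e^{\theta^2\lambda^2\mu_{t-1}^2/2}$; (ii) your worries about the random revelation structure dissolve because the paper proves this theorem on the \emph{regular} tree (exactly $\delta d$ labeled and $(1-\delta)d$ unlabeled children at every node), deferring the Galton-Watson extension to the proof of Theorem~\ref{thm:sbm.k=2}.
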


Using Theorem~\ref{thm:AMP.k=2}, we establish the following positive result for approximate message-passing. 
\begin{corollary}[Recovery Proportions for AMP, $\alpha>1$] 
	\label{col:amp.k=2}
	Assume $$\alpha:=(1-\delta)\theta^2 d > 1,$$ and for any $t$ define 
	\begin{align*}
		\epsilon(t) = \frac{(\alpha-1)^2}{\theta^2\delta d} \frac{1}{\alpha^t-1} + \mathcal{O}(\alpha^{-t}), ~~ \text{with}~~ \lim_{t \rightarrow \infty} \epsilon(t) = 0. 
	\end{align*}
	Algorithm~\ref{alg:AMP} recovers the label of the root node with probability at least $$ 1 - \exp\left( - \frac{\alpha - 1}{2(1 + \epsilon(t))} \right), $$
	and its computational complexity is $$ \mathcal{O} \left( \frac{(\delta d +1)[(1-\delta)d]^t - d}{(1-\delta)d - 1} \right).$$ 
\end{corollary}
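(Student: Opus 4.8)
The plan is to derive Corollary~\ref{col:amp.k=2} as a direct consequence of Theorem~\ref{thm:AMP.k=2} together with an asymptotic analysis of the recursions for $\mu_t$ and $\sigma_t^2$. Since Algorithm~\ref{alg:AMP} outputs $\ell(\rho) = \sgn(M_t(\ell_{T_{\leq t}(\rho)}))$, conditionally on $\ell(\rho) = +$ the algorithm errs only if $M_t \leq 0$. By Theorem~\ref{thm:AMP.k=2}, with probability at least $1 - 2\exp(-x^2/2)$ we have $M_t \geq \mu_t - x\sigma_t$, so if we choose $x = \mu_t/\sigma_t$ then $M_t > 0$ on this event (we then replace the factor $2$ in front of the exponential by folding it into the lower-order term, or note the one-sided version suffices). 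Hence the misclassification probability is at most $\exp(-\mu_t^2/(2\sigma_t^2))$, and by symmetry the same bound holds conditionally on $\ell(\rho) = -$. The computational complexity claim is inherited verbatim from the complexity of Algorithm~\ref{alg:AMP}/\ref{alg:BP} as stated in the excerpt.

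The analytical core is therefore to show that $\mu_t^2/\sigma_t^2 \to \alpha - 1$ as $t \to \infty$, with the explicit rate encoded in $\epsilon(t)$, i.e. that $\mu_t^2/\sigma_t^2 = (\alpha-1)/(1+\epsilon(t))$. First I would substitute the closed-form expressions for $\mu_t$ and $\sigma_t^2$ from the theorem statement. Writing $S := \frac{\alpha^t - 1}{\alpha - 1}$, we have $\mu_t = S\mu_1$, so $\mu_t^2 = S^2 \mu_1^2$, and $\sigma_t^2 = S\sigma_1^2 + \frac{1}{(\alpha-1)^2}\bigl(\frac{\alpha^{2t} - \alpha^{t+1} + \alpha^t - \alpha}{\alpha - 1} - 2(t-1)\alpha^t\bigr)\mu_1^2$. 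The ratio $\mu_t^2/\sigma_t^2$ becomes
\begin{align*}
\frac{\mu_t^2}{\sigma_t^2} = \frac{S^2 \mu_1^2}{S\sigma_1^2 + \frac{1}{(\alpha-1)^2}\bigl(\text{quadratic-in-}\alpha^t\bigr)\mu_1^2}.
\end{align*}
The dominant term in the denominator for large $t$ is $\frac{\alpha^{2t}}{(\alpha-1)^3}\mu_1^2 = \frac{S^2}{\alpha - 1}\mu_1^2(1 + o(1))$ (since $S^2 \sim \alpha^{2t}/(\alpha-1)^2$), which gives the leading ratio $\alpha - 1$; the remaining terms — $S\sigma_1^2$, the linear-in-$\alpha^t$ pieces, the $(t-1)\alpha^t$ term, and the $S^2$-vs-$\alpha^{2t}$ correction — are all lower order by a factor $\alpha^{-t}$ or $t\alpha^{-t}$ relative to $\alpha^{2t}$. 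Collecting these gives precisely $\epsilon(t) = \frac{(\alpha-1)^2}{\theta^2\delta d}\cdot\frac{1}{\alpha^t - 1} + \mathcal{O}(\alpha^{-t})$ after using $\mu_1^2 = \theta^2\delta^2 d^2 \log^2\frac{1+\theta}{1-\theta}$ and $\sigma_1^2 = \delta d \log^2\frac{1+\theta}{1-\theta}$, so that $\sigma_1^2/\mu_1^2 = 1/(\theta^2 \delta d)$, which is where that factor in the $\epsilon(t)$ formula comes from. Since $\alpha > 1$, $\epsilon(t) \to 0$.

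The main obstacle I anticipate is purely bookkeeping: tracking which of the several error terms in the explicit $\sigma_t^2$ formula survive at order $\alpha^{-t}$ versus order $t\alpha^{-t}$ versus being genuinely negligible, and confirming the stated coefficient $\frac{(\alpha-1)^2}{\theta^2\delta d}$ on the leading $1/(\alpha^t-1)$ term exactly (rather than up to constants). There is also a minor subtlety in handling the factor $2$ from the two-sided concentration bound — one should either invoke a one-sided tail bound (which the martingale/Azuma-type argument behind Theorem~\ref{thm:AMP.k=2} surely provides) or absorb the $\log 2$ into the $\epsilon(t)$ slack, noting it does not affect the stated asymptotic form. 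Neither of these is a conceptual difficulty; the real content is entirely contained in Theorem~\ref{thm:AMP.k=2}, and this corollary is its quantitative translation into a recovery guarantee.
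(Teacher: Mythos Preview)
Your proposal is correct and matches the paper's intended derivation: the corollary is stated as an immediate consequence of Theorem~\ref{thm:AMP.k=2}, obtained exactly as you outline by applying the one-sided Chernoff bound $\mathbb{P}(M_t\leq 0\mid \ell(\rho)=+)\leq \exp(-\mu_t^2/(2\sigma_t^2))$ and then computing $\sigma_t^2/\mu_t^2$ from the explicit formulas, using $\sigma_1^2/\mu_1^2 = 1/(\theta^2\delta d)$ to extract the leading coefficient of $\epsilon(t)$. The paper does not give a separate proof, and your identification of the bookkeeping (including the $t\alpha^{-t}$ term absorbed into $\mathcal{O}(\alpha^{-t})$ and the one-sided tail removing the factor $2$) is exactly right.
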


\begin{remark}
	\label{rem:sparse}
For the sparse case $\delta d = o(1)$, we employ the following technique. Take $t_0>0$ to be the smallest integer such that $\delta [(1-\delta)d]^{t_0} > 1$. For each leaf node $v$, open a depth $t_0$ subtree rooted at $v$, with the number of labeled nodes ${\sf Poisson}(\delta [(1-\delta)d]^{t_0})$. Then we have the following parameter updating rule
	\begin{align*}
		\mu_t  = \alpha \cdot \mu_{t-1}, \quad \sigma_t^2  = \alpha \cdot \sigma_{t-1}^2 + \alpha \cdot \mu_{t-1}^2,
	\end{align*}
	with initialization 
	\begin{align*}
		\mu_1 & = \theta^{t_0}  \cdot \log \frac{1+\theta}{1-\theta}, \quad \sigma_1^2 =  \log^2\frac{1+\theta}{1-\theta}, \\
		\alpha & : = (1-\delta)\theta^2 d.
	\end{align*}
The explicit formulas for $\mu_t$ and $\sigma^2_t$ based on the above updating rules are 
	\begin{align*}
		& \mu_t = \alpha^{t-1} \cdot \mu_1,  \quad \sigma_t^2 = \alpha^{t-1} \cdot \sigma_1^2 + \frac{\alpha^{t-1}(\alpha^{t-1}-1)}{\alpha-1} \cdot \mu_1^2. 
	\end{align*}
	Corollary~\ref{col:amp.k=2} will change as follows: the value $\epsilon(t)$ is now
		\begin{align*}
			\epsilon(t) = \frac{1}{\theta^{2t_0}} \frac{1}{\alpha^{t-1}}, ~~ \text{with}~~ \lim_{t \rightarrow \infty} \epsilon(t) = 0. 
		\end{align*}
		This 
		slightly modified algorithm recovers the label of the root node with probability at least $ 1 - \exp\left( - \frac{\alpha - 1}{2(1 + \epsilon(t))} \right). $
\end{remark}

\subsection{Lower Bound for Local Algorithms: Le Cam's Method}
\label{sec:lower}
In this section we show that the $\sf SNR$ threshold in Theorem~\ref{thm:sbm.k=2} and Corollary~\ref{col:amp.k=2} is sharp for all local algorithms. The limitation for local algorithms is proved along the lines of Le Cam's method. If we can show a small upper bound on total variation distance between two tree measures $\mu_{\ell_{\leq t}(+)}, \mu_{\ell_{\leq t}(-)}$, then no algorithm utilizing the information on the tree can distinguish these two measures well. Theorem~\ref{thm:low.k=2} formalizes this idea. 

\begin{theorem}[Limits of Local Algorithms]
	\label{thm:low.k=2}
	Consider the following two measures of revealed labels defined on trees: $\mu^{+}_{\ell_{T_{\leq t}(\rho)}},  \mu^{-}_{\ell_{T_{\leq t}(\rho)}}$. Assume that $\delta d >1$, $
	(1-\delta)\theta^2d<1,
	$ and $2 \delta d \log \left( 1 + \frac{4\theta^2}{1 - \theta^2} \right)< [1-(1-\delta)\theta^2 d]^2$. Then for any $t>0$, the following bound on total variation holds
	\begin{align*}
	d_{\rm TV}^2 \left( \mu^{+}_{\ell_{T_{\leq t}(\rho)}} , \mu^{-}_{\ell_{T_{\leq t}(\rho)}} \right)  \leq   \frac{2 \delta d \log \left( 1 + \frac{4\theta^2}{1 - \theta^2} \right)}{1 - (1-\delta)\theta^2 d}.
	\end{align*}
	The above bound implies
	$$
	\inf_{\Phi} \sup_{l(\rho) \in \{+,-\}} \mbb{P} \left(\Phi(\ell_{T_{\leq t}(\rho)}) \neq \ell(\rho) \right) \geq \frac{1}{2} - C \cdot  \left\{  \frac{\delta d \log \left( 1 + \frac{4\theta^2}{1 - \theta^2} \right)}{1 - (1-\delta)\theta^2 d} \right\}^{1/2},
	$$
	where~ $\Phi: \ell_{T\leq t}(\rho) \rightarrow \{ +,- \}$ is any estimator mapping the revealed labels in the local tree to a decision, and $C>0$ is some universal constant.
\end{theorem}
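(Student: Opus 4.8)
\textbf{Proof proposal for Theorem~\ref{thm:low.k=2}.}

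The plan is to bound the total variation distance between the two tree measures $\mu^{+}_{\ell_{T_{\leq t}(\rho)}}$ and $\mu^{-}_{\ell_{T_{\leq t}(\rho)}}$ via the $\chi^2$-divergence (equivalently, the Hellinger/second-moment method), exploiting the recursive tree structure to get a clean recursion. Recall that $d_{\rm TV}^2(\mu^+,\mu^-) \leq \tfrac14 \chi^2(\mu^+ \,\|\, \bar\mu)$-type inequalities, or more directly one may use the identity $d_{\rm TV}^2 \le \tfrac14 \mathbb{E}_{\bar\mu}\big[(\tfrac{d\mu^+}{d\bar\mu} - \tfrac{d\mu^-}{d\bar\mu})^2\big]$ where $\bar\mu = \tfrac12(\mu^+ + \mu^-)$; the key quantity to control is the correlation $\mathbb{E}_{\bar\mu}\big[\tfrac{d\mu^+}{d\bar\mu}\cdot\tfrac{d\mu^-}{d\bar\mu}\big]$, i.e., a second-moment computation over the labels of the broadcasting tree. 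First I would set up the likelihood ratio $X_t := \tfrac{d\mu^+_{\ell_{T_{\le t}(\rho)}}}{d\mu^-_{\ell_{T_{\le t}(\rho)}}}$ (or the symmetrized version), observing that by the conditional independence structure of the broadcasting process, the likelihood ratio over a tree factorizes as a product over the subtrees rooted at the children, each weighted by the broadcasting kernel that flips the root label with probability $\tfrac{1-\theta}{2}$ wait — in the $k=2$ normalization, copy w.p. $\tfrac{1+\theta}{2}$.

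Second, I would derive the recursion for the relevant second moment. Writing $q_t$ for the quantity $\mathbb{E}[\text{(likelihood-ratio cross term)}]$ at depth $t$, the tree factorization plus the fact that the number of labeled and unlabeled children behave like $\delta d$ and $(1-\delta)d$ Poisson/Binomial variables gives a recursion of the shape $q_t \lesssim (\text{contribution from labeled children}) + (1-\delta)\theta^2 d \cdot q_{t-1}$, after a per-layer contraction. The per-layer ``labeled'' contribution should come out to something like $\delta d \cdot \log\!\big(1 + \tfrac{4\theta^2}{1-\theta^2}\big)$ once one computes the $\chi^2$-divergence contribution of a single revealed leaf (a single Bernoulli-type observation whose success probability differs by a $\theta$-dependent amount between the two hypotheses) and takes a logarithm via $\log(1+x) \le x$ or the reverse to linearize. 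The geometric series with ratio $\alpha = (1-\delta)\theta^2 d < 1$ then sums to $\frac{2\delta d \log(1 + 4\theta^2/(1-\theta^2))}{1 - (1-\delta)\theta^2 d}$; the condition $2\delta d \log(1 + \tfrac{4\theta^2}{1-\theta^2}) < [1-(1-\delta)\theta^2 d]^2$ is exactly what guarantees the geometric bound stays below $1$ (so the divergence bound is non-trivial) and controls the accumulation of higher-order cross terms so that the naive geometric-series bound is valid uniformly in $t$.

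Third, once the total variation bound $d_{\rm TV}^2 \le \frac{2\delta d \log(1+4\theta^2/(1-\theta^2))}{1-(1-\delta)\theta^2 d}$ is in hand, the minimax lower bound is a one-line application of Le Cam's two-point method: for any estimator $\Phi$, $\sup_{l} \mathbb{P}_l(\Phi \ne l) \ge \tfrac12(\mathbb{P}_+(\Phi\ne +) + \mathbb{P}_-(\Phi\ne -)) \ge \tfrac12(1 - d_{\rm TV}(\mu^+,\mu^-))$, which gives $\tfrac12 - \tfrac12 d_{\rm TV} \ge \tfrac12 - C\big\{\frac{\delta d \log(1+4\theta^2/(1-\theta^2))}{1-(1-\delta)\theta^2 d}\big\}^{1/2}$. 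Finally, to connect to the $\sf SNR$ statement in Theorem~\ref{thm:sbm.k=2}, substitute $\theta = \tfrac{p-q}{p+q}$, $d = \tfrac{n(p+q)}{2}$, use $\log(1 + \tfrac{4\theta^2}{1-\theta^2}) \asymp \theta^2$ for small $\theta$, and note $\delta d \theta^2 \asymp \delta \cdot \tfrac{n(p-q)^2}{2(p+q)} \asymp \tfrac{\delta}{1-\delta}{\sf SNR}$, while $1 - (1-\delta)\theta^2 d = 1 - {\sf SNR}$, yielding the $\tfrac12 - C'\sqrt{\tfrac{\delta}{1-\delta}\cdot\tfrac{\sf SNR}{1-\sf SNR}}$ form.

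\textbf{Main obstacle.} I expect the delicate step to be the second one: getting the second-moment/$\chi^2$ recursion to close cleanly. The cross term over a tree does not factor into an exact geometric series because of the interaction between labeled and unlabeled children at each level and because the one-step kernel is a mixture; one must linearize carefully (this is where the $\log(1+\tfrac{4\theta^2}{1-\theta^2})$ and the factor-of-2 come from) and show the sub-critical condition $\alpha<1$ together with the stated smallness condition suffices to dominate all the error terms uniformly in the depth $t$ — in particular to rule out a blow-up as $t\to\infty$. Handling the Galton-Watson (non-regular) case rather than the $d$-regular case adds a further layer of bookkeeping, since $d$ must be replaced by the appropriate branching number and one needs the analogous recursion in expectation over the offspring distribution.
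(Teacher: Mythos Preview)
Your proposal is correct and follows essentially the same route as the paper: bound $d_{\rm TV}$ by the $\chi^2$-divergence, derive a recursion for the $\chi^2$ over the tree depth, show it contracts with factor $(1-\delta)\theta^2 d$, and finish with Le Cam's two-point lemma. The two implementation details you left vague are resolved in the paper as follows: (i) the recursion is written for the \emph{symmetrized} quantity $d_{\chi^2}^t:=\max\{d_{\chi^2}(\mu^+,\mu^-),\,d_{\chi^2}(\mu^-,\mu^+)\}$, because applying Jensen's inequality (convexity of $1/x$) to the mixture denominator produces a convex combination of both orderings; and (ii) rather than summing a geometric series directly, one takes logarithms to get $\log(1+d_{\chi^2}^t)\le \delta d\log(1+\tfrac{4\theta^2}{1-\theta^2})+(1-\delta)d\log(1+\theta^2 d_{\chi^2}^{t-1})$ and argues via a fixed point $c^*$, which is exactly where the smallness condition $2\delta d\log(1+\tfrac{4\theta^2}{1-\theta^2})<[1-(1-\delta)\theta^2 d]^2$ enters to guarantee the fixed point exists and bounds all iterates.
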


We defer the proof of the Theorem~\ref{thm:low.k=2} to Section~\ref{sec:proof}. Theorem~\ref{thm:low.k=2} assures the optimality of Algorithm~\ref{alg:AMP}.

\section{Growing Number of Communities} 
\label{sec:g-k}

In this section, we extend the algorithmic and theoretical results to p-SBM with general $k$. There is a distinct difference between the case of large $k$  and $k=2$: there is a factor gap between the boundary achievable by local and global algorithms. 

The main Algorithm that solves p-SBM for general $k$ is still Algorithm~\ref{alg:amp-psbm}, but this time it takes Algorithm~\ref{alg:k-AMP} as a subroutine. We will first state  Theorem~\ref{thm:sbm.g-k}, which summarizes the main result.

\subsection{p-SBM Transition Thresholds}

	The transition boundary for partially labeled stochastic block model depends on the critical value ${\sf SNR}$
	defined in Equation~\eqref{eq:key}.
	
\begin{theorem}[Transition Thresholds for p-SBM: general $k$]
	\label{thm:sbm.g-k}
	Assume (1) $np \asymp nq \precsim n^{o(1)}$, (2) $\delta \succsim n^{-o(1)}$, (3) $k \precsim n^{o(1)}$, and consider the partially labeled stochastic block model $G(V,E)$ and the revealed labels $\ell(V^{l})$. For any node $\rho \in V^{u}$ and its locally tree-like neighborhood $T_{\leq \bar{t}}(\rho)$, define the maximum mis-classification error for a local estimator ~$\Phi: \ell_{T_{\leq \bar{t}}(\rho)} \rightarrow [k]$ as
	\begin{align*}
		{\sf Err}(\Phi) := \max_{l \in [k]}~\mbb{P} \left( \Phi( \ell_{T_{\leq t}(\rho) }) \neq \ell(\rho) |  \ell(\rho) = l  \right) .
	\end{align*}	
 On the one hand, if
	\begin{align}
		\label{eq:upp.gk}
		{\sf SNR} > 1,
	\end{align}
	the $\bar{t}$- local message-passing Algorithm~\ref{alg:amp-psbm}, denoted by $\hat{A}(\ell_{T_{\leq \bar{t}}(
	\rho)})$, recovers the true labels of the nodes, with mis-classification rate at most 
	\begin{align}
		\label{eq:percent.g-k}
	{\sf Err}(\hat{A})  \leq  (k-1) \exp\left(- \frac{ {\sf SNR} - 1}{2 C + o_{\bar{t}}(1)}  \right) \wedge \frac{k-1}{k}, 
	\end{align}
	where $C\equiv 1$ if the local tree is regular. On the other hand, if
	\begin{align}
	\label{eq:low.gk}
		{\sf SNR} < \frac{1}{4},
	\end{align}
	for any $\bar{t}$-local estimator $\Phi: \ell_{T_{\leq \bar{t}}(\rho)} \rightarrow [k]$, the minimax mis-classification error is lower bounded as
	$$
	\inf_{\Phi}~~  {\sf Err}(\Phi) \geq \frac{1}{2} \left( 1 - C \cdot \frac{\delta}{1-\delta}  \cdot \frac{ {\sf SNR} }{ 1 - 4 \cdot {\sf SNR} } \cdot \frac{(p+q)(q+(p-q)/k)}{pq} - \frac{1}{k} \right) > \frac{1}{2} - C' \frac{\delta}{1-\delta}  \cdot \frac{ {\sf SNR} }{ 1 - 4 \cdot {\sf SNR} } \vee \frac{1}{k},
	$$
	where $C = C' \equiv 1$ if the local tree is regular.

\end{theorem}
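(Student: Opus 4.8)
The plan is to treat the two regimes separately, and in both to first invoke the coupling of Lemma~\ref{lma:coupling.tree} to replace $\text{p-SBM}(n,k,p,q,\delta)$ by the $k$-label broadcasting tree $\text{Tree}_k(\theta,d,\delta)$ with $d=n(q+\tfrac{p-q}{k})$ and $\theta=\tfrac{p-q}{k(q+(p-q)/k)}$, keeping in mind that the coupling is only faithful up to depth $\bar t$, so every estimator under consideration reads only $\ell_{T_{\leq\bar t}(\rho)}$. A one-line computation gives $\alpha:=(1-\delta)\theta^2 d={\sf SNR}$, so \eqref{eq:upp.gk} and \eqref{eq:low.gk} become $\alpha>1$ and $4\alpha<1$ on the tree; the discrepancy between the true local neighborhood and the idealized Galton--Watson tree is exactly what will produce the constant $C$ and the $o_{\bar t}(1)$ term, with $C\equiv1$ in the regular case.

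\textbf{The upper bound} ($\alpha>1$). I would first write down the $k$-label belief-propagation recursion on $\text{Tree}_k(\theta,d,\delta)$ and linearize it exactly as in Section~\ref{sec:bp.amp.k=2}, obtaining the subroutine Algorithm~\ref{alg:k-AMP}: each node carries a $k$-vector of scores (equivalently $k-1$ pairwise log-odds) summing to zero, updated by a distance-weighted majority vote with linearized channel coefficient $\theta$. The crux is a concentration statement generalizing Theorem~\ref{thm:AMP.k=2}: conditionally on $\ell(\rho)=l$, every pairwise difference ``score of $l$ minus score of $l'$'' is at least $\mu_t$ up to fluctuations of order $\sigma_t$, where $(\mu_t,\sigma_t^2)$ satisfy the same affine recursions of Theorem~\ref{thm:AMP.k=2} with the same $\alpha$. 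This is proved by peeling the tree level by level and bounding, for each pairwise difference, the moment generating function of a sum of independent bounded increments over children; the Potts symmetry makes the $k-1$ competing differences exchangeable, so one tail bound plus a union bound over the $k-1$ wrong labels gives a per-node error at most $(k-1)\exp\!\big(-\tfrac{\alpha-1}{2(1+\epsilon(t))}\big)$, mirroring Corollary~\ref{col:amp.k=2}. The rest is bookkeeping: the sparse case $\delta d=o(1)$ is handled by the subtree-aggregation device of Remark~\ref{rem:sparse} applied coordinatewise; the cap $\tfrac{k-1}{k}$ is the uniform random guess; and substituting $\alpha={\sf SNR}$ and $t\asymp\bar t$ into $\epsilon(t)$ produces the $o_{\bar t}(1)$ and hence \eqref{eq:percent.g-k}. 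The graph-to-tree coupling then transfers the bound from the root of $T_{\leq\bar t}(\rho)$ to node $\rho$ in the graph.

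\textbf{The lower bound} ($4\,{\sf SNR}<1$). Here I would run Le Cam's two-point method between the hypotheses $\ell(\rho)=l_1$ and $\ell(\rho)=l_2$ for a fixed pair $l_1\neq l_2$: projecting any $[k]$-valued rule $\Phi$ onto $\{l_1,l_2\}$ shows ${\sf Err}(\Phi)\geq\tfrac12\big(1-d_{\rm TV}(\mu^{l_1}_{\ell_{T_{\leq\bar t}(\rho)}},\mu^{l_2}_{\ell_{T_{\leq\bar t}(\rho)}})\big)$, and a uniform-prior $k$-ary version of the same argument accounts for the $\tfrac1k$ term. It remains to bound the total variation (via a Hellinger/$\chi^2$-type functional) between the two tree measures by a depth recursion: writing the one-step child-label laws under the two hypotheses as $(1-\theta)\bar\nu+\theta\nu^{l_i}$, their difference is $\theta(\nu^{l_1}-\nu^{l_2})$, and combining the one-shot contribution of the $\delta d$ labeled children with that of the $(1-\delta)d$ unlabeled ones yields an inequality of the shape $\Delta_t\le(\text{labeled term})+4(1-\delta)\theta^2 d\cdot\Delta_{t-1}$. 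Iterating, the geometric series converges exactly when $4(1-\delta)\theta^2 d=4\,{\sf SNR}<1$, giving $d_{\rm TV}^2\precsim\frac{\delta d\,\theta^2}{1-4\,{\sf SNR}}$; re-expressing $\theta$ and $d$ in terms of $(n,k,p,q)$ turns this into $\frac{\delta}{1-\delta}\cdot\frac{{\sf SNR}}{1-4\,{\sf SNR}}\cdot\frac{(p+q)(q+(p-q)/k)}{pq}$, which combined with the $\tfrac1k$ term gives the claimed inequality; a mild smallness condition (as in Theorem~\ref{thm:low.k=2}) keeps the right-hand side below $1$ so the bound is non-vacuous, and $C\equiv1$ on regular trees.

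\textbf{Main obstacle.} I expect the real work to be the $k$-label concentration result behind the upper bound: the messages are now vector-valued and the linearized update couples coordinates, so one must control the joint fluctuations of all $k-1$ pairwise differences simultaneously and verify that the variance recursion does not deteriorate with $k$ --- this is precisely what keeps the error at $(k-1)e^{-({\sf SNR}-1)/2}$ rather than accruing extra $k$-dependence. On the lower-bound side the delicate point is justifying the factor $4$ in the depth recursion (and hence the precise location $1/4$ of the provably-hard regime): for $k=2$ one can track a single scalar likelihood ratio whose one-step contraction is $\theta^2$, but for $k\geq3$ one is forced to bound a symmetrized divergence whose contraction coefficient is only controlled by $(2\theta)^2=4\theta^2$, and isolating this loss --- rather than a cruder $k$-dependent one --- requires careful use of the exact Potts symmetry.
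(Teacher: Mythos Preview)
Your proposal is essentially correct and follows the paper's own route: reduce to the broadcasting tree via Lemma~\ref{lma:coupling.tree}, invoke Theorem~\ref{thm:k-AMP} for the upper bound and Theorem~\ref{thm:low.g-k} for the lower bound on the regular tree, and then (as in the proof of Theorem~\ref{thm:sbm.k=2}) use the flow/cutset characterizations of the branching number to push both bounds from the regular tree to the Galton--Watson tree.

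Two small discrepancies on the lower-bound side are worth flagging. First, the paper does not reduce to a binary Le Cam test; it applies Tsybakov's multiple-testing lemma (Lemma~\ref{lma:multi-test}) directly to the family $\{\mu^{(i)}_{\ell_{T_{\leq t}(\rho)}}\}_{i\in[k]}$, and the $-\tfrac{1}{k}$ in the statement is precisely the $-\tfrac{1}{k-1}$ penalty of that lemma, not something you need a separate ``$k$-ary version'' to supply. (Your two-point argument would in fact give $\tfrac12(1-d_{\rm TV})$ with no $-\tfrac1k$, which already implies the stated bound.) Second, the factor $4$ in the recursion is not the clean $(2\theta)^2$ you describe: the paper's Jensen step on the $k$-component mixture produces the contraction coefficient $\theta^2\big(1+3(1-\theta)(k-2)/k\big)$, which is then crudely bounded by $4\theta^2$; this is why the remark after Theorem~\ref{thm:low.g-k} notes that $4\,{\sf SNR}<1$ can be relaxed to $(1-\delta)\theta^2 d\cdot(1+3(1-\theta)(1-2/k))<1$. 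Your heuristic lands at the right threshold but by a different (and less sharp) mechanism than the paper's.
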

When $\delta = o(1)$ and $k>2$, the above lower bound says that no local algorithm (that uses information up to depth $\bar{t}$) can consistently estimate the labels with vanishing error.

	As we did for $k=2$, let us compare the main result for p-SBM with the well-known result for the standard SBM with no partial label information. The boundary in Equation~\eqref{eq:upp.gk} matches the detection bound in \citep{abbe2015detection} for standard SBM when we plug in $\delta=0$, which also matches the well-known Kesten-Stigum (K-S) bound. In contrast to the case $k=2$, it is known that the K-S bound is not sharp when $k$ is large, i.e., there exists an algorithm which can succeed below the K-S bound. A natural question is whether K-S bound is sharp within a certain local algorithm class. As we show in Equation~\eqref{eq:low.gk}, below a quarter of the K-S bound, the distributions (indexed by the root label) on the revealed labels for the local tree are bounded in the total variation distance sense, implying that no local algorithm can significantly push below the K-S bound.  In summary, $1/4 \leq {\sf SNR} \leq 1$ is the limitation for local algorithms. Remarkably, it is known in the literature \citep{chen2014statistical,abbe2015detection} that information-theoretically the limitation for global algorithms is ${\sf SNR} = \mathcal{O}^*(1/k)$.  This suggests a possible computational and statistical gap as $k$ grows.

\subsection{Belief Propagation \& Message Passing}

In this section, we investigate the message-passing Algorithm~\ref{alg:k-AMP} for p-SBM with $k$ blocks, corresponding to multi-label broadcasting trees. Denote $X_t^{(i)}(\ell_{T_{\leq t}(v)}) = \mbb{P} \left( \ell(v) = i | \ell_{T_{\leq t} (v)} \right)$. For $u \in \C(v)$, 
\begin{align*}
	\mbb{P}\left(\ell(u)= \ell(v) | \ell(v) \right) &= \theta + \frac{1-\theta}{k} \\
	\mbb{P}\left(\ell(u)= l \in [k]\backslash \ell(v) | \ell(v)\right) &= \frac{1-\theta}{k}. 
\end{align*}
For any $j \neq i \in [k]$ and general $t$, the following Lemma describes the recursion arising from the Bayes theorem.
\begin{lemma}
	\label{lma:k-BP} 
	It holds that
	\begin{align*}
		\log \frac{X_{t}^{(i)}(\ell_{T_{\leq t}(v)})}{X_{t}^{(j)}(\ell_{T_{\leq t}(v)})} = \log \frac{X_{1}^{(i)}(\ell_{T_{1}(v)})}{X_{1}^{(j)}(\ell_{T_{1}(v)})} + \sum_{u \in \C^{\rm u}(v)} \log \frac{ 1 + \frac{k\theta}{1-\theta} X_{t-1}^{(i)}(\ell_{T_{\leq t-1}(u)}) }{ 1 + \frac{k\theta}{1-\theta} X_{t-1}^{(j)}(\ell_{T_{\leq t-1}(u)}) }. 
	\end{align*}
\end{lemma}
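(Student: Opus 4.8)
The plan is to derive the recursion directly from Bayes' theorem by exploiting the tree structure. First I would condition on the label of $v$ and use the fact that, given $\ell(v)$, the revealed labels in the subtrees hanging off the unlabeled children $u \in \C^{\rm u}(v)$ are mutually independent, as are the revealed labels among the labeled children. This lets me write the posterior $X_t^{(i)}(\ell_{T_{\leq t}(v)})$ proportional to the prior $1/k$ times a product over children of per-child likelihood factors; taking the ratio of the $i$-th and $j$-th posteriors kills the normalizing constant. Isolating the contribution of the labeled children of $v$ (together with the root-layer prior) into the term $\log\frac{X_1^{(i)}(\ell_{T_1(v)})}{X_1^{(j)}(\ell_{T_1(v)})}$ is routine, so the real content is the per-unlabeled-child factor.

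Next I would compute that factor for a single unlabeled child $u \in \C^{\rm u}(v)$. By the law of total probability over $\ell(u) \in [k]$, the probability of observing the subtree data $\ell_{T_{\leq t-1}(u)}$ given $\ell(v) = i$ equals
\begin{align*}
\sum_{m \in [k]} \mbb{P}\!\left(\ell(u) = m \mid \ell(v) = i\right)\, \mbb{P}\!\left(\ell_{T_{\leq t-1}(u)} \mid \ell(u) = m\right).
\end{align*}
Using the broadcasting rule $\mbb{P}(\ell(u)=i \mid \ell(v)=i) = \theta + \frac{1-\theta}{k}$ and $\mbb{P}(\ell(u)=m \mid \ell(v)=i) = \frac{1-\theta}{k}$ for $m \neq i$, this sum splits as $\frac{1-\theta}{k}\sum_{m}\mbb{P}(\ell_{T_{\leq t-1}(u)}\mid \ell(u)=m) + \theta\, \mbb{P}(\ell_{T_{\leq t-1}(u)}\mid \ell(u)=i)$. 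Dividing numerator and denominator by the common first term and recognizing $\mbb{P}(\ell_{T_{\leq t-1}(u)}\mid \ell(u)=i) \big/ \sum_m \mbb{P}(\ell_{T_{\leq t-1}(u)}\mid \ell(u)=m)$ as exactly the posterior $X_{t-1}^{(i)}(\ell_{T_{\leq t-1}(u)})$ (since the prior on $\ell(u)$ is uniform), the factor becomes proportional to $1 + \frac{k\theta}{1-\theta} X_{t-1}^{(i)}(\ell_{T_{\leq t-1}(u)})$, up to a constant independent of $i$. Taking the ratio over $i$ versus $j$ and summing the logarithms over $u \in \C^{\rm u}(v)$ yields the claimed identity.

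The main obstacle, such as it is, is bookkeeping rather than anything deep: one must be careful that the "prior on $\ell(u)$ is uniform" step is legitimate — it holds because $\ell(v)$ itself is uniform a priori and the channel is doubly stochastic, so marginally each node's label is uniform — and that the labeled children of $v$ plus the root layer are correctly absorbed into $X_1$ without double-counting. I would also note in passing that the derivation is uniform in $t$ (the base case $t=1$ being the definition of $X_1$), so no separate induction is needed beyond observing that the subtree posteriors $X_{t-1}^{(i)}$ are themselves well-defined by the recursion applied at depth $t-1$.
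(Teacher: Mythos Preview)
Your proposal is correct and follows essentially the same route as the paper: Bayes' rule, conditional independence over children given $\ell(v)$, splitting off the labeled-children contribution as $\log\frac{X_1^{(i)}}{X_1^{(j)}}$, and for each unlabeled child expanding $\mbb{P}(\ell_{T_{\leq t-1}(u)}\mid \ell(v)=i)$ via the law of total probability over $\ell(u)$ to obtain the factor $\theta\,\mbb{P}(\cdot\mid\ell(u)=i)+\tfrac{1-\theta}{k}\sum_m \mbb{P}(\cdot\mid\ell(u)=m)$, then normalizing to recognize the posterior $X_{t-1}^{(i)}$. Your explicit justification that the uniform prior on $\ell(u)$ makes $\mbb{P}(\cdot\mid\ell(u)=i)/\sum_m \mbb{P}(\cdot\mid\ell(u)=m)=X_{t-1}^{(i)}$ is exactly the step the paper uses implicitly (and in fact the paper's displayed final line contains a typo, writing $1+\theta X$ where it should read $1+\tfrac{k\theta}{1-\theta}X$; your derivation gets this right).
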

The above belief propagation formula for $X_t^{(i)}(\ell_{T_{\leq t}(v)})$ is exact. However, it turns out analyzing the density of $X_t^{(i)}(\ell_{T_{\leq t}(v)})$ is hard. Inspired by the ``linearization'' trick for $k=2$, we analyze the following linearized message-passing algorithm. \\

\begin{algorithm}[H] 
	\KwData{A partially labeled tree $T_{\leq t}(\rho)$ with depth $t$ and labels $\ell_{T_{\leq t}(\rho)}$, fixed  $j \in [k]$.} 
	\KwResult{The messages $M^{(i\rightarrow j)}_{t}(\ell_{T_{\leq t}(v)})$, for any $i \in [k]/j$.} 
	initialization: s = 1, and $M_{0}(\ell_{T_{\leq 0}(v)}) = 0, M_{1}^{(i\rightarrow j)}(\ell_{T_{\leq 1}(v)}) = \left(N_{\C^{\rm l}(v)}(i) - N_{\C^{\rm l}(v)}(j) \right) \log \left(1+\frac{k\theta}{1-\theta} \right)$, $\forall v, i\neq j$\; 
	\While{$s \leq t$}{ focus on $(t-s)$-th layer\; \For{$v \in \C_{t-s}(\rho)$ and $v$ unlabeled}{ update messages for the subtree: $M^{(i\rightarrow j)}_{s}(\ell_{T_{\leq s}(v)}) = M^{(i\rightarrow j)}_{1}(\ell_{T_1(v)}) + \theta \cdot \sum\limits_{u \in \C^{\rm u}(v)} M^{(i\rightarrow j)}_{s-1}(\ell_{T_{\leq s-1}(u)})$\; move one layer up: $s = s+1$ \; } } If $\max_{i \in [k]/j} M_{t}^{(i \rightarrow j)}(\ell_{T_{\leq t}(\rho)})>0$, 
	output $\ell(\rho) = \argmax_{i \in [k]/j} M_{t}^{(i \rightarrow j)}(\ell_{T_{\leq t}(\rho)}) $; Else output $\ell(\rho) = j$. 
	\caption{Approximate Message Passing on Partially Labeled $k$-Broadcasting Tree} \label{alg:k-AMP} 
\end{algorithm}
\medskip 
For p-SBM with $k$ blocks, Algorithm~\ref{alg:amp-psbm}, which uses the above Algorithm~\ref{alg:k-AMP} as a sub-routine, will succeed in recovering the labels in the regime above the threshold \eqref{eq:upp.gk}. The theoretical justification is given in the following sections. 

\subsection{Concentration Phenomenon on Messages} 
As in the case $k=2$, here we provide the concentration result on the distribution of approximate messages recursively calculated based on the tree. 
\begin{theorem}[Concentration of Messages for $k$-AMP, $(1-\delta)\theta^2 d>1$] 
	\label{thm:k-AMP} 
	Consider the Approximate Message Passing (AMP) Algorithm~\ref{alg:k-AMP} on the $k$-label broadcasting tree $\text{Tree}_{k}(\theta,d,\delta)$. Assume  $\delta d = O(1)$. With the initial values
	\begin{align*}
		\mu_1  = \theta \delta d \cdot \log \left( 1+ \frac{k\theta}{1-\theta} \right),\quad \sigma_1^2  = \delta d \cdot \log^2 \left( 1+ \frac{k\theta}{1-\theta} \right)
	\end{align*}
	and the factor parameter
	\begin{align*}
		\alpha  : = (1-\delta)\theta^2 d,
	\end{align*}
	 the recursion of the parameters $\mu_t$, $\sigma_t^2$ follows as in Eq.~\eqref{eq: mean-var-update}. 

	For a certain depth $t$, conditionally on $\ell(v) = l$, the moment generating function for $M^{(i \rightarrow j)}_t(\ell_{T_{\leq t}(v)})$ is upper bounded as
	\begin{equation*}
		\Psi_{M^{(i \rightarrow j)}_t} (\lambda) \leq \left\{ 
		\begin{array}{cc}
			e^{\lambda \mu_t} e^{\frac{\lambda^2 \sigma_t^2}{2}}, &~~ i = l \\
			e^{\frac{\lambda^2 \sigma_t^2}{2}}, &~~ i,j \neq l \\
			e^{-\lambda \mu_t} e^{\frac{\lambda^2 \sigma_t^2}{2}}, &~~ j = l 
		\end{array}
		\right. 
	\end{equation*}
	The message-passing Algorithm~\ref{alg:k-AMP} succeeds in recovering the label with probability at least $1 - (k-1) \exp\left( - \frac{\alpha}{2(1+ o(1))}\right)$ when $(1-\delta)\theta^2 d>1$. 
\end{theorem}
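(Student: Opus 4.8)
The plan is to mimic the $k=2$ analysis from Theorem~\ref{thm:AMP.k=2} and Corollary~\ref{col:amp.k=2}, tracking the linearized messages $M_s^{(i\rightarrow j)}$ along the tree recursion and controlling their moment generating functions. First I would set up the recursion: each message $M_t^{(i\rightarrow j)}(\ell_{T_{\leq t}(v)})$ is the sum of an initialization term $M_1^{(i\rightarrow j)}$ depending on the labeled children of $v$, plus $\theta$ times the sum over unlabeled children $u$ of $M_{t-1}^{(i\rightarrow j)}(\ell_{T_{\leq t-1}(u)})$. Conditioning on $\ell(v)=l$, I would split into three cases according to whether $l=i$, $l=j$, or $l\notin\{i,j\}$, and argue that by the broadcasting rule each child $u$ has a label distribution that, after averaging, produces the claimed one-sided MGF bounds. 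The key recursive identity to establish is: if for depth $t-1$ the MGF of $M_{t-1}^{(i\rightarrow j)}$ conditional on the \emph{child's} label obeys the stated bound, then averaging over the child's label (with weights $\theta+\frac{1-\theta}{k}$ for copying and $\frac{1-\theta}{k}$ for each alternative) followed by the $\theta$-scaling and summing over $(1-\delta)d$ i.i.d.\ unlabeled children, plus the independent contribution of $\delta d$ labeled children (which contributes the $\mu_1,\sigma_1^2$ increments via the initialization formula), reproduces the bound at depth $t$ with the updated parameters $\mu_t = \mu_1 + \alpha\mu_{t-1}$, $\sigma_t^2 = \sigma_1^2 + \alpha\sigma_{t-1}^2 + \alpha\mu_{t-1}^2$. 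The extra $\alpha\mu_{t-1}^2$ variance term should arise exactly as in the $k=2$ case from the cross-term when a fraction of children broadcast the ``wrong'' color, i.e.\ from the mixture over the child's label.

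Next, granting the MGF bounds, I would derive the success probability. By a Chernoff/union bound argument: when $\ell(\rho)=l$, the algorithm fails only if some $M_t^{(i\rightarrow j)}$ with $i\neq l$ (for the fixed reference $j$, or across the scheme) beats $M_t^{(l\rightarrow j)}$ — equivalently some ``wrong'' message is positive while it should be dominated. Using $\mbb{P}(M_t^{(i\rightarrow j)} \geq 0) \leq \inf_{\lambda<0}\Psi_{M_t^{(i\rightarrow j)}}(\lambda)$ in the $i,j\neq l$ case gives $\exp(-\mu_t^2/(2\sigma_t^2))$ after optimizing $\lambda$, and similarly comparing $M_t^{(l\rightarrow j)}$ against $0$. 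Then I would plug in the explicit formulas $\mu_t = \frac{\alpha^t-1}{\alpha-1}\mu_1$ and the matching $\sigma_t^2$ expression, and compute the ratio: exactly as in Corollary~\ref{col:amp.k=2}, the dominant behavior is $\mu_t^2/(2\sigma_t^2) \to (\alpha-1)/2$ as $t\to\infty$ (since $\mu_1/\sigma_1^2 \to \theta$ and the geometric sums conspire to cancel the $\mu_1^2$ coefficient against $\alpha^t$), yielding $\exp(-\alpha/(2(1+o(1))))$ up to lower-order corrections; a union bound over the at most $k-1$ competing indices $i\in[k]\setminus\{j\}$ produces the $(k-1)$ prefactor. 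I should double-check whether the exponent is $(\alpha-1)/2$ or $\alpha/2$ in the stated bound — the theorem writes $\alpha/2$, presumably absorbing the $-1$ into the $o(1)$ or using a slightly looser counting, and I would make the bookkeeping consistent with the $o(1)$ notation used.

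The main obstacle I anticipate is the inductive MGF step in the three-way case split — in particular handling the $i,j\neq l$ case, where neither target color equals the true color, so the message $M_t^{(i\rightarrow j)}$ is a difference of two symmetric-looking but not identically distributed quantities (the counts of color $i$ versus color $j$ among children, neither of which is the majority color). One must show this difference is mean-zero and sub-Gaussian with variance $\sigma_t^2$, and crucially that the variance does not blow up relative to the $i=l$ case; the delicate point is that the $\alpha\mu_{t-1}^2$ term in the variance recursion appears in \emph{all three} cases even though $\mu_{t-1}$ only describes the drift in the $i=l$ case. Resolving this requires carefully writing the child-label mixture: conditional on $\ell(v)=l$ with $l\neq i,j$, a child copies color $l$ with probability $\theta+\frac{1-\theta}{k}$ (contributing a mean-zero message from the subtree, but with the \emph{larger} variance $\sigma_{t-1}^2 + \mu_{t-1}^2$ coming from the $i,j\neq\ell(u)$ sub-case recursively), copies color $i$ or $j$ each with probability $\frac{1-\theta}{k}$ (contributing $\pm\mu_{t-1}$ drift), and otherwise copies an irrelevant color. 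The variance of this per-child mixture, times $(1-\delta)d$ and $\theta^2$, must come out to exactly $\alpha(\sigma_{t-1}^2+\mu_{t-1}^2)$, which forces a specific cancellation I would verify by direct computation. A secondary obstacle is the transition from the tree result to p-SBM via the coupling lemma and the handling of $\delta d = o(1)$ through the subtree-aggregation trick of Remark~\ref{rem:sparse}, but these are routine given the $k=2$ template; I would state them as corollaries rather than reprove them.
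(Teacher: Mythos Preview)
Your proposal is correct and follows the paper's proof essentially verbatim: induction on $t$, Hoeffding's lemma for the base case and for the per-child mixture over labels, then a Chernoff bound plus a union bound over the $k-1$ competing messages. One small correction to your anticipated mechanism in the $i,j\neq l$ case: the extra $\alpha\mu_{t-1}^2$ in the variance recursion does \emph{not} come from a recursively inflated sub-Gaussian parameter when the child inherits color $l$ (by the induction hypothesis that sub-case has parameter exactly $\sigma_{t-1}^2$, not $\sigma_{t-1}^2+\mu_{t-1}^2$); it comes instead from applying Hoeffding's lemma to the three-point mixture of conditional means $\{\theta\mu_{t-1},\,0,\,-\theta\mu_{t-1}\}$ over the child's label, whose range $2\theta\mu_{t-1}$ contributes the $\theta^2\mu_{t-1}^2$ increment per child. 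Your observation about $\alpha/2$ versus $(\alpha-1)/2$ is also correct --- the paper's theorem statement writes $\alpha/2$, but the elaboration immediately afterward (and Corollary~\ref{col:amp.k=2}) gives $(\alpha-1)/2$, which is what the computation actually yields.
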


	Again, from Theorem~\ref{thm:k-AMP} we can easily get the following recovery proportion guarantee. For the message-passing Algorithm~\ref{alg:AMP}, assume $$\alpha:=(1-\delta)\theta^2 d > 1,$$ and define for any $t$
		\begin{align*}
		\epsilon(t) = \frac{(\alpha-1)^2}{\theta^2\delta d} \frac{1}{\alpha^t-1} + \mathcal{O}(\alpha^{-t}), ~~ \text{with}~~ \lim_{t \rightarrow \infty} \epsilon(t) = 0. 
		\end{align*}
		Then Algorithm~\ref{alg:k-AMP} recovers the label of the root node with probability at least $$ 1 - (k-1) \exp\left( - \frac{(1-\delta)\theta^2 d - 1}{2(1 + \epsilon(t))} \right)$$
		with time complexity $$ \mathcal{O} \left( (k-1) \frac{(\delta d +1)[(1-\delta)d]^t - d}{(1-\delta)d - 1} \right).$$

\subsection{Multiple Testing Lower Bound on Local Algorithm Class}
We conclude the theoretical study with a lower bound for local algorithms for $k$-label broadcasting trees. We bound the distributions of leaf labels, indexed by different root colors and show that in total variation distance, the distributions are indistinguishable (below the threshold in equation~\eqref{eq:low.gk}) from each other as $\delta$ vanishes. 
\begin{theorem}[Limitation for Local Algorithms]
	\label{thm:low.g-k}
	Consider the following measures of revealed labels defined on trees indexed by the root's label: $\mu_{\ell_{T_{\leq t}(\rho)}}^{(i)}, i\in [k]$. Assume $\delta d >1 $, 
	$
	(1-\delta)\theta^2d<1/4
	$ 
	and $$2\delta d\log \left( 1 + \theta^2\left( \frac{1}{\theta+\frac{1-\theta}{k}} + \frac{1}{\frac{1-\theta}{k}} \right) \right) <[1 - 4 (1-\delta)\theta^2d]^2. $$
	Then for any $t>0$, the following bound on the $\chi^2$ distance holds:
	\begin{align*}
		\max_{i,j\in [k]}  \log\left( 1 + d_{\chi^2} \left(  \mu^{(i)}_{\ell_{T_{\leq t}(\rho)}} , \mu^{(j)}_{\ell_{T_{\leq t}(\rho)}} \right) \right) \leq  \frac{2\delta d\log \left( 1 + \theta^2\left( \frac{1}{\theta+\frac{1-\theta}{k}} + \frac{1}{\frac{1-\theta}{k}} \right) \right) }{ 1 - 4 (1-\delta)\theta^2d } \leq k \cdot \frac{2\delta \theta^2 d}{ 1 - 4 (1-\delta)\theta^2d } \left( \frac{1}{1-\theta} + \frac{1}{k\theta + 1-\theta}\right).
	\end{align*}
	Furthermore, it holds that
	$$
	\inf_{\Phi} \sup_{l(\rho) \in [k]} \mbb{P} \left(\Phi(\ell_{T_{\leq t}(\rho)}) \neq \ell(\rho) \right) \geq \frac{1}{2} \left(  1 -   \frac{2\delta \theta^2 d}{ 1 - 4 (1-\delta)\theta^2d } \left( \frac{1}{1-\theta} + \frac{1}{k\theta + 1-\theta}\right) - \frac{1}{k}  \right),
	$$
	where~ $\Phi: \ell_{T\leq t}(\rho) \rightarrow [k]$ is any local estimator mapping from the revealed labels to a decision.
\end{theorem}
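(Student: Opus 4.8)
The plan is to mirror the two-stage structure of the proof of Theorem~\ref{thm:low.k=2}, replacing the single binary total-variation bound by a family of pairwise $\chi^2$ bounds and Le Cam's two-point argument by a multiple-hypothesis argument. First, by the coupling of Lemma~\ref{lma:coupling.tree}, the claim about $\bar{t}$-local estimators on $\text{p-SBM}(n,k,p,q,\delta)$ reduces to the corresponding statement about estimating the root label from the revealed leaf labels of $\text{Tree}_k(\theta,d,\delta)$ with $d = n(q+\tfrac{p-q}{k})$, $\theta = \tfrac{p-q}{k(q+(p-q)/k)}$; the factor $\tfrac{(p+q)(q+(p-q)/k)}{pq}$ appearing in the statement is just the bookkeeping of re-expressing these tree parameters back in terms of $(p,q)$. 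So it suffices to (a) establish the displayed $\chi^2$-distance bound between the measures $\mu^{(i)}_{\ell_{T_{\leq t}(\rho)}}$, $i \in [k]$, and then (b) convert it into the minimax error bound.

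For part (a), I would set up a recursion in the depth $t$. Conditionally on the label of $\rho$, the subtrees hanging off the children of $\rho$ are independent, so the likelihood ratios $d\mu^{(i)}/d\mu^{(j)}$ restricted to any subtree factor over children, and hence $1 + d_{\chi^2}$ tensorizes multiplicatively; equivalently $L_t^{(i,j)} := \log\big(1 + d_{\chi^2}(\mu^{(i)}_{\leq t}\,\|\,\mu^{(j)}_{\leq t})\big)$ is sub-additive across children. Each child is, independently, labeled with probability $\delta$. A labeled child injects a fixed ``source'' term: a direct computation for the symmetric Potts channel $M$ gives, for a single revealed child, $d_{\chi^2}(M(i,\cdot)\,\|\,M(j,\cdot)) = \theta^2\big(\tfrac{1}{\theta+(1-\theta)/k} + \tfrac{1}{(1-\theta)/k}\big)$ --- precisely the quantity inside the logarithm in the statement. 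An unlabeled child contributes a \emph{contracted} copy of the previous level: marginalizing its (unrevealed) label over $M$ contracts the relevant likelihood-ratio components by the second eigenvalue $\lambda_2(M) = \theta$, so the second-moment ($\chi^2$) quantity is contracted by roughly $\theta^2$, and crude decoupling of the cross-terms between the ``source already inside the subtree'' part and the ``freshly contracted'' part costs a further constant factor --- this is, I expect, the origin of the $4$ in the denominator and the $2$ in the numerator. Combining the $\approx \delta d$ labeled and $\approx (1-\delta)d$ unlabeled children yields a linear recursion of the form $L_t \leq 2\delta d \log(1 + c_{k,\theta}) + 4(1-\delta)\theta^2 d \cdot L_{t-1}$, whose fixed point (when $4(1-\delta)\theta^2 d < 1$, i.e.\ ${\sf SNR} < 1/4$) is the asserted $\tfrac{2\delta d\log(1+c_{k,\theta})}{1 - 4(1-\delta)\theta^2 d}$; the extra hypothesis $2\delta d\log(1+\cdots) < [1-4(1-\delta)\theta^2d]^2$ is exactly what keeps the iteration in the regime where the $\log(1+\cdot)$ linearizations used to derive the recursion remain valid uniformly in $t$. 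The final inequality $\leq k\cdot\tfrac{2\delta\theta^2 d}{1-4(1-\delta)\theta^2 d}(\cdots)$ is then just $\log(1+x)\leq x$ together with the elementary bound $\tfrac{1}{\theta+(1-\theta)/k}+\tfrac{1}{(1-\theta)/k} \leq k\big(\tfrac{1}{1-\theta} + \tfrac{1}{k\theta+1-\theta}\big)$.

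For part (b), put a uniform prior on $\ell(\rho) \in [k]$. Then for any estimator $\Phi$, $\sup_{l}\mbb{P}(\Phi\neq l \mid \ell(\rho)=l) \geq 1 - \tfrac1k\sum_{i\in[k]}\mbb{P}_{\mu^{(i)}}(\Phi = i)$, and comparing each $\mu^{(i)}$ to the mixture reference measure $\bar\mu = \tfrac1k\sum_j \mu^{(j)}$ (using joint convexity of $\chi^2$ to bound $d_{\chi^2}(\mu^{(i)}\,\|\,\bar\mu)$ by the pairwise quantities from part (a), and Cauchy--Schwarz over the $k$ terms) gives $\tfrac1k\sum_i\mbb{P}_{\mu^{(i)}}(\Phi=i) \leq \tfrac1k + \tfrac1k\big(\sum_i d_{\chi^2}(\mu^{(i)}\,\|\,\bar\mu)\big)^{1/2}$; feeding in the bound of part (a) (via $d_{\rm KL}\leq \log(1+d_{\chi^2})$ and Pinsker, or directly) and simplifying produces the stated form $\inf_\Phi\sup_l\mbb{P}(\Phi\neq \ell(\rho)) \geq \tfrac12\big(1 - (\text{linear-in-}\delta\text{ term}) - \tfrac1k\big)$, the $\tfrac1k$ being the unavoidable random-guessing floor and the $\delta$-term vanishing as $\delta\to0$. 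The precise constants in the displayed inequality reflect the particular chain of elementary steps used; any of the standard multiple-hypothesis reductions gives a bound of at least this strength.

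The main obstacle is part (a): the second-moment recursion for general $k$. In the Ising case $k=2$ one has the exact EKPS-type identity (which is why Theorem~\ref{thm:low.k=2} carries the clean denominator $1-(1-\delta)\theta^2 d$), but for the Potts channel the second eigenvalue of $M$ is $(k-1)$-fold degenerate, so one must control the interaction of the $k-1$ ``signal directions'' together with the labeled-node source terms without an exact identity; the crude decoupling that makes the recursion closeable is precisely what degrades the threshold from the conjectured ${\sf SNR}<1$ to ${\sf SNR}<1/4$. A secondary technical point is initialization: the hypothesis $\delta d > 1$ ensures labeled nodes genuinely appear (and when $\delta d = o(1)$ one first applies the subtree-aggregation device of Remark~\ref{rem:sparse}) so that the geometric series has the claimed sum.
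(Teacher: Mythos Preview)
Your plan is essentially the paper's proof. Two small corrections on the bookkeeping in part (a): the recursion the paper actually derives is
\[
\log(1+d_{\chi^2}^t)\ \le\ \delta d\,\log\!\Big(1+\theta^2\big(\tfrac{1}{\theta+(1-\theta)/k}+\tfrac{1}{(1-\theta)/k}\big)\Big)\ +\ (1-\delta)d\,\log\!\big(1+\theta^2\big(1+\tfrac{3(1-\theta)(k-2)}{k}\big)\,d_{\chi^2}^{t-1}\big),
\]
so the $4$ in the denominator comes from the crude bound $1+\tfrac{3(1-\theta)(k-2)}{k}\le 4$ (obtained via Jensen on the mixture denominator together with $(\mu^{(i)}-\mu^{(j)})^2\le 2(\mu^{(i)}-\mu^{(l)})^2+2(\mu^{(j)}-\mu^{(l)})^2$ for each third color $l$), while the $2$ in the numerator is \emph{not} part of the recursion at all---it appears only at the fixed-point step, from $x-\tfrac12 x^2\le\log(1+x)$ when solving $c^*-\tfrac12(c^*)^2\le \delta d\log(1+c_{k,\theta})+4(1-\delta)\theta^2 d\,c^*$ (this is also exactly what the extra hypothesis $2\delta d\log(1+\cdots)<[1-4(1-\delta)\theta^2 d]^2$ is for). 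For part (b) the paper takes a shorter route than your mixture/Cauchy--Schwarz/Pinsker sketch: it directly invokes Tsybakov's multiple-testing lemma (Proposition~2.4 of \emph{Introduction to Nonparametric Estimation}), which converts the pairwise $\chi^2$ bound into $\inf_\Phi\max_i \mbb{P}_i(\Phi\neq i)\ge\tfrac12(1-\alpha_*-\tfrac{1}{k-1})$ in one step; your approach would also land, but is a detour.
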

	The proof is based on a multiple testing argument in Le Cam's minimax lower bound theory. We would like to remark that condition $4 \cdot (1-\delta)\theta^2 d <1$ can be relaxed to 
	$$
	(1-\delta)\theta^2 d \cdot \left( 1+ 3(1-\theta)(1-\frac{2}{k})\right) < 1.
	$$

\section{Numerical Studies}
\label{sec:num.study}
In this section we apply our approximate message-passing Algorithm~\ref{alg:amp-psbm} to the political blog dataset \citep{adamic2005political}, with a total of 1222 nodes. In the literature, the state-of-the-art result for a global algorithm appears in \citep{jin2015fast}, where the mis-classification rate is $58/1222 = 4.75\%$. Here we run our message-passing Algorithm~\ref{alg:amp-psbm} with three different settings $\delta = 0.1, 0.05, 0.025$, replicating each experiment $50$ times (we sample the revealed nodes independently in $50$ experiments for each $\delta$ specification). As a benchmark, we compare our results to the spectral algorithm on the $(1-\delta)n$ sub-network.  For our message-passing algorithm, we look at the local tree with depth 1 to 5.  The results are summarized as boxplots in Figure~\ref{fig:polblg}. The left figure illustrates the comparison of AMP with depth 1 to 5 and the spectral algorithm, with red, green, blue  boxes corresponding to $\delta = 0.025, 0.05, 0.1$, respectively. The right figure zooms in on the left plot with only AMP depth 2 to 4 and spectral, to better emphasize the difference. 
Remark that if we only look at depth 1, some of the nodes may have no revealed neighbors. In this setting, we classify this node as wrong (this explains why depth-1 error can be larger than 1/2).
\begin{figure}[H] 
	\begin{subfigure}
		{0.48\textwidth} \centering 
		\includegraphics[width=3.3in]{./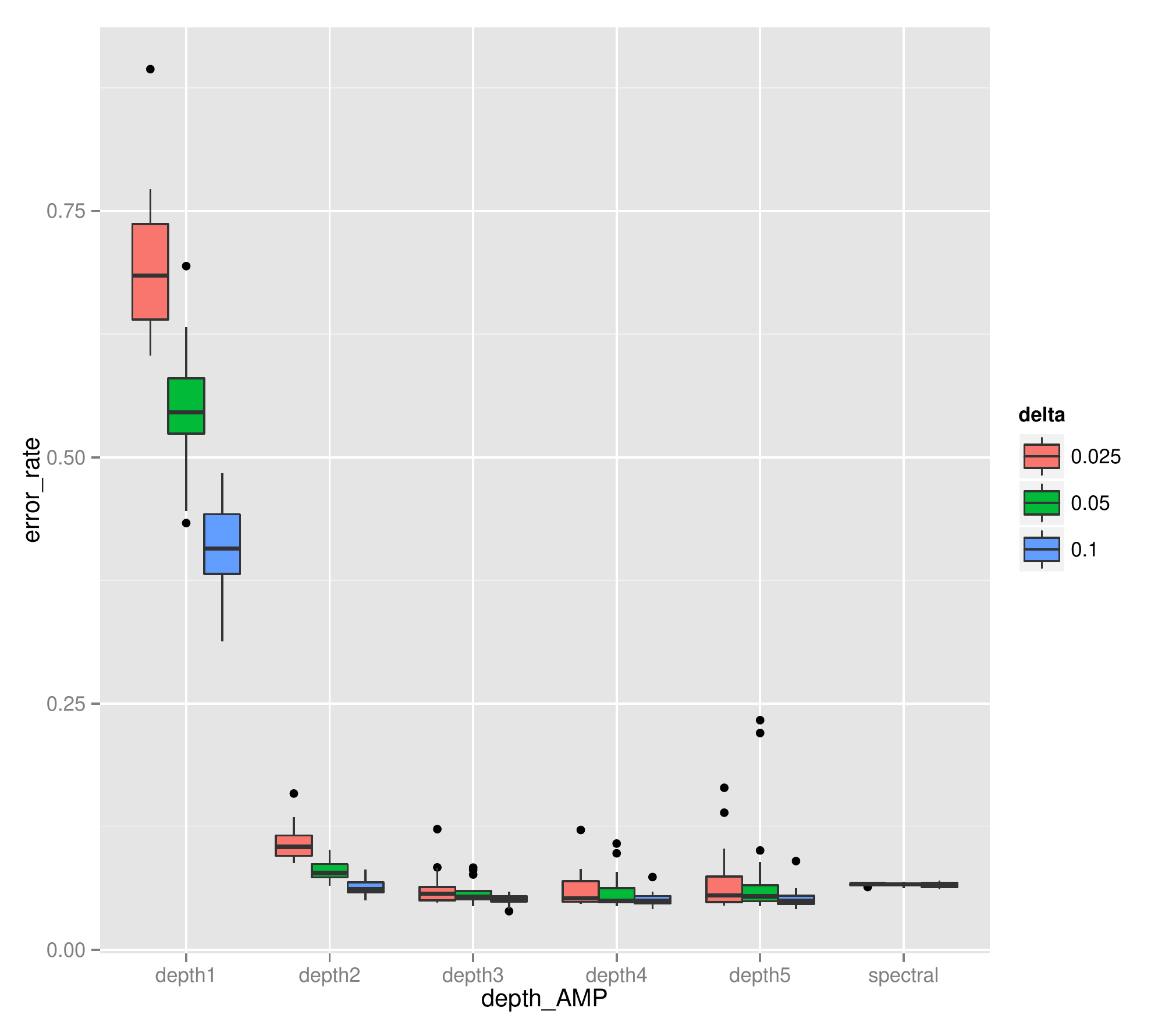} 
	\end{subfigure}
	\begin{subfigure}
		{0.48\textwidth} \centering 
		\includegraphics[width=3.3in]{./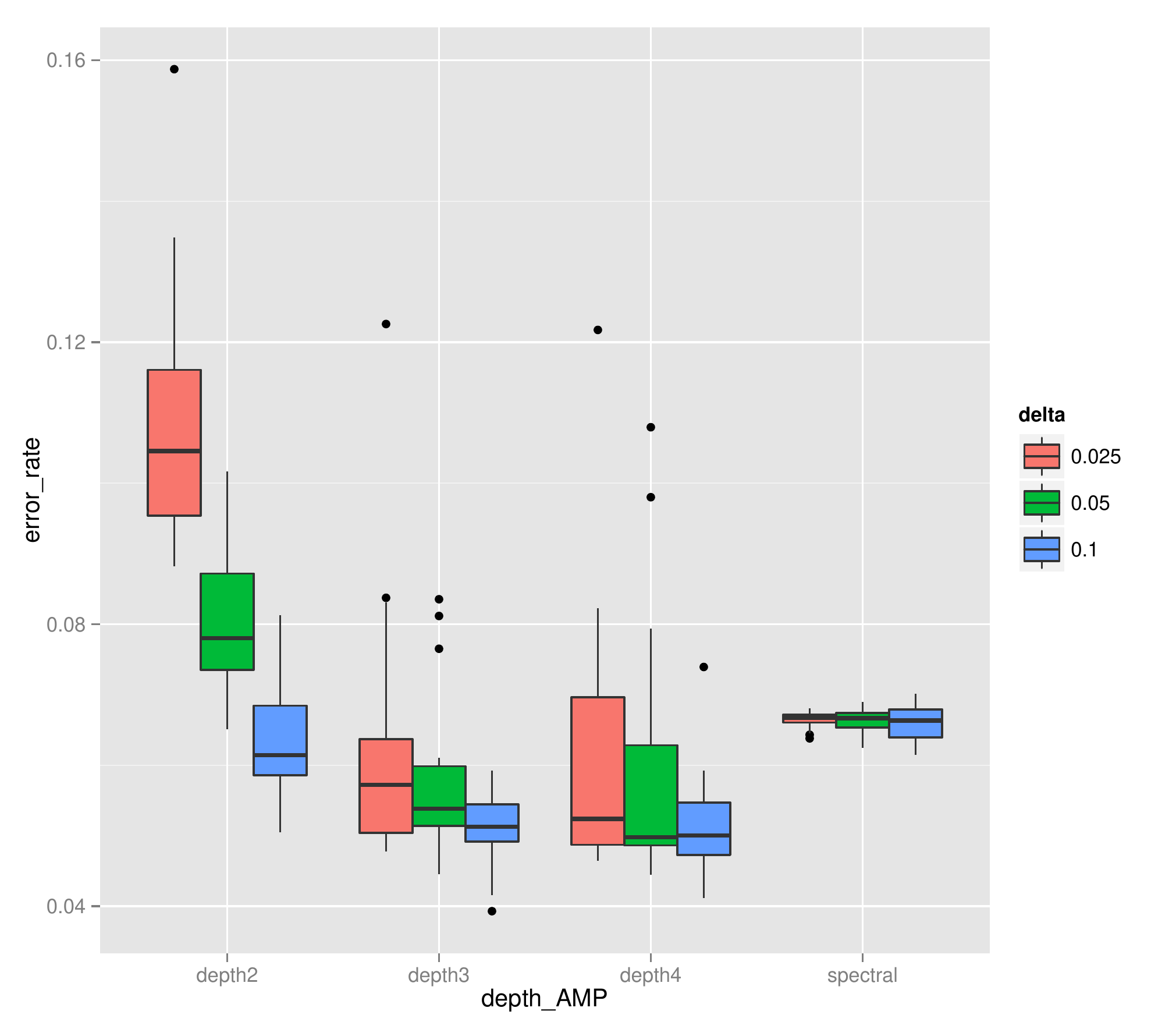} 
	\end{subfigure}
	\caption{AMP algorithm on Political Blog Dataset.} 
	\label{fig:polblg}
\end{figure}
We present in this paragraph some of the statistics of the experiments, extracted from the above Figure~\ref{fig:polblg}. In the case $\delta = 0.1$,  from depth 2-4, the AMP algorithm produces the mis-classification error rate (we took the median over the experiments for robustness) of $6.31\%, 5.22\%, 5.01\%$, while the spectral algorithm produces the error rate $6.68\%$. When $\delta = 0.05$, i.e. about 60 node labels revealed, the error rates are $7.71\%, 5.44\%, 5.08\%$ for the AMP algorithm with depth 2 to 4, contrasted to the spectral algorithm error $6.66\%$. In a more extreme case $\delta = 0.025$ when there are only $~30$ node labels revealed, AMP depth 2-4 has error $10.20\%, 5.71\%, 5.66\%$, while spectral is $6.63\%$. In general, the AMP algorithm with depth 3-4 uniformly beats the vanilla spectral algorithm. Note our AMP algorithm is a distributed decentralized algorithm that can be run in parallel. We acknowledge that the error $\sim 5\%$ (when $\delta$ is very small) is still slightly worse than the state-of-the-art degree-corrected SCORE algorithm in \citep{jin2015fast}, which is $4.75\%$. 


\section{Technical Proofs}
\label{sec:proof}

We will start with two useful Lemmas. Lemma~\ref{lma:coupling.tree} couples the local behavior of a stochastic block model with that of a Galton-Watson branching process. Lemma~\ref{lma:hoeff} is the well-known Hoeffding's inequality. 
\begin{lemma}[Proposition 4.2 in \citep{mossel2012stochastic}]
	\label{lma:coupling.tree}
	Take $t = \bar{t}_{n,k,p,q} \precsim \frac{\log n}{\log [kn(q + \frac{p-q}{k})]} $. There exists a coupling between $(G, \sigma)$ and $(T, \ell)$ such that $(G_{\leq t}, \sigma_{G_{\leq t}}) = (T_{\leq t}, \ell_{T_{\leq t}})$ asymptotically almost surely. Here $(T,\ell)$ corresponds to the broadcast process on a Galton-Watson tree process $T$ with offspring distribution ${\sf Poisson}\left(n(q + \frac{p-q}{k})\right)$, and $(G,\sigma)$ corresponds to the SBM and its labels. 
\end{lemma}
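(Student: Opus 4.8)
The plan is to build the coupling explicitly through a simultaneous breadth-first exploration of the neighborhood of $\rho$ in the graph and the offspring structure of the Galton-Watson tree, and then to show that the two explored structures coincide with probability $1-o(1)$ by combining a Poisson approximation with a ``no short cycle'' estimate. The delicate point throughout is that the exponential-in-$t$ growth of the explored set must be kept below $n^{1/2}$, and this is exactly the role played by the depth threshold $\bar t$.

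First I would verify the parameter matching, which is the algebraic heart that makes the statement plausible. A node with label $l$ in the balanced SBM has $n/k-1$ potential same-community neighbors, each present independently with probability $p$, and $(k-1)n/k$ potential neighbors in the other communities, each present with probability $q$; hence its degree is a sum of independent Binomials with mean $(n/k)p + (k-1)(n/k)q = n\left(q + \frac{p-q}{k}\right) =: d$, matching the ${\sf Poisson}(d)$ offspring law. Moreover, conditioned on a child being a neighbor, it shares the parent's label with probability $\frac{p}{p+(k-1)q}$ and carries any given one of the other $k-1$ labels with probability $\frac{q}{p+(k-1)q}$; substituting $\theta = \frac{p-q}{k(q+(p-q)/k)} = \frac{p-q}{p+(k-1)q}$ one checks $\theta + \frac{1-\theta}{k} = \frac{p}{p+(k-1)q}$ and $\frac{1-\theta}{k} = \frac{q}{p+(k-1)q}$, so the conditional label law of a child in the graph is exactly the broadcast rule of $\text{Tree}_k(\theta,d,\delta)$.

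Next I would set up the coupling itself. Exploring layer by layer from $\rho$, when a graph node $v$ with label $l$ is processed I reveal its un-visited neighbors: the number of new same-community neighbors is $\text{Binomial}(n/k - a_l, p)$ and the number of new neighbors in each other community $c$ is $\text{Binomial}(n/k - a_c, q)$, where $a_\cdot$ counts already-visited vertices of each community. Using the standard bound $d_{\rm TV}(\text{Binomial}(m,r), {\sf Poisson}(mr)) \leq mr^2$, I couple each such Binomial to a Poisson, aggregate the same/other-community Poissons into a single ${\sf Poisson}(d)$, and split its mass into labels by the multinomial law matching the broadcast rule from the previous step; this couples the children of $v$ in the graph with the offspring of the corresponding tree vertex, together with their labels. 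The $\delta$-revealing is then handled trivially: each visited vertex is independently marked ``revealed'' with probability $\delta$ on both sides using the same coin, so the coupling of $\ell_{T_{\leq t}}$ and $\sigma_{G_{\leq t}}$ follows once the labels are coupled.

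The main obstacle is controlling the accumulated discrepancy, and this is where the restriction $t \precsim \frac{\log n}{\log[kn(q+\frac{p-q}{k})]}$ enters. Since $d \precsim n^{o(1)}$ under $np \asymp nq \precsim n^{o(1)}$, with high probability the explored set $S$ up to depth $t$ satisfies $|S| \precsim (kd)^t \precsim n^{c}$ for a constant $c<1/2$ (this is precisely what the stated bound on $t$ buys, with the implicit constant taken small enough; the atypical event of an anomalously large tree has probability $o(1)$ by a Markov/martingale estimate on the Poisson branching). Given $|S| \precsim n^{c}$, two failure events must be ruled out. First, \emph{drift in the Binomial parameters}: since $|S| = o(n)$, the remaining community sizes stay $(1-o(1))\,n/k$, so replacing $n/k - a_\cdot$ by $n/k$ costs only a further $o(1)$ in total variation per vertex. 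Second, \emph{collisions/short cycles}: a discrepancy from a genuine tree arises only if the BFS ever reconnects to an already-visited vertex, and since any fixed pair in $S$ carries an edge with probability at most $p \precsim n^{-1+o(1)}$, a union bound gives expected extra edges at most $\binom{|S|}{2}p \precsim n^{2c-1+o(1)} = o(1)$ when $c<1/2$. Summing the per-vertex Poisson-approximation errors, which total at most $|S|\cdot n\max(p,q)^2 \precsim n^{c}\cdot n^{-1+o(1)} = o(1)$, together with the collision bound, the overall coupling failure probability is $o(1)$. Hence $(G_{\leq t}, \sigma_{G_{\leq t}}) = (T_{\leq t}, \ell_{T_{\leq t}})$ asymptotically almost surely, which is the claim.
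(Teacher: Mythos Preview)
The paper does not give its own proof of this lemma: it is quoted verbatim as Proposition~4.2 of \cite{mossel2012stochastic} and used as a black box. Your breadth-first exploration coupled with Poisson approximation and a collision/union bound is precisely the standard argument from that source (generalized from $k=2$ to general $k$), so in that sense your approach coincides with the original.

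Two small points worth tightening. First, the explored set has size of order $d^t$, not $(kd)^t$; the extra factor $k$ you carry is harmless for the upper bound but obscures why the depth threshold in the statement contains $kd$ rather than $d$---the honest reason is that the $\precsim$ hides an implicit constant below $1/2$, as you correctly note, and the $k$ in the denominator is cosmetic when $k\precsim n^{o(1)}$. Second, your per-vertex Poisson approximation bound $|S|\cdot n\max(p,q)^2$ is the right order, but strictly speaking you also need to absorb the drift from depleted community sizes into the same total-variation budget rather than treating it as a separate ``first'' failure mode; in practice one bounds $d_{\rm TV}\big(\text{Binomial}(n/k - a, p),\, {\sf Poisson}((n/k)p)\big)$ directly, which handles both the Poissonization and the $a$-shift in one step. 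Neither issue is a genuine gap.
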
 
\bigskip

\begin{lemma}[Hoeffding's Inequality]
	\label{lma:hoeff}
	Let X be any real-valued random variable with expected value $\mathbb{E}X = 0$ and such that $a \leq X \leq b$ almost surely. Then, for all $\lambda>0$,
	$$
	\mathbb{E} \left[ e^{\lambda X} \right] \leq \exp \left( \frac{\lambda^2 (b - a)^2}{8} \right).
	$$
\end{lemma}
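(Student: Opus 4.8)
The plan is to prove this moment generating function bound (Hoeffding's lemma) by the classical two-step argument: first linearize $e^{\lambda X}$ via convexity to replace the expectation by a deterministic quantity, and then show that this quantity is dominated by $\exp(\lambda^2(b-a)^2/8)$ through a uniform second-derivative estimate. No probabilistic tools beyond $\mathbb{E}X = 0$ and the boundedness $a \leq X \leq b$ are needed.

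First I would observe that $\mathbb{E}X = 0$ together with $a \leq X \leq b$ forces $a \leq 0 \leq b$; if $a = b$ then $X \equiv 0$ and the claim is trivial, so I may assume $a < b$. For every $x \in [a,b]$ I would write $x$ as the convex combination $x = \frac{b-x}{b-a}\,a + \frac{x-a}{b-a}\,b$ and invoke convexity of the exponential to obtain the pointwise bound $e^{\lambda x} \leq \frac{b-x}{b-a}\,e^{\lambda a} + \frac{x-a}{b-a}\,e^{\lambda b}$. Taking expectations of this inequality applied at $x = X$, and using $\mathbb{E}X = 0$ to eliminate the terms linear in $X$, yields
$$
\mathbb{E}\left[ e^{\lambda X} \right] \leq \frac{b}{b-a}\,e^{\lambda a} - \frac{a}{b-a}\,e^{\lambda b}.
$$

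Next I would set $p := -a/(b-a) \in [0,1]$ and $u := \lambda(b-a) \geq 0$, and rewrite the right-hand side above as $e^{\psi(u)}$ where $\psi(u) := -pu + \log\!\left(1 - p + p\,e^{u}\right)$. A direct computation gives $\psi(0) = 0$ and $\psi'(0) = 0$, while $\psi''(u) = \rho(u)\bigl(1 - \rho(u)\bigr)$ with $\rho(u) := p\,e^{u} / (1 - p + p\,e^{u}) \in (0,1)$. Since $t(1-t) \leq 1/4$ for every real $t$, we obtain $\psi''(u) \leq 1/4$ uniformly in $u$. A second-order Taylor expansion of $\psi$ about $0$ with Lagrange remainder then gives $\psi(u) \leq u^2/8$, and hence $\mathbb{E}[e^{\lambda X}] \leq e^{\psi(u)} \leq \exp\!\left(\lambda^2 (b-a)^2/8\right)$, which is the claim.

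This is a standard lemma and there is no genuine obstacle; the only point requiring any care is the uniform bound $\psi'' \leq 1/4$, which is exactly where the sharp constant $1/8$ (as opposed to a cruder value) originates. It amounts to applying the elementary inequality $\rho(1-\rho) \leq 1/4$ along the entire line in $u$ rather than merely at a single point, so that the Taylor remainder can be controlled globally.
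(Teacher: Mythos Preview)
Your proof is correct and is the classical argument for Hoeffding's lemma. The paper itself does not supply a proof of this lemma; it simply states it as a well-known tool and invokes it later (e.g., in the inductive MGF bounds of Theorems~\ref{thm:AMP.k=2} and~\ref{thm:k-AMP}), so there is nothing to compare against.
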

\bigskip

Let us now derive the algorithms for belief propagation. 
\begin{proof}[Derivation of Belief Propagation, $k=2$]
\label{pf:deri.k=2}
The algorithm we rely on is Belief Propagation (or MAP). We recursively calculate the posterior probability $\mbb{P} \left( \ell(\rho) = + | \ell_{T_{\leq t}(\rho)} \right) $ backwards from the leaf of the tree. The recursion holds from $t\rightarrow t-1$ via Bayes Theorem 
\begin{align*}
	& \quad \mbb{P}\left( \ell(\rho) = + | \ell_{T_{\leq t}(\rho)} \right) \\
	& = \frac{\mbb{P} \left( \ell_{T_{\leq t}(\rho)} | \ell(\rho) = + \right) \mbb{P} \left(\ell(\rho) = +\right) }{ \mbb{P} \left( \ell_{T_{\leq t}(\rho)} | \ell(\rho) = + \right) \mbb{P} \left(\ell(\rho) = +\right) + \mbb{P} \left( \ell_{T_{\leq t}(\rho)} | \ell(\rho) = - \right) \mbb{P} \left(\ell(\rho) = -\right)} \\
	& = \frac{\prod\limits_{v \in \C^{\rm l}(\rho)}\mbb{P} \left( \ell(v)| \ell(\rho) = + \right) \prod\limits_{v \in \C^{\rm u}(\rho)} \mbb{P} \left( \ell_{T_{\leq t-1}}(v) | \ell(\rho) = + \right) }{\prod\limits_{v \in \C^{\rm l}(\rho)}\mbb{P} \left( \ell(v)| \ell(\rho) = + \right) \prod\limits_{v \in \C^{\rm u}(\rho)} \mbb{P} \left( \ell_{T_{\leq t-1}}(v) | \ell(\rho) = + \right) + \prod\limits_{v \in \C^{\rm l}(\rho)}\mbb{P} \left( \ell(v)| \ell(\rho) = - \right) \prod\limits_{v \in \C^{\rm u}(\rho)} \mbb{P} \left( \ell_{T_{\leq t-1}}(v) | \ell(\rho) = - \right)}. 
\end{align*}
Here we denote for the $\delta d$ labeled nodes 
\begin{align*}
	M_1(\ell_{T_{1}(\rho)}):= \log \frac{\mbb{P}\left( \ell(\rho) = + | \ell_{T_{\leq 1}(\rho)}\right)}{\mbb{P}\left( \ell(\rho) = - | \ell_{T_{\leq 1}(\rho)}\right)} = \log \frac{\prod\limits_{v \in \C^{\rm l}(\rho)}\mbb{P} \left( \ell(v)| \ell(\rho) = + \right) }{ \prod\limits_{v \in \C^{\rm l}(\rho)}\mbb{P} \left( \ell(v)| \ell(\rho) = - \right) } = \left(N_{\C^{\rm l}(\rho)}(+) - N_{\C^{\rm l}(\rho)}(-) \right) \log \frac{1+\theta}{1-\theta}, 
\end{align*}
where $N_{\C^{\rm l}(\rho)}(+) \sim {\rm Binom}\left(\delta d, \frac{1+\theta}{2}\right)$, $N_{\C^{\rm l}(\rho)}(-)= \delta d - N_{\C^{\rm l}(\rho)}(+)$. For the unlabeled nodes $v \in \C^{\rm u}(\rho)$ define 
\begin{align*}
	& M_{t-1}(\ell_{T_{\leq t-1}(v)}):= \log \frac{ \mbb{P} \left( \ell(v) = + | \ell_{T_{\leq t-1}}(v) \right)}{ \mbb{P} \left(\ell(v) = - | \ell_{T_{\leq t-1}}(v) \right)} = \log \frac{ 1 + \frac{\mbb{P} \left( \ell_{T_{\leq t-1}}(v)| \ell(v) = + \right) - \mbb{P} \left( \ell_{T_{\leq t-1}}(v)| \ell(v) = - \right)}{\mbb{P} \left( \ell_{T_{\leq t-1}}(v)| \ell(v) = + \right) + \mbb{P} \left( \ell_{T_{\leq t-1}}(v)| \ell(v) = - \right)} }{ 1 - \frac{\mbb{P} \left( \ell_{T_{\leq t-1}}(v)| \ell(v) = + \right) - \mbb{P} \left( \ell_{T_{\leq t-1}}(v)| \ell(v) = - \right)}{\mbb{P} \left( \ell_{T_{\leq t-1}}(v)| \ell(v) = + \right) + \mbb{P} \left( \ell_{T_{\leq t-1}}(v)| \ell(v) = - \right)} }, 
\end{align*}
which means 
\begin{align*}
	\frac{\mbb{P} \left( \ell_{T_{\leq t-1}}(v)| \ell(v) = + \right) - \mbb{P} \left( \ell_{T_{\leq t-1}}(v)| \ell(v) = - \right)}{\mbb{P} \left( \ell_{T_{\leq t-1}}(v)| \ell(v) = + \right) + \mbb{P} \left( \ell_{T_{\leq t-1}}(v)| \ell(v) = - \right)} = \tanh \left( M_{t-1}(\ell_{T_{\leq t-1}(v)})/2 \right). 
\end{align*}
Now we have 
\begin{align*}
	&\quad \log \frac{ \mbb{P} \left( \ell_{T_{\leq t-1}}(v) | \ell(\rho) = + \right)}{ \mbb{P} \left( \ell_{T_{\leq t-1}}(v) | \ell(\rho) = - \right)} \\
	& = \log \frac{ \mbb{P} \left( \ell_{T_{\leq t-1}}(v), \ell(v) = + | \ell(\rho) = + \right) + \mbb{P} \left( \ell_{T_{\leq t-1}}(v), \ell(v) = - | \ell(\rho) = + \right)}{ \mbb{P} \left( \ell_{T_{\leq t-1}}(v),\ell(v) = + | \ell(\rho) = - \right)+ \mbb{P} \left( \ell_{T_{\leq t-1}}(v), \ell(v) = - | \ell(\rho) = - \right)} \\
	& = \log \frac{ \mbb{P} \left( \ell_{T_{\leq t-1}}(v)| \ell(v) = + \right) \cdot \frac{1+\theta}{2} + \mbb{P} \left( \ell_{T_{\leq t-1}}(v)| \ell(v) = - \right) \cdot \frac{1-\theta}{2} }{ \mbb{P} \left( \ell_{T_{\leq t-1}}(v) | \ell(v) = + \right) \cdot \frac{1-\theta}{2}+ \mbb{P} \left( \ell_{T_{\leq t-1}}(v) | \ell(v) = - \right) \cdot \frac{1+\theta}{2}} \\
	& = \log \frac{ 1 + \theta \cdot \frac{\mbb{P} \left( \ell_{T_{\leq t-1}}(v)| \ell(v) = + \right) - \mbb{P} \left( \ell_{T_{\leq t-1}}(v)| \ell(v) = - \right)}{\mbb{P} \left( \ell_{T_{\leq t-1}}(v)| \ell(v) = + \right) + \mbb{P} \left( \ell_{T_{\leq t-1}}(v)| \ell(v) = - \right)} }{1 - \theta \cdot \frac{\mbb{P} \left( \ell_{T_{\leq t-1}}(v)| \ell(v) = + \right) - \mbb{P} \left( \ell_{T_{\leq t-1}}(v)| \ell(v) = - \right)}{\mbb{P} \left( \ell_{T_{\leq t-1}}(v)| \ell(v) = + \right) + \mbb{P} \left( \ell_{T_{\leq t-1}}(v)| \ell(v) = - \right)} } \\
	& = \log \frac{ 1 + \theta \cdot \tanh \left( M_{t-1}(\ell_{T_{\leq t-1}(v)}) / 2 \right) }{1 - \theta \cdot \tanh \left( M_{t-1}(\ell_{T_{\leq t-1}(v)}) / 2 \right) } \\
	& = f_{\theta}\left(M_{t-1}(\ell_{T_{\leq t-1}(v)})\right). 
\end{align*}
Thus we have the following recursion 
\begin{align*}
	M_{t} (\ell_{T_{\leq t}(\rho)}) = M_1(\ell_{T_{1}(\rho)}) + \sum_{v \in \C^{\rm u}(\rho)} f_{\theta}\left(M_{t-1}(\ell_{T_{\leq t-1}(v)})\right) 
\end{align*}
with $|\C^{\rm u}(\rho)| = (1-\delta)d$. The notation $M_{t}(\ell_{T_{\leq t}(\rho)})$ can be viewed as the messages (logit) of the root label $\ell(\rho)$ for the depth $t$ tree that rooted from $\rho$. $M_1(\ell_{T_{1}(\rho)})$ denotes the message on the labels on depth $1$ layer with root $\rho$. These messages denote the belief of the labeling of the node $\rho$ based on the random labels $\ell_{T_{\leq t}(\rho)}$. 
\end{proof}
\bigskip

\begin{proof}[Derivation of Lemma~\ref{lma:k-BP}, BP, general $k$] 
	
	Note $X_{t-1}^{(i)}(\ell_{T_{\leq t-1}(u)})$ are $(1-\delta)d$ i.i.d conditionally on $\ell(v)$. The initial message is
	\begin{align*}
	\log \frac{X_{1}^{(i)}(\ell_{T_{1}(v)})}{X_{1}^{(j)}(\ell_{T_{1}(v)})} = \left(N_{\C^{\rm l}(v)}(i) - N_{\C^{\rm l}(v)}(j) \right) \cdot \log \left(1+ \theta \right). 
	\end{align*}
	For the case when $\ell(v) = l$, we have 
	\begin{align*}
	N_{\C^{\rm l}(v)}(l) \sim {\rm Binom}(\delta d, \theta + \frac{1-\theta}{k}) \\
	N_{\C^{\rm l}(v)}(i) \sim {\rm Binom}(\delta d, \frac{1-\theta}{k}), i\in [k]/l. 
	\end{align*}
	Then
	\begin{align*}
	\log \frac{X_{t}^{(i)}(\ell_{T_{\leq t}(\rho)})}{X_{t}^{(j)}(\ell_{T_{\leq t}(\rho)})} & = \log \frac{ \mbb{P}\left( \ell_{T_{\leq t}(\rho)} | \ell(\rho) = i \right) }{ \mbb{P}\left( \ell_{T_{\leq t}(\rho)} | \ell(\rho) = j \right) } \\
	& = \log \frac{\prod\limits_{v \in \C^{\rm l}(\rho)}\mbb{P} \left( \ell(v)| \ell(\rho) = i \right) \prod\limits_{v \in \C^{\rm u}(\rho)} \mbb{P} \left( \ell_{T_{\leq t-1}}(v) | \ell(\rho) = i \right)}{\prod\limits_{v \in \C^{\rm l}(\rho)}\mbb{P} \left( \ell(v)| \ell(\rho) = j \right) \prod\limits_{v \in \C^{\rm u}(\rho)} \mbb{P} \left( \ell_{T_{\leq t-1}}(v) | \ell(\rho) = j \right)} \\
	& = \log \frac{X_{1}^{(i)}(\ell_{T_{1}(v)})}{X_{1}^{(j)}(\ell_{T_{1}(v)})} + \sum_{v \in \C^{\rm u}(\rho)} \log \frac{\theta \cdot \mbb{P} \left( \ell_{T_{\leq t-1}}(v) | \ell(v) = i \right) + \frac{1-\theta}{k} \cdot \sum_{l \in [k]} \mbb{P} \left( \ell_{T_{\leq t-1}}(v) | \ell(v) = l \right) }{\theta \cdot \mbb{P} \left( \ell_{T_{\leq t-1}}(v) | \ell(v) = j \right) + \frac{1-\theta}{k} \cdot \sum_{l \in [k]} \mbb{P} \left( \ell_{T_{\leq t-1}}(v) | \ell(v) = l \right)}\\
	& = \log \frac{X_{1}^{(i)}(\ell_{T_{1}(v)})}{X_{1}^{(j)}(\ell_{T_{1}(v)})} + \sum_{v \in \C^{\rm u}(\rho)} \log \frac{ 1 + \theta X_{t-1}^{(i)}(\ell_{T_{\leq t-1}(v)}) }{ 1 + \theta X_{t-1}^{(j)}(\ell_{T_{\leq t-1}(v)}) } .
	\end{align*}
\end{proof}

\bigskip

Now we are ready to prove the main theoretical results. First, we focus on the $k=2$ case and prove the broadcasting tree version of Theorem~\ref{thm:AMP.k=2} and Theorem~\ref{thm:low.k=2}, under the assumption the tree is regular. Later, based on these two theorems, Theorem~\ref{thm:sbm.k=2} for p-SBM ($k=2$) is proved.

\begin{proof}[Proof of Theorem~\ref{thm:AMP.k=2}]
	We focus on a regular tree where each node has $(1-\delta)d$ unlabeled children and $\delta d$ labeled children.
	For $t=1$, the results follow from Hoeffding's inequality directly because $$ M_1(\ell_{T_{1}(\rho)}) = \left(N_{\C^{\rm l}(\rho)}(+) - N_{\C^{\rm l}(\rho)}(-) \right) \log \frac{1+\theta}{1-\theta} . $$ Let us use induction to prove the remaining claim. Assume for tree with depth $t-1$ rooted from $u$, 
	for any $\lambda > 0$ 
	\begin{align*}
		\mathbb{E}\left[ e^{\lambda M_{t-1}(\ell_{T_{\leq t-1}(u)})} | \ell(u) = + \right] \leq e^{\lambda \mu_{t-1}} \cdot e^{\frac{\lambda^2}{2} \sigma_{t-1}^2 }, \\
		\mathbb{E}\left[ e^{\lambda M_{t-1}(\ell_{T_{\leq t-1}(u)})} | \ell(u) = - \right] \leq e^{-\lambda \mu_{t-1}} \cdot e^{\frac{\lambda^2}{2} \sigma_{t-1}^2 }. 
	\end{align*}
	These will further imply, conditionally on $\ell(u) = +$, 
		\begin{align*}
			M_{t-1}(\ell_{T_{\leq t-1}(u)}) \in \mu_{t-1} \pm x \cdot \sigma_{t-1}; 
		\end{align*}
		and conditionally on $\ell(v) = -$, 
		\begin{align*}
			M_{t-1}(\ell_{T_{\leq t-1}(u)}) \in - \mu_{t-1} \pm x \cdot \sigma_{t-1}; 
		\end{align*}
		both with probability at least $1-2\exp(x^2/2)$.
		Now, recall the recursion for AMP: 
	\begin{align*}
		M_{t} (\ell_{T_{\leq t}(v)}) = M_1(\ell_{T_{1}(v)}) + \theta \cdot \sum_{u \in \C^{\rm u}(v)} M_{t-1}(\ell_{T_{\leq t-1}(u)}).
	\end{align*}
	For the moment generating function we have 
	\begin{align*}
		&\quad \mathbb{E}\left[ e^{\lambda M_{t}(\ell_{T_{\leq t}(v)})} | \ell(v) = + \right] \\
		& \leq e^{\lambda \left(\theta \delta d \cdot \log\frac{1+\theta}{1-\theta}\right)} e^{\frac{\lambda^2}{2} \left( \sqrt{\delta d} \cdot \log \frac{1+\theta}{1-\theta} \right)^2} \cdot \prod_{u \in \C^{\rm u}(v)}\mathbb{E}\left[ e^{\lambda \theta M_{t-1}(\ell_{T_{\leq t-1}(u)})} | \ell(v) = + \right] \\
		& = e^{\lambda \mu_{1}} e^{\frac{\lambda^2}{2} \sigma_{1}^2 } \cdot \prod_{u \in \C^{\rm u}(v)}\mathbb{E}\left[ e^{\lambda \theta M_{t-1}(\ell_{T_{\leq t-1}(u)})} | \ell(v) = + \right]. 
	\end{align*}
	The last term in the previous equation can be written as
	\begin{align}
		& \quad \mathbb{E}\left[ e^{\lambda \theta M_{t-1}(\ell_{T_{\leq t-1}(u)})} | \ell(v) = + \right] \nonumber \\
		& \leq e^{\frac{\lambda^2 \theta^2}{2} \sigma_{t-1}^2 } \cdot \left\{ \frac{1+\theta}{2} e^{\lambda \theta \mu_{t-1}} + \frac{1-\theta}{2}e^{-\lambda \theta \mu_{t-1}} \right\} \label{eq:hoeff1}\\
		& \leq e^{\frac{\lambda^2 \theta^2}{2} \sigma_{t-1}^2 } \cdot e^{\lambda \theta \left(\frac{1+\theta}{2} \mu_{t-1} - \frac{1-\theta}{2} \mu_{t-1} \right) } \cdot e^{\frac{\lambda^2 \theta^2}{2} \mu_{t-1}^2} \label{eq:hoeff2}\\
		& = e^{\frac{\lambda^2 \theta^2}{2} \sigma_{t-1}^2 } \cdot e^{\lambda \theta^2 \mu_{t-1} } \cdot e^{\frac{\lambda^2 \theta^2}{2} \mu_{t-1}^2} \nonumber 
	\end{align}
	where equation \eqref{eq:hoeff1} to \eqref{eq:hoeff2} relies on Hoeffding's lemma: for a random variable $Y = \theta \mu_{t-1}$ with probability $\frac{1+\theta}{2}$ and $Y = -\theta \mu_{t-1}$ with probability $\frac{1-\theta}{2}$, $$\Psi_Y(\lambda)= \mbb{E} e^{\lambda Y} \leq e^{\lambda \mbb{E}Y} e^{\frac{\lambda^2}{2} \theta^2 \mu_{t-1}^2} = e^{\lambda \left( \frac{1+\theta}{2} \theta \mu_{t-1} - \frac{1- \theta}{2} \theta \mu_{t-1} \right)} e^{\frac{\lambda^2}{2} \theta^2 \mu_{t-1}^2}.$$ Thus 
	\begin{align*}
		&\quad \mathbb{E}\left[ e^{\lambda M_{t}(\ell_{T_{\leq t}(v)})} | \ell(v) = + \right] \\
		& \leq e^{\frac{\lambda^2}{2} \sigma_{1}^2 } \cdot e^{\lambda \mu_{1}} \cdot \left\{ e^{\frac{\lambda^2 \theta^2}{2} \sigma_{t-1}^2 } \cdot e^{\lambda \theta^2 \mu_{t-1} } \cdot e^{\frac{\lambda^2 \theta^2}{2} \mu_{t-1}^2} \right\}^{(1-\delta)d} \\
		& = e^{\lambda (\mu_1 + \alpha \mu_{t-1})} \cdot e^{\frac{\lambda^2}{2}(\sigma_1^2 + \alpha \sigma_{t-1}^2 + \alpha \mu_{t-1}^2)} \\
		& = e^{\lambda \mu_t} \cdot e^{\frac{\lambda^2}{2} \sigma_{t}^2}. 
	\end{align*}
	When $\ell(v) = -$, we have 
	\begin{align*}
		&\quad \mathbb{E}\left[ e^{\lambda M_{t}(\ell_{T_{\leq t}(v)})} | \ell(v) = - \right] \\
		&\leq e^{\frac{\lambda^2}{2} \sigma_{1}^2 } \cdot e^{- \lambda \mu_{1}} \cdot \prod_{u \in \C^{\rm u}(v)}\mathbb{E}\left[ e^{\lambda \theta M_{t-1}(\ell_{T_{\leq t-1}(u)})} | \ell(v) = - \right] \\
		&\leq e^{\frac{\lambda^2}{2} \sigma_{1}^2 } \cdot e^{- \lambda \mu_{1}} \cdot \left\{ e^{\frac{\lambda^2 \theta^2}{2} \sigma_{t-1}^2 } \cdot \left\{ \frac{1+\theta}{2} e^{-\lambda \theta \mu_{t-1}} + \frac{1-\theta}{2}e^{\lambda \theta \mu_{t-1}} \right\} \right\}^{(1-\delta)d} \\
		& \leq e^{\frac{\lambda^2}{2} \sigma_{1}^2 } \cdot e^{- \lambda \mu_{1}} \cdot \left\{ e^{\frac{\lambda^2 \theta^2}{2} \sigma_{t-1}^2 } \cdot e^{-\lambda \theta^2 \mu_{t-1} } \cdot e^{\frac{\lambda^2 \theta^2}{2} \mu_{t-1}^2} \right\}^{(1-\delta)d} \\
		& = e^{-\lambda \mu_t} \cdot e^{\frac{\lambda^2}{2} \sigma_{t}^2}. 
	\end{align*}
	This completes the proof.
\end{proof}

\bigskip

\begin{proof}[Proof of Theorem~\ref{thm:low.k=2}]
	\label{pf:low.k=2}
	Define the measure $\mu_{\ell_{T_{\leq t}(\rho)}}^{+}$ on the revealed labels, for a depth $t$ tree rooted from $\rho$ with label $\ell(\rho) = +$ (and similarly define $\mu_{\ell_{T_{\leq t}(\rho)}}^{-}$). We have the following recursion formula
\begin{align*}
	\mu_{\ell_{T_{\leq t}(\rho)}}^{+} = \left( \frac{1+\theta}{2} \right)^{N_{\C^l(\rho)}} \left( \frac{1-\theta}{2} \right)^{\delta d - N_{\C^l(\rho)}} \prod_{v \in \C^u(\rho)} \left[ \frac{1+\theta}{2} \cdot \mu_{\ell_{\leq t-1}(v)}^{+} + \frac{1-\theta}{2} \cdot \mu_{\ell_{\leq t-1}(v)}^{-}  \right]. 
\end{align*}
Recall that the $\chi^2$ distance between two absolute continuous measures $\mu(x),\nu(x)$ is
$$
d_{\chi^2}(\mu,\nu) = \int \frac{\mu^2}{\nu} dx - 1, 
$$
and we have the total variation distance between these two measures is upper bounded by the $\chi^2$ distance
\begin{align*}
	d_{TV} \left( \mu_{\ell_{T_{\leq t}(\rho)}}^{+},\mu_{\ell_{T_{\leq t}(\rho)}}^{-} \right) \leq \sqrt{d_{\chi^2} \left( \mu_{\ell_{T_{\leq t}(\rho)}}^{+},\mu_{\ell_{T_{\leq t}(\rho)}}^{-} \right) }.
\end{align*}
Let us upper bound the symmetric version of $\chi^2$ distance defined as
\begin{align*}
d^t_{\chi^2} := \max \left\{d_{\chi^2} \left( \mu_{\ell_{T_{\leq t}(\rho)}}^{+},\mu_{\ell_{T_{\leq t}(\rho)}}^{-} \right), ~ d_{\chi^2} \left( \mu_{\ell_{T_{\leq t}(\rho)}}^{-},\mu_{\ell_{T_{\leq t}(\rho)}}^{+} \right) \right\} .
\end{align*}
Note that
\begin{align*}
	&d_{\chi^2} \left( \mu_{\ell_{T_{\leq t}(\rho)}}^{+},\mu_{\ell_{T_{\leq t}(\rho)}}^{-} \right) \\
	&= \left( 1 + \frac{4\theta^2}{1-\theta^2} \right)^{\delta d} \cdot \left[ 1 + d_{\chi^2} \left( \frac{1+\theta}{2} \cdot \mu_{\ell_{\leq t-1}(v)}^{+} + \frac{1-\theta}{2} \cdot \mu_{\ell_{\leq t-1}(v)}^{-}, \frac{1+\theta}{2} \cdot \mu_{\ell_{\leq t-1}(v)}^{-} + \frac{1-\theta}{2} \cdot \mu_{\ell_{\leq t-1}(v)}^{+} \right) \right]^{(1-\delta)d} - 1,
\end{align*}
and for the RHS, we have the expression
\begin{align*}
	d_{\chi^2} \left( \frac{1+\theta}{2} \cdot \mu_{\ell_{\leq t-1}(v)}^{+} + \frac{1-\theta}{2} \cdot \mu_{\ell_{\leq t-1}(v)}^{-}, \frac{1+\theta}{2} \cdot \mu_{\ell_{\leq t-1}(v)}^{-} + \frac{1-\theta}{2} \cdot \mu_{\ell_{\leq t-1}(v)}^{+} \right) = \theta^2 \int \frac{(\mu_{\ell_{\leq t-1}(v)}^{+}  - \mu_{\ell_{\leq t-1}(v)}^{-})^2}{\frac{1+\theta}{2} \cdot \mu_{\ell_{\leq t-1}(v)}^{-} + \frac{1-\theta}{2} \cdot \mu_{\ell_{\leq t-1}(v)}^{+} } d x.
\end{align*}
Recalling Jensen's inequality, RHS of the above equation is further upper bounded by
\begin{align*}
	{\rm RHS} & \leq \theta^2 \int (\mu_{\ell_{\leq t-1}(v)}^{+}  - \mu_{\ell_{\leq t-1}(v)}^{-})^2 \cdot \left[ \frac{1+\theta}{2} \cdot \frac{1}{\mu_{\ell_{\leq t-1}(v)}^{-}} + \frac{1-\theta}{2} \cdot \frac{1}{\mu_{\ell_{\leq t-1}(v)}^{+}}  \right] dx \\
	& = \theta^2  \left[ \frac{1+\theta}{2} d_{\chi^2}\left(\mu_{\ell_{T_{\leq t}(\rho)}}^{+},\mu_{\ell_{T_{\leq t}(\rho)}}^{-}\right) + \frac{1-\theta}{2} d_{\chi^2}\left(\mu_{\ell_{\leq t-1}(v)}^{-},\mu_{\ell_{\leq t-1}(v)}^{+}\right)  \right].
\end{align*}
Thus 
\begin{align*}
  &\max \left\{	d_{\chi^2} \left( \frac{1+\theta}{2} \cdot \mu_{\ell_{\leq t-1}(v)}^{+} + \frac{1-\theta}{2} \cdot \mu_{\ell_{\leq t-1}(v)}^{-}, \frac{1+\theta}{2} \cdot \mu_{\ell_{\leq t-1}(v)}^{-} + \frac{1-\theta}{2} \cdot \mu_{\ell_{\leq t-1}(v)}^{+} \right),  \right.\\
  &\quad \quad \quad  \left.   d_{\chi^2} \left( \frac{1+\theta}{2} \cdot \mu_{\ell_{\leq t-1}(v)}^{-} + \frac{1-\theta}{2} \cdot \mu_{\ell_{\leq t-1}(v)}^{+}, \frac{1+\theta}{2} \cdot \mu_{\ell_{\leq t-1}(v)}^{+} + \frac{1-\theta}{2} \cdot \mu_{\ell_{\leq t-1}(v)}^{-} \right) \right\} \\
  \leq & \theta^2 \max\left\{ d_{\chi^2}\left(\mu_{\ell_{\leq t-1}(v)}^{+},\mu_{\ell_{\leq t-1}(v)}^{-}\right) , d_{\chi^2}\left(\mu_{\ell_{\leq t-1}(v)}^{-},\mu_{\ell_{\leq t-1}(v)}^{+}\right)  \right\} =\theta^2 d_{\chi^2}^{t-1} .
\end{align*}
Therefore, we have
\begin{align*}
	\log\left(1+
	d_{\chi^2}^t \right) \leq \delta d \cdot \log \left( 1 + \frac{4\theta^2}{1 - \theta^2} \right) + (1-\delta) d  \cdot \log \left(1 + \theta^2 \cdot d_{\chi^2}^{t-1} \right).
\end{align*}	
If $(1-\delta) \theta^2 d<1$, denote the fixed point of the above equation as $c^*$ (the existence is manifested by the following bound ~\eqref{eq:fix-point}), i.e.,
$$
\log (1+ c^*) = \delta d \cdot \log \left( 1 + \frac{4\theta^2}{1 - \theta^2} \right) + (1-\delta)d \cdot \log (1+\theta^2 \cdot c^*).
$$
Due to the fact that $x - \frac{1}{2}x^2 \leq \log(1+x)\leq x$, we have the following upper bound
\begin{align}
&c^* - \frac{1}{2} (c^*)^2 \leq \log (1+ c^*) = \delta d \cdot \log \left( 1 + \frac{4\theta^2}{1 - \theta^2} \right) + (1-\delta)d \cdot \log (1+\theta^2 c^*) \leq \delta d \cdot \log \left( 1 + \frac{4\theta^2}{1 - \theta^2} \right) + (1-\delta)\theta^2 d \cdot c^* \\
&\quad  \quad  \text{and thus}~~~c^* \leq  \frac{\delta d \cdot \log \left( 1 + \frac{4\theta^2}{1 - \theta^2} \right)}{1 - (1-\delta)\theta^2 d}  \cdot \frac{2}{1+\sqrt{1 - 2\frac{\delta d \cdot \log \left( 1 + \frac{4\theta^2}{1 - \theta^2} \right)}{(1 - (1-\delta)\theta^2 d)^2}}} \label{eq:fix-point}.
\end{align}
If we have
$$
d^{t-1}_{\chi^2} \leq c^*
$$
it is easy to see that 
\begin{align*}
	\log\left(1+
	d_{\chi^2}^t \right) &\leq \delta d \cdot \log \left( 1 + \frac{4\theta^2}{1 - \theta^2} \right) + (1-\delta) d  \cdot \log \left(1 + \theta^2 \cdot d_{\chi^2}^{t-1} \right) \\
	&\leq \delta d \cdot \log \left( 1 + \frac{4\theta^2}{1 - \theta^2} \right) + (1-\delta)d \cdot \log (1+\theta^2 \cdot c^*) = \log (1+c^*),
\end{align*}
which implies $d^{t}_{\chi^2} \leq c^*$. Therefore we only need to verify $d^1_{\chi^2} \leq c^*$, which is trivial. 
Thus we have the bound,
\begin{align*}
	\limsup_{t \rightarrow \infty}~ 
	d_{\chi^2}^t  \leq c^* \leq 2\frac{\delta d \cdot \log \left( 1 + \frac{4\theta^2}{1 - \theta^2} \right)}{1 - (1-\delta)\theta^2 d},
\end{align*}
provided $\frac{2\delta d \cdot \log \left( 1 + \frac{4\theta^2}{1 - \theta^2} \right)}{(1-(1-\delta)\theta^2 d)^2} < 1 $.
So far we have proved
\begin{align*}
	\limsup_{t\rightarrow \infty}~ d_{TV}^{t} \leq  \limsup_{t\rightarrow \infty} ~ \left( d_{\chi^2}^{t}\right)^{1/2} \leq \left\{ \frac{2\delta d \cdot \log \left( 1 + \frac{4\theta^2}{1 - \theta^2} \right)}{1 - (1-\delta)\theta^2 d}  \right\}^{1/2}.
\end{align*}
Through Le Cam's Lemma,  the error rate, for all local algorithms, is at least 
$$
\inf_{\Phi} \sup_{l \in \{+,-\}} \mbb{P}_l (\Phi \neq l) \geq \frac{1 - \left\{ \frac{2\delta d \cdot \log \left( 1 + \frac{4\theta^2}{1 - \theta^2} \right)}{1 - (1-\delta)\theta^2 d}  \right\}^{1/2} }{2}.
$$

\end{proof}

\bigskip

Now we are ready to prove Theorem~\ref{thm:sbm.k=2} with the help of Lemma~\ref{lma:coupling.tree}. The main task in the proof of Theorem~\ref{thm:sbm.k=2} is to extend Theorems~\ref{thm:AMP.k=2} and ~\ref{thm:low.k=2} from the regular tree case to the general branching tree case with the matching branching number. Since the general branching tree is a random tree with a varied structure, we need to prove that versions of upper and lower bounds from the earlier proofs hold almost surely for this random tree. The proof requires new ideas employing different notions of the ``branching number'' \citep{lyons2005probability}. 

\begin{proof}[Proof of Theorem~\ref{thm:sbm.k=2}]
	For the  regular tree, the upper and lower bounds have been already proved in Theorem~\ref{thm:AMP.k=2} and Theorem~\ref{thm:low.k=2}. Instead of a $(1-\delta)d$ regular tree, we need to prove the theorem for Galton-Watson tree with offspring distribution ${\sf Poisson}((1-\delta)d)$ (recall the a.a.s. coupling between local tree of SBM and Galton-Watson tree from Lemma~\ref{lma:coupling.tree}). Note these two trees share the same branching number ${\rm br}(T) = (1-\delta)d$ almost surely.  We will use the following  equivalent definitions of the branching number \citep{lyons2005probability}  for a tree $T$ rooted at $\rho$:
	\begin{itemize}
		\item[Flow]: a \textit{flow} is non-negative function on the edges of $T$, with the property that for each non-root vertex $x$, ${\sf flow}((z,x)) = \sum_{i=1}^d {\sf flow}((x,y_i))$ if $x$ has parent $z$ and children $y_1,\ldots, y_d$. We say that ${\sf flow}(e)$ is the amount of water flowing along edge $e$ with the total amount of water being ${\rm flow}(T) = \sum_{v \in \C(\rho)} {\sf flow}((\rho, v))$. Consider the following restriction on a flow: given $\lambda\geq 1$, ${\sf flow}((x,z)) \leq \lambda^{-n}$ for an edge $(x,z)$ with distance $n$ from $\rho$. \textit{Branching number ${\sf br}(T)$} of a tree $T$ is the supremum over $\lambda$ that admits a positive total amount of water ${\rm flow}_{\lambda}(T)>0$ to flow through $T$. Denote for a node $v$ with parent $u$, the ${\sf flow}(v):= {\sf flow}((u,v))$.
		\item[Cutset]: define a \textit{cutset} to be a set whose removal leaves the root $\rho$ in a finite component. \textit{Branching number ${\sf br}(T)$} of a tree $T$ can be defined as ${\rm br}(T) = \inf\left\{ \lambda>0: ~~\inf\limits_{\text{cutset}~C} \sum_{x \in C} \lambda^{-|x|} = 0 \right\}$.
	\end{itemize}

	Let us fix a particular node $\rho \in V^{\rm u}$ in p-SBM and focus on its depth-$t$ local tree $T_{\leq t}(\rho)$.  For $T_{\leq t}(\rho)$, denote the number of its labeled children at depth $i$ as $N_i(\rho)$. Consider the case $(1-\delta)\theta^2d>1$.
	Exactly as the method in Proof of Theorem~\ref{thm:AMP.k=2}, we have the following recursion for cumulant-generating function $K(\lambda)$ (where the expectation is taken over the label broadcasting process, conditionally on the Galton-Watson tree structure)
	\begin{align*}
	K_{ M_{t}(\rho) }(\lambda) &= K_{ M_{1}(\rho) }(\lambda) + \sum_{v \in \C^{\rm u}(\rho)} K_{ M_{t-1}(v) }(\theta \lambda) 
	\end{align*}
	which implies the following mean $\mu_t(\rho)$ and deviation $\sigma_t^2(\rho)$ bounds for message $M_{t}(\rho)$
	\begin{align*}
	\mu_t(\rho) &= N_1(\rho) \cdot \theta \log \frac{1+\theta}{1-\theta} + \theta^2 \cdot \sum_{v \in \C^{\rm u}(\rho)} \mu_{t-1}(v) ,\\
	\sigma_t^2(\rho) &= N_1(\rho) \cdot  \log^2 \frac{1+\theta}{1-\theta} + \theta^2 \cdot \sum_{v \in \C^{\rm u}(\rho)} [\mu_{t-1}^2(v) + \sigma_{t-1}^2(v)].
	\end{align*}
	Now we can easily find the following expression via the above equation
	\begin{align*}
	\mu_t(\rho) &= \left [ \sum_{i=1}^t \theta^{2(i-1)} N_i(\rho)   \right] \cdot \theta \log \frac{1+\theta}{1-\theta},\\
	\sigma_t^2(\rho) &= \left [ \sum_{i=1}^t \theta^{2(i-1)} N_i(\rho)   \right] \cdot  \log^2 \frac{1+\theta}{1-\theta} + \sum_{i=1}^{t-1}\theta^{2i} \sum_{v \in \C^{(i)}(\rho)} \mu^2_{t-i}(v).
	\end{align*}
	
	For the Galton-Watson tree (with ${\sf Poisson}((1-\delta)d$) off-spring distribution), we have $N_i(\rho)$ with growth rate $\asymp \delta d [(1-\delta)d]^{i-1}$, due to Kesten-Stigum Theorem  \citep{lyons2005probability}. Moreover, it can be shown that $\mu_t(\rho) \asymp [(1-\delta)\theta^2 d]^t$, as follows. Recall the flow definition of branching number for Galton-Watson tree, as the maximum $\lambda$ such that it admits a positive ${\sf flow}_{\lambda}(T)$. Thus the following representation of $\mu_t$ in terms of flow holds for any $\lambda < {\rm br}(T) =  (1-\delta)d$
	\begin{align}
	\mu_t(\rho) &= \left [ \sum_{i=1}^t [\theta^2 \lambda]^{i-1} \cdot \left(|N_{i}(\rho)|  \lambda^{-(i-1)}  \right) \right] \cdot \theta  \log \frac{1+\theta}{1-\theta} \nonumber \\
	&\geq  \left [ \sum_{i=1}^t [\theta^2 \lambda]^{i-1} {\sf flow}_{\lambda}(T)  \right] \cdot \delta d \theta \log \frac{1+\theta}{1-\theta} \nonumber \\
	& =\frac{ [\theta^2 \lambda]^t - 1}{\theta^2 \lambda - 1} {\sf flow}_{\lambda}(T) \cdot \delta d \theta \log \frac{1+\theta}{1-\theta}  = \Omega([\theta^2 \lambda]^t) 	\label{eq:water.flow}
	\end{align}
	due to the fact $|N_{i}(\rho)|  \lambda^{-(i-1)}  \geq \delta d \cdot {\sf flow}_{\lambda}(T)$, for any layer $i-1$. Taking $\lambda \uparrow (1-\delta)d$ ensures us that $ \mu_t(\rho) \asymp [(1-\delta)\theta^2 d]^t$ holds almost surely for Galton-Watson tree. 
	
	Now let us bound $\sigma^2_t/\mu^2_t$. In the regular tree case, we have shown $\lim_{t\rightarrow \infty} \frac{\sigma^2_t(\rho)}{\mu_t^2(\rho)} = \frac{1}{(1-\delta) \theta^2 d - 1}$. Here we want to show $\lim_{t\rightarrow \infty} \frac{\sigma^2_t(\rho)}{\mu_t^2(\rho)} \leq C \cdot \frac{1}{(1-\delta) \theta^2 d - 1}$. Conductance is a positive function ${\sf cond}(e)$ on the edges of $T$. Recall the energy definition ${\sf enrg}(T) := \sum_{e} [{\sf flow}(e)]^2/{\sf cond}(e)$. In addition to the earlier definitions, the branching number is the largest $\lambda$ such that the electric current flows with finite energy ${\sf enrg}_\lambda(T)$, given $\lambda^{-n}$ is the conductance of edges ${\sf cond}(e)$ at distance $n$ from the root of $T$. 
	For any $\lambda<{\rm br}(T) =(1-\delta)d$, we have
	\begin{align}
	\label{eq:elect.flow}
	\frac{\sigma^2_t(\rho)}{\mu_t^2(\rho)} & =  \frac{1}{\theta^2\delta d} \cdot \epsilon(t) + \sum_{i=1}^{t-1} [\theta^2 \lambda]^{-i} \cdot  \xi(i,t), \quad \text{where} \nonumber\\
	\epsilon(t) & := \frac{1}{\sum\limits_{i=1}^t \theta^{2(i-1)} N_i(\rho)  } \asymp \frac{1}{[(1-\delta)\theta^2d]^t}=o_t(1) \quad \text{due to equation~\eqref{eq:water.flow}} \\
	\xi(i,t) & := \frac{ \sum\limits_{v \in \C^{(i)}(\rho)} [\theta^{2i} \mu_{t-i}(v)]^2 }{\mu_t^2(\rho)}  \cdot \frac{1}{\lambda^{-i}} \leq \sum_{v \in \C^{(i)}(\rho)} [{\sf flow}(v)]^2\cdot \frac{1}{{\sf cond}(v)} < {\sf enrg}_{\lambda}(T) <\infty \label{eq:xi}\\
	\text{thus} ~~&\sum_{i=1}^{t-1} [\theta^2 \lambda]^{-i} \cdot  \xi(i,t) \leq \frac{1}{\theta^2\lambda -1} {\sf enrg}_{\lambda}(T) \nonumber.
	\end{align}
	Equation~\eqref{eq:xi} is due to the electric current ${\sf flow}(v) = \frac{\theta^{2i} \mu_{t-i}(v)}{\sum_{v \in \C^{(i)}(\rho)} \theta^{2i} \mu_{t-i}(v)}\geq \frac{\theta^{2i} \mu_{t-i}(v)}{\mu_t(\rho)}$. It is easy to verify that this flow satisfies the definition and that $\sum_{v \in \C^{(i)}(\rho)} {\sf flow}(v) = 1$ is the unit flow. In view of  \eqref{eq:elect.flow}, we have almost surely
	$$
	\lim_{t\rightarrow \infty} \frac{\sigma^2_t(\rho)}{\mu_t^2(\rho)} \leq C \cdot \frac{1}{(1-\delta) \theta^2 d - 1}.
	$$	
For a regular tree, we have $C \equiv 1$. In summary, conditionally on non-extinction, label recovery succeeds with probability at least $1 - \exp\left(  \frac{{\sf SNR}-1}{2C(1 + o(1))}\right)$. This establishes the upper bound.
	
	Now let us prove the lower bound. Consider the case $(1-\delta)\theta^2d<1$. Recall the proof of Theorem~\ref{thm:low.k=2} and define $$D_{T_{\leq t}(\rho)} := \max \left\{ d_{\chi^2} \left( \mu_{\ell_{T_{\leq t}(\rho)}}^{+},\mu_{\ell_{T_{\leq t}(\rho)}}^{-} \right) , d_{\chi^2} \left( \mu_{\ell_{T_{\leq t}(\rho)}}^{-},\mu_{\ell_{T_{\leq t}(\rho)}}^{+} \right)  \right\}$$ (abbreviate as $D(\rho)$ when there is no confusion), we have the following recursion
	\begin{align*}
	\log (1+D_{T_{\leq t}(\rho)} )   \leq   N_1(\rho) \cdot \log \left(1+\frac{4\theta^2}{1-\theta^2}\right) + \theta^2 \sum_{v \in \C^{\rm u}(\rho)} \frac{\log (1+\theta^2 \cdot D_{T_{\leq t-1}(v)})}{\theta^2}.
	\end{align*}	
	Invoke the following fact,
	\begin{align*}
		\frac{\log (1+\theta^2 x)}{\theta^2} \leq (1+\eta) \log(1+x) \quad \text{for all}~~0 \leq x \leq \eta,~\forall \theta,
	\end{align*}
	whose proof is in one line
	$$
	\frac{\log (1+\theta^2 x)}{\theta^2} \leq x \leq (1+\eta) \frac{x}{1+x} \leq (1+\eta) \log(1+x).
	$$
	Thus if $D_{t-1} \leq \eta$, then the following holds
	\begin{align}
		\label{eq:low.recusion}
		\log (1+D_{T_{\leq t}(\rho)} )   \leq   N_1(\rho) \cdot \log \left(1+\frac{4\theta^2}{1-\theta^2}\right) + (1+\eta)\theta^2 \sum_{v \in \C^{\rm u}(\rho)} \log (1+ D_{T_{\leq t-1}(v)}).
	\end{align}
	Denoting
	$$
	d_{T_{\leq t}(\rho)}   := \log (1+D_{T_{\leq t}(\rho)} ) ,
	$$
	Equation~\eqref{eq:low.recusion} becomes
	$$
	d_{T_{\leq t}(\rho)} \leq  N_1(\rho) \cdot \log \left(1+\frac{4\theta^2}{1-\theta^2}\right) + (1+\eta)\theta^2 \sum_{v \in \C^{\rm u}(\rho)} d_{T_{\leq t-1}(v)}.
	$$
	
	We will need the definition of branching number ${\rm br}(T)  = (1-\delta)d$ via cutset.
	\begin{lemma}[\cite{pemantle1999robust}, Lemma 3.3]
		Assume $ {\rm br}(T) < \lambda$. Then for all $\epsilon>0$, there exists a cutset $C$ such that
		\begin{align}
			\label{eq:cutset.all}
		\sum_{x \in C} \left( \frac{1}{\lambda} \right)^{|x|} \leq \epsilon
		\end{align}
		and for all $v$ such that $|v| \leq \max_{x \in C} |x|$,
		\begin{align}
			\label{eq:cutset.partial}
		\sum_{x \in C \cap T(v)} \left( \frac{1}{\lambda} \right)^{|x|-|v|} \leq 1.
		\end{align}
		Here the notation $|v|$ denotes the depth of $v$.
	\end{lemma}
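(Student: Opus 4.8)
\emph{Plan of proof.} My strategy would be to start from an inexpensive cutset and then ``promote'' towards the root the pieces that are locally too heavy, so as to enforce~\eqref{eq:cutset.partial} without spoiling~\eqref{eq:cutset.all}. First, since ${\rm br}(T)<\lambda$, the cutset description of the branching number recalled above supplies, for the given $\epsilon>0$, a cutset $\Pi$ --- which one may take to be a finite antichain, the trees in question being locally finite --- with $\sum_{x\in\Pi}\lambda^{-|x|}<\epsilon$; after shrinking $\epsilon$ one may also assume $\epsilon\le 1$. Next I would attach to each vertex $w$ the local weight $m(w):=\sum_{x\in\Pi\cap T(w)}\lambda^{-(|x|-|w|)}$, so that $m(\rho)<\epsilon\le 1$, $m(x)=1$ for $x\in\Pi$, and $m\equiv 0$ strictly below $\Pi$. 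Calling $w$ \emph{heavy} when $m(w)>1$, this makes $\rho$ non-heavy and forces every heavy vertex to lie strictly between $\rho$ and $\Pi$, hence at bounded depth.

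Then I would define $C$ as the union of (i) the heavy vertices all of whose proper ancestors are non-heavy (the ``first heavy vertices'') and (ii) the vertices $x\in\Pi$ all of whose proper ancestors are non-heavy, and check that $C$ is a legitimate cutset: it is an antichain (no vertex of type (i) or (ii) can sit below a first-heavy vertex, nor below a $\Pi$-vertex of type (ii), since $\Pi$ is an antichain), it has bounded depth, and every ray from $\rho$ meets it --- the ray hits $\Pi$ at some $x_\xi$, and either the path $\rho\to x_\xi$ contains a heavy vertex, whose topmost occurrence is then of type (i), or it does not, in which case $x_\xi$ is of type (ii). Local finiteness then guarantees that deleting a bounded-depth antichain meeting every ray leaves $\rho$ in a finite component.

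For the two estimates the one fact I would use is that, for heavy $w$, replacing $\Pi\cap T(w)$ by the single vertex $w$ never increases cost: $\lambda^{-|w|}\le\lambda^{-|w|}m(w)=\sum_{x\in\Pi\cap T(w)}\lambda^{-|x|}$. Because the subtrees $T(w)$ over distinct first-heavy vertices are pairwise disjoint and the type-(ii) vertices avoid them, this gives, for every $v$, the inequality $\sum_{x\in C\cap T(v)}\lambda^{-(|x|-|v|)}\le m(v)$ when $v$ is not on or below a first-heavy vertex, whereas when $v$ is on or below such a vertex the same sum equals $1$ (if $v$ is itself first-heavy, since then $C\cap T(v)=\{v\}$) or $0$. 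In the first case $v$ cannot be heavy, since otherwise the topmost heavy vertex on the path $\rho\to v$ would be a first-heavy ancestor of $v$; hence $m(v)\le 1$ and the sum is $\le 1$ in every case, which is~\eqref{eq:cutset.partial}, and taking $v=\rho$ (never heavy, never below a first-heavy vertex) gives $\sum_{x\in C}\lambda^{-|x|}\le m(\rho)<\epsilon$, which is~\eqref{eq:cutset.all}.

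The part I expect to be genuinely fiddly rather than hard is pinning down the definition of $C$ so that it is simultaneously an honest antichain cutset, globally no larger than $\Pi$, and locally normalized: the obstacle is that $m$ need not be monotone along a ray --- it may dip below $1$, rise above it, and dip again before attaining the value $1$ on $\Pi$ --- so one must promote exactly the \emph{first} heavy vertex on each ray; promoting all of them would destroy the antichain property, while promoting none would leave~\eqref{eq:cutset.partial} false. Verifying that ``first heavy'' is the right choice is precisely the case split in the preceding paragraph, and beyond that the argument is bookkeeping.
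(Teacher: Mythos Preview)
The paper does not supply its own proof of this lemma; it is quoted verbatim from \cite{pemantle1999robust} and used as a black box inside the proof of Theorem~\ref{thm:sbm.k=2}. So there is nothing in the paper to compare your argument against.

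That said, your argument is correct and is essentially the classical one. Starting from a cheap cutset $\Pi$ furnished by the cutset definition of ${\rm br}(T)$, you define the local mass $m(w)=\sum_{x\in\Pi\cap T(w)}\lambda^{-(|x|-|w|)}$ and replace $\Pi$ below each \emph{first} heavy vertex by that vertex itself. The three verifications --- that $C$ is a finite antichain meeting every ray, that promoting a heavy $w$ never increases cost (because $\lambda^{-|w|}\le\lambda^{-|w|}m(w)$), and that any $v$ not on or below a first-heavy vertex is itself non-heavy --- are all sound, and together they give \eqref{eq:cutset.partial} for every relevant $v$ and \eqref{eq:cutset.all} at $v=\rho$. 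The only point worth a remark is your passing claim that $\Pi$ may be taken to be a finite antichain: this is legitimate because removing $\Pi$ leaves $\rho$ in a finite component, and in a locally finite tree the minimal cutset contained in such a $\Pi$ (the vertices of $\Pi$ adjacent to that component) is finite; one can then trim to an antichain without increasing the sum.
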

	Fix any $\lambda$ such that$\frac{1}{\theta^2}> \lambda >{\rm br}(T)  = (1-\delta)d$ (this is doable because $(1-\delta)\theta^2d<1$). 
	Define function $g_{\alpha}(\eta) = \frac{\eta}{1+\eta}[1-(1+\eta)\alpha]$ for $\alpha<1$, clearly it is a monotone increasing function in $\eta <\sqrt{1/\alpha}-1$. Thus the inverse $g^{-1}_{\alpha}(y)$ exists if $
	y < g_{\alpha}(\sqrt{1/\alpha}-1)  = (1-\sqrt{\alpha})^2.
	$
	Under the assumption
	\begin{align}
	 \delta d \cdot  \log \left(1+\frac{4\theta^2}{1-\theta^2}\right) < (1-\sqrt{\theta^2 \lambda})^2,
	\end{align}
	Choose
	\begin{align*}
	\eta = g^{-1}_{\theta^2 \lambda} \left(  \delta d \cdot  \log \left(1+\frac{4\theta^2}{1-\theta^2}\right) \right)  ~~\text{which implies}~~ \delta d \cdot  \log \left(1+\frac{4\theta^2}{1-\theta^2}\right) = g_{\theta^2 \lambda}  (\eta)
	\end{align*}
	and we know $\eta <\sqrt{1/\theta^2\lambda}-1 \Rightarrow (1+\eta)\theta^2\lambda<1$.
	The reason will be clear in a second.
	
	For any $\epsilon$ small, the above Lemma claims the existence of cutset $C_\epsilon$ such that equations~\eqref{eq:cutset.all} and\eqref{eq:cutset.partial} holds.
	 Let's prove through induction on $\max_{x \in C_\epsilon} |x| - |v|$ that for any $v$ such that $|v| \leq \max_{x \in C_\epsilon} |x|$, we have
	\begin{align}
		\label{eq:induction}
		d_{T_{\leq C_{\epsilon}}(v)} \leq \frac{\eta}{1+\eta} \sum_{x \in C_{\epsilon} \cap T(v)} \left( \frac{1}{\lambda} \right)^{|x|-|v|} \leq \frac{\eta}{1+\eta} .
	\end{align}
	Note for the start of induction $v \in C_{\epsilon}$, $$d_{T_{\leq C_{\epsilon}}(v)} = \delta d \cdot  \log \left(1+\frac{4\theta^2}{1-\theta^2}\right) = g_{\theta^2 \lambda}(\eta) \leq \frac{\eta}{1+\eta} (1 - (1+\eta) \theta^2 \lambda) <  \frac{\eta}{1+\eta}.$$ Now precede with the induction, assume for $v$ such that $\max_{x \in C_\epsilon} |x| - |v| = t-1$ equation~\eqref{eq:induction} is satisfied, let's prove for $\rho: \max_{x \in C_\epsilon} |x| - |\rho| = t$.  Due to the fact for all $v \in \C^{\rm u}(\rho)$, $d_{T_{\leq C_{\epsilon}}(v)}  \leq \frac{\eta}{1+\eta} \Rightarrow D_{T_{\leq C_{\epsilon}}(v)}  \leq \eta$, we can recall the linearized recursion
	\begin{align*}
		d_{T_{\leq C_{\epsilon}}(\rho)} &\leq  N_1(\rho) \cdot \log \left(1+\frac{4\theta^2}{1-\theta^2}\right) + (1+\eta)\theta^2 \sum_{v \in \C^{\rm u}(\rho)} d_{T_{\leq C_{\epsilon}}(v)} \\
		& \leq N_1(\rho) \cdot \log \left(1+\frac{4\theta^2}{1-\theta^2}\right) + (1+\eta)\theta^2 \sum_{v \in \C^{\rm u}(\rho)} \left[  \frac{\eta}{1+\eta} \sum_{x \in C_{\epsilon} \cap T(v)} \left( \frac{1}{\lambda} \right)^{|x|-|v|} \right] \\
		& \leq N_1(\rho) \cdot \log \left(1+\frac{4\theta^2}{1-\theta^2}\right) + \frac{\eta}{1+\eta} \cdot (1+\eta)\theta^2\lambda  \sum_{v \in \C^{\rm u}(\rho)} \sum_{x \in C_{\epsilon} \cap T(v)} \left( \frac{1}{\lambda} \right)^{|x|-|v|-1}  \\
		& \leq N_1(\rho) \cdot \log \left(1+\frac{4\theta^2}{1-\theta^2}\right) + \frac{\eta}{1+\eta} \cdot  (1+\eta)\theta^2\lambda  \sum_{x \in C_{\epsilon} \cap T(\rho)} \left( \frac{1}{\lambda} \right)^{|x|-|\rho|}  \\
		& \leq \frac{\eta}{1+\eta} (1 - (1+\eta) \theta^2 \lambda)+\frac{\eta}{1+\eta} \cdot  (1+\eta)\theta^2\lambda \leq \frac{\eta}{1+\eta}.
	\end{align*}
So far we have proved for any $v$, such that $|v| \leq \max_{x \in C_\epsilon} |x|$
\begin{align*}
	d_{T_{\leq C_{\epsilon}}(v)} \leq \frac{\eta}{1+\eta} \sum_{x \in C_{\epsilon} \cap T(v)} \left( \frac{1}{\lambda} \right)^{|x|-|v|} \leq \frac{\eta}{1+\eta}\\
	\text{which implies}\quad D_{T_{\leq C_{\epsilon}}(v)} \leq \eta
\end{align*}
so that the linearized recursion~\eqref{eq:low.recusion} always holds.
	Take $\epsilon \rightarrow 0, \lambda \rightarrow  (1-\delta)d$. Define $t_\epsilon: = \min\{ |x|, x\in C_{\epsilon} \}$, it is also easy to see from equation~\eqref{eq:cutset.all} that 
	$$
	\left( \frac{1}{\lambda} \right)^{t_\epsilon} \leq \sum_{x \in C_\epsilon} \left( \frac{1}{\lambda} \right)^{|x|} \leq \epsilon \Rightarrow t_{\epsilon} > \frac{\log(1/\epsilon)}{\log \lambda} \rightarrow \infty.
	$$
	Putting things together, under the condition
	\begin{align*} 
		\delta d \cdot  \log \left(1+\frac{4\theta^2}{1-\theta^2}\right) \leq (1-\sqrt{(1-\delta)\theta^2 d})^2,
	\end{align*} we have $$\lim_{t \rightarrow \infty} D_{T_{\leq t}(\rho)} = \lim_{\epsilon \rightarrow 0} D_{T_{\leq C_\epsilon}(\rho) }\leq \eta \leq C \cdot \frac{\delta d \log \left(1+\frac{4\theta^2}{1-\theta^2}\right) }{ 1- (1 -\delta)\theta^2 d}.$$
	Here the last step is due to a simple bound on $\eta$ based on the inequality
	$$
	\delta d \cdot  \log \left(1+\frac{4\theta^2}{1-\theta^2}\right) = \frac{\eta}{1+\eta} - \eta \cdot (1-\delta)\theta^2 d > \eta[1 - (1-\delta)\theta^2 d] -\eta^2.
	$$
\end{proof}

\bigskip

\begin{proof}[Proof of Theorem~\ref{thm:k-AMP}]
	
	For $\alpha > 1$:\\
	Use induction analysis. For $t=1$, the result follows from Hoeffding's lemma. Assume results hold for $t-1$, then if above the fraction label is $l$,
	\begin{align*}
		&\quad \mathbb{E}\left[ e^{\lambda M_{t}(\ell_{T_{\leq t}(v)})} | \ell(v) = l \right] \\
		& \leq e^{\frac{\lambda^2}{2} \sigma_1^2} \cdot e^{\lambda \mu_1} \cdot \prod_{u \in \C^{\rm u}(v)}\mathbb{E}\left[ e^{\lambda \theta M_{t-1}(\ell_{T_{\leq t-1}(u)})} | \ell(v) = l \right] \\
		& = e^{\frac{\lambda^2}{2} \sigma_{1}^2 } \cdot e^{\lambda \mu_1} \cdot \prod_{u \in \C^{\rm u}(v)} \left[ \left(\theta + \frac{1-\theta}{k}\right) \cdot e^{\lambda \theta \mu_{t-1}} e^{\frac{\lambda^2 \theta^2 \sigma_{t-1}^2}{2}} + \frac{1-\theta}{k} \cdot e^{-\lambda \theta \mu_{t-1}} e^{\frac{\lambda^2 \theta^2 \sigma_{t-1}^2}{2}} + \frac{(k-2)(1-\theta)}{k} \cdot e^{\frac{\lambda^2 \theta^2 \sigma_{t-1}^2}{2}} \right] \\
		& \leq e^{\lambda \left(\mu_1 + \alpha \mu_{t-1}\right)} \cdot e^{\frac{\lambda^2}{2} \cdot \left( \sigma_1^2 + (1-\delta)d \theta^2 \sigma_{t-1}^2 \right) } \cdot e^{\frac{\lambda^2}{2}\cdot (1-\delta)d\theta^2 \mu_{t-1}^2}\\
		& \leq e^{\lambda \left(\mu_1 + \alpha \mu_{t-1}\right)} \cdot e^{\frac{\lambda^2}{2} \cdot \left( \sigma_1^2 + (1-\delta)d \theta^2 \sigma_{t-1}^2 +(1-\delta)d\theta^2 \mu_{t-1}^2 \right) }
	\end{align*}
	where the last step uses Hoeffding's Lemma~\ref{lma:hoeff}. When none of the labels is $l$, we have the following bound
	\begin{align*}
		&\quad \mathbb{E}\left[ e^{\lambda M_{t}(\ell_{T_{\leq t}(v)})} | \ell(v) = l \right] \\
		&\leq e^{\frac{\lambda^2}{2} \sigma_{1}^2 } \cdot \prod_{u \in \C^{\rm u}(v)} \left[ \frac{1-\theta}{k} \cdot e^{\lambda \theta \mu_{t-1}} e^{\frac{\lambda^2 \theta^2 \sigma_{t-1}^2}{2}} + \frac{1-\theta}{k} \cdot e^{-\lambda \theta \mu_{t-1}} e^{\frac{\lambda^2 \theta^2 \sigma_{t-1}^2}{2}} +\left( \theta + \frac{(k-2)(1-\theta)}{k} \right) \cdot e^{\frac{\lambda^2 \theta^2 \sigma_{t-1}^2}{2}} \right] \\
		& \leq e^{\frac{\lambda^2}{2} \cdot \left( \sigma_1^2 + (1-\delta)d \theta^2 \sigma_{t-1}^2 \right) } \cdot e^{\frac{\lambda^2}{2}\cdot (1-\delta)d\theta^2 \mu_{t-1}^2}.
	\end{align*}
	Proof is completed.
\end{proof}

\bigskip

\begin{proof}[Proof of Theorem~\ref{thm:low.g-k}]

Borrowing the idea from Proof~\ref{pf:low.k=2}, we can study the following testing problem:
\begin{align*}
&d_{\chi^2} \left( \mu_{\ell_{T_{\leq t}(\rho)}}^{(i)},\mu_{\ell_{T_{\leq t}(\rho)}}^{(j)} \right)  \\
&= \left( 1 + \theta^2\left( \frac{1}{\theta+\frac{1-\theta}{k}} + \frac{1}{\frac{1-\theta}{k}} \right) \right)^{\delta d} \left[ 1 + d_{\chi^2} \left( \theta \mu_{\ell_{\leq t-1}(v)}^{(i)} + (1-\theta) \bar{\mu}_{\ell_{\leq t-1}(v)} , \theta \mu_{\ell_{\leq t-1}(v)}^{(j)} + (1-\theta) \bar{\mu}_{\ell_{\leq t-1}(v)} \right) \right]^{(1-\delta)d} - 1
\end{align*}

We know
\begin{align*}
	& d_{\chi^2} \left( \theta \mu_{\ell_{\leq t-1}(v)}^{(i)} + (1-\theta) \bar{\mu}_{\ell_{\leq t-1}(v)} , \theta \mu_{\ell_{\leq t-1}(v)}^{(j)} + (1-\theta) \bar{\mu}_{\ell_{\leq t-1}(v)} \right)  = \int \frac{\theta^2 (\mu_{\ell_{\leq t-1}(v)}^{(i)} - \mu_{\ell_{\leq t-1}(v)}^{(j)})^2}{\theta \mu_{\ell_{\leq t-1}(v)}^{(j)} + (1-\theta) \bar{\mu}_{\ell_{\leq t-1}(v)}} dx \\
	& \leq  \theta^2 \left[ (\theta + \frac{1 - \theta}{k}) d_{\chi^2} \left( \mu_{\ell_{\leq t-1}(v)}^{(i)},\mu_{\ell_{\leq t-1}(v)}^{(j)} \right) + \frac{1-\theta}{k} d_{\chi^2} \left( \mu_{\ell_{\leq t-1}(v)}^{(j)},\mu_{\ell_{\leq t-1}(v)}^{(i)} \right) \right. \\
	&\left. + \frac{1-\theta}{k} \sum_{l \in [k]\backslash \{i,j\}} 2 \left( d_{\chi^2} \left( \mu_{\ell_{\leq t-1}(v)}^{(i)},\mu_{\ell_{\leq t-1}(v)}^{(l)} \right) + d_{\chi^2} \left(\mu_{\ell_{\leq t-1}(v)}^{(j)},\mu_{\ell_{\leq t-1}(v)}^{(l)} \right)  \right)    \right]\\
	& \leq \theta^2 (1 + \frac{3(1-\theta)(k-2)}{k}) \cdot d_{\chi^2}^{t-1}
\end{align*}
Thus define
$$
d_{\chi^2}^{t} := \max_{i,j \in [k], i\neq j} d_{\chi^2} \left( \mu_{\ell_{T_{\leq t}(\rho)}}^{(i)},\mu_{\ell_{T_{\leq t}(\rho)}}^{(j)} \right)
$$
Then
\begin{align*}
	\log(1 + d_{\chi^2}^{t} )\leq \delta d \cdot \log \left( 1 + \theta^2\left( \frac{1}{\theta+\frac{1-\theta}{k}} + \frac{1}{\frac{1-\theta}{k}} \right) \right) + (1-\delta)d  \cdot \log \left(1 +  \theta^2  (1 + \frac{3(1-\theta)(k-2)}{k}) \cdot d_{\chi^2}^{t-1} \right)
\end{align*}

Thus if
$$
(1-\delta)\theta^2d (1 + \frac{3(1-\theta)(k-2)}{k})  < 1,
$$
denote $c^*$ as the fixed point of the equation
$$
\log(1 + c^* ) = \delta d \cdot \log \left( 1 + \theta^2\left( \frac{1}{\theta+\frac{1-\theta}{k}} + \frac{1}{\frac{1-\theta}{k}} \right) \right) + (1-\delta)d  \cdot \log \left(1 +  \theta^2  (1 + \frac{3(1-\theta)(k-2)}{k}) \cdot c^* \right).
$$
We have the following upper bounds for $c^*$ via the fact that $x - \frac{1}{2}x^2 < \log(1+x) < x$
$$
c^* - \frac{1}{2}(c^*)^2 \leq  \delta d \cdot \log \left( 1 + \theta^2\left( \frac{1}{\theta+\frac{1-\theta}{k}} + \frac{1}{\frac{1-\theta}{k}} \right) \right) + (1-\delta)\theta^2d (1 + \frac{3(1-\theta)(k-2)}{k})     \cdot c^* .
$$
The above equation implies $c^* < \frac{ 2\delta d \cdot \log \left( 1 + \theta^2\left( \frac{1}{\theta+\frac{1-\theta}{k}} + \frac{1}{\frac{1-\theta}{k}} \right) \right) }{ 1 - (1-\delta)\theta^2d (1 + \frac{3(1-\theta)(k-2)}{k}) },$
and
\begin{align*}
	\log(1 + d_{\chi^2}^{t} ) \leq \frac{ 2\delta d \cdot \log \left( 1 + \theta^2\left( \frac{1}{\theta+\frac{1-\theta}{k}} + \frac{1}{\frac{1-\theta}{k}} \right) \right) }{ 1 - (1-\delta)\theta^2d (1 + \frac{3(1-\theta)(k-2)}{k}) }.
\end{align*}

Invoke the following Lemma from \cite{tsybakov2009introduction}'s  Proposition 2.4.
\begin{lemma}[\cite{tsybakov2009introduction}, Proposition 2.4]
	\label{lma:multi-test}
	Let $P_0,P_1,\ldots, P_{k-1}$ be probability measures on $(\mathcal{X}, \mathcal{A})$ satisfying 
	$$
	\frac{1}{k-1}\sum_{i=1}^{k-1} d_{\chi^2}(P_j, P_0) \leq (k-1) \cdot \alpha_*
	$$
	then we have for any selector $\psi:\mathcal{X} \rightarrow [k]$
	$$
	\max_{i \in [k]} P_i(\psi \neq i) \geq \frac{1}{2}[1 - \alpha_* - \frac{1}{k-1}]
	$$
\end{lemma}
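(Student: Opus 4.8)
The plan is to establish a slightly stronger bound phrased in terms of the average error over all $k$ hypotheses, and then deduce the stated inequality by an elementary estimate. Assume, as we may, that $P_j \ll P_0$ for every $j$ (otherwise some $d_{\chi^2}(P_j,P_0)=\infty$ and the claim is vacuous), and write $L_j = dP_j/dP_0$, $g_j = L_j-1$, so that $\mbb{E}_{P_0} g_j = 0$, $\mbb{E}_{P_0} g_j^2 = d_{\chi^2}(P_j,P_0)$, and $g_0\equiv 0$. First I would reduce the maximum error to the average error over the $k$ measures $P_0,\dots,P_{k-1}$: for any selector $\psi$ with decision regions $A_j=\{\psi=j\}$, which partition $\mathcal{X}$,
\begin{equation*}
\max_{0\le j\le k-1} P_j(\psi\neq j) \;\ge\; \frac{1}{k}\sum_{j=0}^{k-1} P_j(\psi\neq j) \;=\; 1 - \frac{1}{k}\sum_{j=0}^{k-1} P_j(A_j).
\end{equation*}

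Next I would control the sum of correct-classification probabilities against the reference $P_0$. Since the $A_j$ are disjoint and cover $\mathcal{X}$,
\begin{equation*}
\sum_{j=0}^{k-1} P_j(A_j) = \sum_{j=0}^{k-1}\int_{A_j} L_j\, dP_0 = \int \Big( \sum_{j=0}^{k-1} L_j \mathbf{1}_{A_j} \Big)\, dP_0 \le \int \max_{0\le j\le k-1} L_j\, dP_0 .
\end{equation*}
The key step is to bound this integrated maximum by the divergences. Because $g_0=0$, the maximum $\max_{0\le j\le k-1} g_j$ is nonnegative and hence dominated by the Euclidean norm of $(g_j)_{j\ge 1}$, giving $\max_j L_j \le 1 + \big(\sum_{j=1}^{k-1} g_j^2\big)^{1/2}$; applying Jensen's inequality to the concave square root then yields
\begin{equation*}
\int \max_{0\le j\le k-1} L_j\, dP_0 \le 1 + \Big( \sum_{j=1}^{k-1} \mbb{E}_{P_0} g_j^2 \Big)^{1/2} = 1 + \Big( \sum_{j=1}^{k-1} d_{\chi^2}(P_j,P_0) \Big)^{1/2} .
\end{equation*}

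Inserting the hypothesis $\sum_{j=1}^{k-1} d_{\chi^2}(P_j,P_0) \le (k-1)^2\alpha_*$ into the two displays above gives
\begin{equation*}
\max_{0\le j\le k-1} P_j(\psi\neq j) \;\ge\; \frac{k-1}{k}\big(1-\sqrt{\alpha_*}\big),
\end{equation*}
valid for every selector $\psi$. The stated bound then follows from the elementary inequality $\frac{k-1}{k}(1-\sqrt{\alpha_*}) \ge \frac12\big(1-\alpha_*-\frac{1}{k-1}\big)$, which holds for all $\alpha_*\in[0,1]$ and $k\ge 2$: clearing denominators and minimizing the difference over $\sqrt{\alpha_*}\in[0,1]$ reduces it to a strictly positive quadratic in $\frac{1}{k-1}$.

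I expect the main obstacle to be the key step bounding $\int \max_j L_j\, dP_0$. The naive route of estimating each discrepancy $P_j(A_j)-P_0(A_j)$ separately (e.g.\ through total variation or a per-pair Cauchy--Schwarz) loses the averaging and produces a dependence on the individual, rather than the averaged, $\chi^2$ divergence, which is too weak here since each $d_{\chi^2}(P_j,P_0)$ is of order $k\alpha_*$. Handling \emph{all} $k$ hypotheses simultaneously through the single Euclidean-norm-plus-Jensen estimate is what delivers the correct dependence on the averaged quantity $\alpha_*$; the only secondary point is the degenerate case $P_j\not\ll P_0$, dispatched by observing that the divergences are then infinite and the bound is trivial.
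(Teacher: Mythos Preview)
Your proof is correct. The paper does not actually prove this lemma: it simply quotes it from Tsybakov's book and invokes it as a black box in the proof of Theorem~\ref{thm:low.g-k}. The argument you give is essentially Tsybakov's own proof: reduce the minimax error to the average error, bound $\sum_j P_j(A_j)\le \int \max_j L_j\,dP_0$, and control the integrated maximum via $\max_j (L_j-1)\le \big(\sum_{j\ge 1}(L_j-1)^2\big)^{1/2}$ together with Jensen. The intermediate inequality you obtain, $\max_j P_j(\psi\neq j)\ge \tfrac{k-1}{k}\big(1-\sqrt{\alpha_*}\big)$, is precisely Tsybakov's Proposition~2.3, and the displayed lemma (his Proposition~2.4) is indeed a weakening of it. Your closing elementary inequality is valid for all $\alpha_*\ge 0$ and $k\ge 2$: after clearing denominators one gets the quadratic $m(m+1)x^2-2m^2x+m^2+1$ in $x=\sqrt{\alpha_*}$ with $m=k-1$, whose discriminant $-4m(m^2+m+1)$ is negative, so the expression is strictly positive everywhere. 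The phrasing ``a strictly positive quadratic in $\tfrac{1}{k-1}$'' is slightly off --- it is a quadratic in $\sqrt{\alpha_*}$ with no real roots --- but the conclusion stands.
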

Since we have $\frac{1}{k-1} \sum_{i\in [k]\backslash j } d_{\chi^2}  \left( \mu_{\ell_{T_{\leq t}(\rho)}}^{(i)},\mu_{\ell_{T_{\leq t}(\rho)}}^{(j)} \right) \leq \alpha \cdot (k-1) $, 
we apply Lemma~\ref{lma:multi-test} and obtain
$$
\inf_{\Phi} \sup_{l \in [k]} \mathbb{P}\left( \Phi \neq l \right) \geq \frac{1}{2} \left( 1 - \alpha - \frac{1}{k-1} \right),
$$
where $\alpha = \frac{\delta}{1-\delta}  \cdot \frac{ {\sf SNR} }{ 1 - 4 \cdot {\sf SNR} } \cdot \frac{2(p+q)(q+p/(k-1))}{pq}$.

\end{proof}

\bigskip
\begin{proof}[Proof of Theorem~\ref{thm:sbm.g-k}]
	The proof of Theorem~\ref{thm:sbm.g-k} is the same idea as the proof of Theorem~\ref{thm:sbm.k=2}, with Theorem~\ref{thm:k-AMP} and Theorem~\ref{thm:low.g-k} as the case for regular tree. For simplicity, we will not prove it again.
\end{proof}

\section*{Acknowledgements}
The authors want to thank Elchanan Mossel for many valuable discussions.

\bibliographystyle{plainnat} 
\bibliography{bibfile}

\end{document}